\documentclass[11pt]{amsart}
\usepackage[a4paper,margin=1.1in]{geometry}
\usepackage[T1]{fontenc}
\usepackage{amsmath,amssymb,amsthm,mathtools}
\usepackage[hidelinks]{hyperref}
\hypersetup{pdftitle={The sixth moment of a degree two L-function of arbitrary level},pdfauthor={Andrew Pearce-Crump}}
\usepackage{microtype}
\allowdisplaybreaks
\theoremstyle{plain}
\newtheorem{theorem}{Theorem}[section]
\newtheorem{proposition}[theorem]{Proposition}
\newtheorem{lemma}[theorem]{Lemma}
\newtheorem{corollary}[theorem]{Corollary}
\theoremstyle{remark}
\newtheorem{remark}[theorem]{Remark}
\newcommand{\Real}{\operatorname{Re}}
\newcommand{\Imag}{\operatorname{Im}}
\newcommand{\half}{\tfrac12}
\newcommand{\R}{\mathbb R}
\newcommand{\Z}{\mathbb Z}
\newcommand{\Q}{\mathbb Q}
\newcommand{\dd}{\,d}
\newcommand{\GC}{\Gamma_{\mathbb C}}
\newcommand{\Lc}{\mathcal L}
\newcommand{\Kc}{D}
\newcommand{\Jc}{\mathcal J}
\newcommand{\Fc}{\mathcal F}
\newcommand{\Bc}{\mathcal B}
\newcommand{\e}{\operatorname{e}}

\title[The sixth moment of a degree two $L$-function]{The sixth moment of a degree two $L$-function of arbitrary level}
\author{Andrew Pearce-Crump}
\address{School of Mathematics, University of Bristol, Fry Building, Woodland Road, Bristol, BS8 1UG, United Kingdom}
\email{andrew.pearce-crump@bristol.ac.uk}
\date{}

\begin{document}

\begin{abstract}
Let $f$ be a primitive holomorphic cusp form of arbitrary weight, level and nebentypus. We prove
$\int_0^T|L(\half+it,f)|^6\dd t\ll_{f,\epsilon}T^{2+\epsilon}$,
extending Jutila's level-one theorem. The new ingredient is a large sieve comparing the stationary phases in the Booker--Milinovich--Ng transformation at different heights. Applications include moments of order $T^{1+\epsilon}$ on every line to the right of the critical line, $\Omega(T^{4/35-\epsilon})$ simple zeros, zero density estimates, and bounds for cubic-field power sums, shifted convolution sums and the general divisor problem.
\end{abstract}

\maketitle

\section{Introduction and main results}

Let $f\in S_k(\Gamma_0(N),\xi)$ be a primitive holomorphic cusp form of weight $k$, level $N$ and nebentypus $\xi$, with Fourier coefficients $\lambda_f(n)$ normalised so that Deligne's bound reads $|\lambda_f(n)|\le d(n)$. With this normalisation $L(s,f)=\sum_{n\ge1}\lambda_f(n)n^{-s}$ has its critical line at $\Real s=\half$.

This paper concerns the sixth moment of $L(s,f)$ on the critical line. Our main result bounds it at every level.

\begin{theorem}\label{thm:main}
For every primitive holomorphic cusp form $f$ of arbitrary weight, level and nebentypus, and every $\epsilon>0$,
\[
  \int_0^T\bigl|L(\half+it,f)\bigr|^6\dd t\ \ll_{f,\epsilon}\ T^{2+\epsilon} .
\]
\end{theorem}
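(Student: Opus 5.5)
We follow Jutila's level-one argument. The sixth moment is reduced to a large-values estimate for short Dirichlet polynomials built from $\lambda_f$. The only level-dependent step is the transformation of exponential sums twisted by $\lambda_f(n)$; here we use the Booker--Milinovich--Ng transformation, which holds for arbitrary level and nebentypus, and we supply the large sieve estimate for its stationary phases that the abstract refers to.

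First, cut $[0,T]$ into dyadic blocks $[T,2T]$. By the approximate functional equation for $L(s,f)$, of analytic conductor $\asymp_f t^2$, we have
\[
L(\half+it,f)\ll \sum_{M\le T^{1+\epsilon}\ \text{dyadic}} M^{-1/2}\Bigl|\sum_{n\sim M}\lambda_f(n)n^{-it}w(n/M)\Bigr|+T^{-A}.
\]
A well-spaced set of points $t_r$ with $|L(\half+it_r,f)|\ge V$ then gives large values of one of these polynomials. The target is the Jutila-type bound
\[
R\ll_{f,\epsilon} T^{2+\epsilon}V^{-6}.
\]
For small $V$ this comes from the mean value theorem for Dirichlet polynomials together with the second moment $\int_T^{2T}|L|^2\ll T^{1+\epsilon}$. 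For large $V$ we use Jutila's method: pass to the polynomial of length $M$, and apply Heath-Brown's trick or a Hal\'asz-type duality to reduce to a bound for
\[
\sum_{r\ne r'}\Bigl|\sum_n \lambda_f(n)n^{-i(t_r-t_{r'})}\Bigr|,
\]
or more precisely for the corresponding short sums.

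Second, we treat these exponential sums, $\sum_{n\sim M}\lambda_f(n)e(g(n))$ with $g(x)=-(t/2\pi)\log x$. After splitting into short intervals and rational approximations $h/\ell$, we apply the Booker--Milinovich--Ng Voronoi-type transformation. This converts the sum into dual sums over $\lambda_f(m)$ twisted by Kloosterman-type characters modulo $\ell$, possibly attached to Atkin--Lehner partners of $f$ when $(\ell,N)>1$. The stationary-phase analysis gives an expression of the form
\[
\sum_m \lambda_{f'}(m)\,m^{-1/4}\,e\bigl(\phi(m,t)\bigr),
\]
where the stationary phase $\phi(m,t)$ behaves like $2\sqrt{mt/(2\pi)}$ up to level factors. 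Squaring and summing over the points $t_r$ then needs a large sieve for the phases $\phi(\cdot,t_r)$ taken at different heights. We would prove it by expanding the square, so that the dual $m$-sum becomes a sum of $e\bigl(\phi(m,t_r)-\phi(m,t_{r'})\bigr)$, and using a first-derivative or Kusmin--Landau bound in $m$. The derivative is $\asymp |\sqrt{t_r}-\sqrt{t_{r'}}|\,m^{-1/2}$, which gives a Montgomery--Vaughan type estimate: the diagonal term plus $\sum_{r\ne r'}|t_r-t_{r'}|^{-1}$-type off-diagonal terms.

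Third, we insert this into Jutila's counting and optimise the parameters (the interval length, the $\ell$-range of the approximations and $M$). This recovers $R\ll T^{2+\epsilon}V^{-6}$, and dyadic summation over $V$ gives the theorem.

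The main obstacle is the second step. At level $N$ the transformation has different shapes for different $\ell$: the cusp changes with $\gcd(\ell,N)$ and the dual form changes. The stationary phases are therefore only uniform after we fix the residue class of $\ell$ modulo $N$ and the Atkin--Lehner datum, and the large sieve must be uniform in these. We would first partition the $\ell$'s by $(\ell,N)$, prove the large sieve separately for each of the finitely many classes with constants depending on $f$, and check that the error terms from the Booker--Milinovich--Ng transformation are acceptable at Jutila's choice of parameters.
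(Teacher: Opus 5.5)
Your overall plan is the paper's: count large values with target $R\ll T^{2+\epsilon}V^{-6}$, use the mean square for $V\le T^{1/4}$, and for large $V$ apply the Booker--Milinovich--Ng transform and then a large sieve comparing stationary phases at different heights. But the large sieve you describe is not the one that is needed, and as stated it fails.

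After the transformation, the polynomial at height $t$ is not a single dual sum. It is $\sum_j S_j(t)$, a sum over all Farey fractions $\alpha_j=-u_j/v_j$ of the block, each with its own phase $g^\pm_{jr}(\ell;t)$ including the additive twist $b_j\ell/q_j$. Two kinds of cross term must be controlled, and your argument addresses neither.

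\emph{Different fractions at nearby heights.} The obvious route, Cauchy--Schwarz over $j$ at the $1$-spaced heights, costs a factor $\#\mathcal J\asymp Z$. In the worst case $D\asymp K$ this returns only $RV^2\ll T^{1+O(\delta)}$. The paper instead does the following:
\begin{itemize}
\item it sorts the fractions into classes satisfying (S0)--(S2);
\item it groups the heights into intervals of length $Z=\max(D^2T/4M,1)$ and freezes the partition, so that the sums are holomorphic in $t$;
\item it applies Gallagher's inequality and removes the cross terms by integrating by parts in $t$, using $\partial_t(g^\pm_{ir}-g^\pm_{jr})\approx\frac1{2\pi}\log(\alpha_i/\alpha_j)$ (Lemma~\ref{lem:offdiag}).
\end{itemize}
This is what leaves only $Z$-spaced heights in the mean square.

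\emph{The large sieve itself.} It must run over pairs (height, fraction), through Bombieri's form of Hal\'asz's inequality and the exact derivatives of Lemma~\ref{lem:deriv}. For $i\ne j$ the distinctness of the $u_jv_j$ keeps $|\Phi''|$ large, except for one exceptional fraction per height. That fraction needs the lower bound for $\|z_{ij}\|$ in Lemma~\ref{lem:sep}.

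Even for a single fraction, your Kusmin--Landau step is valid only while $|t_p-t_{p'}|\ll DM^{-1/2}L^{1/2}T$. Beyond that, $\Phi'$ exceeds $\frac12$, varies only by a bounded factor on $[L,2L]$, and passes through integers. The second derivative test then gives $\Delta\ll M^{1/2}$, with no decay in $|t_p-t_{p'}|$. These terms, together with the different-fraction pairs, produce the $KM^{-1/2}PT$ term of Proposition~\ref{prop:sieve}. That term becomes $RT^{1/2}Q^{-1/4}$ in $RV\ll T^{O(\delta)}(R^{1/2}Q^{1/2}+RT^{1/2}Q^{-1/4})$. So a bound of the shape ``diagonal plus $\sum|t_r-t_{r'}|^{-1}$'' does not hold here, and the exponent $6$ comes precisely from the terms it leaves out.

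Two further points.

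First, your Step~1 duality is misstated. Hal\'asz's inequality applied to $\sum\lambda_f(n)n^{-1/2-it}$ produces the zeta-sums $\sum_n n^{-i(t_r-t_{r'})}$, not $\lambda_f$-twisted sums. It leads to Huxley's estimate, which for length $\asymp T$ gives only $R\ll T^{1+\epsilon}V^{-2}$ when $V\ge T^{1/4}$. In the paper, Huxley's theorem (in the variable-endpoint form of Lemma~\ref{lem:maximal}) is applied only to the initial segment $n\le T^\delta Q$. That segment cannot be discarded trivially as in the Booker--Milinovich--Ng reduction (Remark~\ref{rem:32}). It reaches the target only because the dual length $Q=(TR)^{2/3}$, and with it the Farey order $K=(M/Q)^{1/2}$, depend on $R$.

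Second, this $Q$ may be almost as large as $T$. That is outside the range $Q\ll t^{2/3}$ in which Booker, Milinovich and Ng prove their transformation. So checking the error terms (Proposition~\ref{prop:book}) is not enough: you must re-verify the transformation itself up to $Q\le t^{1-\epsilon}$. This needs smoothing order $s>4/\epsilon$, partition endpoints at mediants for the boundary pieces, and an amplitude holomorphic in $t$ (Appendix~\ref{sec:evidence}, Remark~\ref{rem:boundary}).
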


The sixth moment bound gives the Weyl-type bound $|L(\half+it,f)|\ll|t|^{1/3+\epsilon}$ through the usual local mean inequality, and is stronger on average than the pointwise bound alone. The Lindel\"of hypothesis would give the smaller sixth moment $T^{1+\epsilon}$. H\"older's inequality against the mean square
\[
  \int_0^T|L(\half+it,f)|^2\dd t\ll_fT(\log T)^3,
\]
which follows from the argument of Theorem \ref{thm:second} (see also M\"uller \cite{Muller} for automorphic forms on general Fuchsian groups), gives the fourth moment as well.

\begin{corollary}\label{cor:fourth}
For every primitive holomorphic cusp form $f$ of arbitrary weight, level and nebentypus, and every $\epsilon>0$,
\[
  \int_0^T\bigl|L(\half+it,f)\bigr|^4\dd t\ \ll_{f,\epsilon}\ T^{3/2+\epsilon} .
\]
\end{corollary}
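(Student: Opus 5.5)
The plan is to interpolate the fourth moment between the second and sixth moments by the Cauchy--Schwarz (H\"older) inequality, taking Theorem~\ref{thm:main} and the mean square bound $\int_0^T|L(\half+it,f)|^2\dd t\ll_f T(\log T)^3$ quoted above as given.

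Write $g(t)=|L(\half+it,f)|$ and split $g^4=g\cdot g^3$. Cauchy--Schwarz on $[0,T]$ gives
\[
  \int_0^T g(t)^4\dd t\le\Bigl(\int_0^T g(t)^2\dd t\Bigr)^{1/2}\Bigl(\int_0^T g(t)^6\dd t\Bigr)^{1/2}.
\]
Equivalently, this is H\"older with exponents $p=q=2$, or log-convexity of $\alpha\mapsto\int g^{\alpha}$ at $4=\tfrac12\cdot2+\tfrac12\cdot6$. Inserting the mean square bound and Theorem~\ref{thm:main} bounds the right-hand side by
\[
  \ll_{f,\epsilon}\bigl(T(\log T)^3\bigr)^{1/2}\bigl(T^{2+\epsilon}\bigr)^{1/2}=T^{3/2+\epsilon/2}(\log T)^{3/2}.
\]
Absorbing the logarithm into $T^{\epsilon/2}$ gives the bound $T^{3/2+\epsilon}$ claimed in Corollary~\ref{cor:fourth}. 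For bounded $T\ge1$ the integral is $O_f(1)$, since $L(s,f)$ is entire, so the estimate holds for all $T\ge1$.

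There is no genuine obstacle here. All the depth lies in Theorem~\ref{thm:main}, with a minor dependence on the mean square estimate, which the paper attributes to the argument of Theorem~\ref{thm:second} and to M\"uller. The only point to check is that the mean square bound holds uniformly at arbitrary level with an implied constant depending only on $f$, and that is exactly how it is stated.
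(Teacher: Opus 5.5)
Your proposal is correct and is the paper's argument: it deduces Corollary~\ref{cor:fourth} by applying Cauchy's inequality to $|L|\cdot|L|^3$, using the mean square $\int_T^{2T}|L(\half+it,f)|^2\dd t\ll_f T(\log T)^3$ and Theorem~\ref{thm:main}. Summing the paper's dyadic mean-square bound over ranges, and bounding the integral on $[0,1]$ trivially, gives the form on $[0,T]$ that you use.
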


\subsection{Background}\label{sec:background}
For level one, Theorem \ref{thm:main} is Jutila's Theorem 4.7 \cite{Jutila}. The corresponding pointwise Weyl bound was proved by Good \cite{Good} and then by Jutila using Voronoi summation and Farey fractions.

Booker, Milinovich and Ng \cite{BMN} extended the pointwise argument to arbitrary level; Aggarwal \cite{Aggarwal} gave a second proof. Their transformation formula works at every rational point, including denominators sharing factors with a non-squarefree level.

The pointwise bound and the mean square give only a sixth moment of order $T^{7/3+\epsilon}$, since
\[
  \int_0^T\bigl|L(\half+it,f)\bigr|^6\dd t\ \le\ \max_{0\le t\le T}\bigl|L(\half+it,f)\bigr|^4\int_0^T\bigl|L(\half+it,f)\bigr|^2\dd t\ \ll\ T^{4/3+\epsilon}\cdot T^{1+\epsilon}.
\]
Theorem \ref{thm:main} requires the mean-value argument as well. At level greater than one this means comparing the transformed sums of \cite{BMN} at different heights. This comparison is the new part of the proof, and Section~\ref{sec:outline} describes it.

A special case was already known. Sankaranarayanan \cite[(5.6)]{Sankaranarayanan} proved a sixth moment of order $T^2(\log T)^{24}$ for ideal-class zeta functions of imaginary quadratic fields, and hence for the weight-one dihedral forms obtained from class-group characters. 

Many natural families consist of forms of level greater than one, to which Jutila's theorem does not apply. They include the newforms attached to elliptic curves over $\Q$, which have weight two and level equal to the conductor \cite{BCDT}; the CM forms, whose $L$-functions are the Hecke $L$-functions of Gr\"ossencharacters of imaginary quadratic fields, and for which Weyl-type pointwise bounds go back to S\"ohne \cite{Sohne}; the weight-one newforms attached to odd irreducible two-dimensional Artin representations \cite{KW1,KW2,Kisin}; and the twists $f\otimes\chi$ of any form by Dirichlet characters. Every result of this paper that is stated for an arbitrary primitive form applies to all of these families.

 For example, for an elliptic curve $E$ over $\Q$, with $L(E,s)$ normalised to have its critical line at $\Real s=1$, Theorem \ref{thm:main} gives
\[
  \int_0^T\bigl|L(E,1+it)\bigr|^6\dd t\ll_{E,\epsilon}T^{2+\epsilon}.
\]

\subsection{Applications}
\label{sec:applications}

The following six applications are proved in Sections~\ref{sec:lines} to~\ref{sec:divisor}, in the order in which they are stated.

\subsubsection{Moments to the right of the critical line.}
For $\half\le\sigma<1$ put
\begin{equation}\label{eq:msigma}
  m(\sigma)=\begin{cases}\dfrac{2}{3-4\sigma}&\text{if }\half\le\sigma\le\frac58,\\[10pt]\dfrac{3}{2-2\sigma}&\text{if }\frac58\le\sigma<1.\end{cases}
\end{equation}
The two expressions agree at $\sigma=\frac58$, where $m(\sigma)=4$.

\begin{theorem}\label{thm:lines}
Let $f$ be a primitive holomorphic cusp form of arbitrary weight, level and nebentypus.  Then for $\half\le\sigma<1$ and every $\epsilon>0$
\begin{equation}\label{eq:curve}
  \int_T^{2T}\bigl|L(\sigma+it,f)\bigr|^{m(\sigma)}\dd t\ \ll_{f,\sigma,\epsilon}\ T^{1+\epsilon}.
\end{equation}
In particular, for $\frac58\le\sigma<1$,
\begin{equation}\label{eq:lines}
  \int_T^{2T}\bigl|L(\sigma+it,f)\bigr|^4\dd t\ \ll_{f,\sigma,\epsilon}\ T^{1+\epsilon} .
\end{equation}
\end{theorem}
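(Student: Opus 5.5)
The plan is to interpolate between three inputs using convexity of mean values (Gabriel's theorem, or the Hardy--Ingham--Pólya convexity of $\log\int|L|^{1/\alpha}$ in suitable form). The inputs are:
\begin{itemize}
\item the sixth-moment bound of Theorem~\ref{thm:main} on $\Real s=\half$;
\item the second-moment bound $\int_T^{2T}|L(\half+it,f)|^2\dd t\ll T^{1+\epsilon}$;
\item the trivial bound on a line $\Real s=1+\delta$, where $L(s,f)$ is bounded by absolute convergence.
\end{itemize}
Since $L(s,f)$ is entire (for primitive $f$), Gabriel's convexity theorem applies in the strip. For a strip $\alpha\le\Real s\le\beta$ and a point $\sigma=(1-\lambda)\alpha+\lambda\beta$, it gives
\[
\int|L(\sigma+it)|^{r}\dd t\ll\Bigl(\int|L(\alpha+it)|^{p}\dd t\Bigr)^{(1-\lambda)r/p}\Bigl(\int|L(\beta+it)|^{q}\dd t\Bigr)^{\lambda r/q},
\]
with $1/r=(1-\lambda)/p+\lambda/q$. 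This holds after inserting a smooth weight localizing $t$ to $[T/2,4T]$; the weight can be taken as $e^{(s-i\tau)^2/T^2}$-type, or one multiplies by $e^{s^2/\Delta}$ and accepts $T^\epsilon$ losses.

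\emph{Range $\frac58\le\sigma<1$.} Here interpolate between $\alpha=\half$ with the sixth moment (over a dyadic range, $\ll T^{2+\epsilon}$) and $\beta=1+\epsilon$ with $q=\infty$. Choosing $\lambda$ with $\sigma=\half+\lambda(\half+\epsilon)$ gives $1/r=(1-\lambda)/6$, so $r=6/(1-\lambda)=6/(3-2\sigma)\cdot\ldots$. Computing: $1-\lambda=2-2\sigma$ (up to $\epsilon$), so $r=3/(1-\sigma)$. The bound is $(T^{2})^{r(1-\lambda)/6}=T^{2}$, which is too weak.

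So the better route uses a \emph{weighted} combination. Convert the sixth moment to a statement with $T$-power $1$ by pairing it with pointwise bounds. Concretely, I aim for the two "endpoint" estimates
\begin{itemize}
\item $\int_T^{2T}|L(\half+it)|^{2}\ll T^{1+\epsilon}$ (giving $m(\half)=2$);
\item at $\sigma=\frac58$, $\int_T^{2T}|L|^4\ll T^{1+\epsilon}$.
\end{itemize}
For the second, interpolate between $\Real s=\half$ with the sixth moment ($T^2$) and $\Real s=\frac34$. On $\Real s=\frac34$, a mean-value estimate for Dirichlet polynomials of length $T^{1+\epsilon}$ (approximate functional equation) gives $\int|L(\frac34+it)|^2\ll T^{1+\epsilon}$. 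Better, use the line $\Real s=1+\epsilon$ with $|L|\ll1$ together with $\sigma=\frac34$.

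Checking the exponents: taking $\alpha=\half$, $p=6$ (mass $T^2$) and $\beta=1+\epsilon$, $q=\infty$, we need $r(1-\lambda)/6\cdot 2=1$, i.e.\ $r(1-\lambda)=3$. Combined with $r=6/(1-\lambda)$, this gives a contradiction. So plain Gabriel on these endpoints fails, and the key is instead the standard large-values/zero-density style argument (as in Ivić, Theorem 8.4 / Heath-Brown's treatment of $m(\sigma)$ for $\zeta$).
\begin{enumerate}
\item Reduce, via the approximate functional equation and a dyadic split, to bounding the number $R$ of well-spaced points $t_r\in[T,2T]$ with $|L(\sigma+it_r)|\ge V$.
\item Bound $R$ in two ways. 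First, from the sixth moment via a local mean (subharmonicity), $R\ll T^{2+\epsilon}V^{-6}$ on the half-line; shift this to $\sigma$ by Hadamard three-lines applied to short integrals. Second, via the mean-value theorem for Dirichlet polynomials, $R\ll T^{\epsilon}(TV^{-2}+T\,V^{-f(\sigma)})$ together with the Halász--Montgomery inequality.
\item Optimize to get $\int|L|^{m}\ll T^{1+\epsilon}$. Summing $V^{m}R$ over dyadic $V$ gives $T^{1+\epsilon}$ exactly when $m(\sigma)$ is as in \eqref{eq:msigma}. The two regimes correspond to which large-value bound dominates, and the breakpoint $\sigma=\frac58$ is where both give exponent $4$.
\end{enumerate}

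\emph{Range $\half\le\sigma\le\frac58$.} Convexity between $(\half,2)$ and $(\frac58,4)$ in the $(\sigma,1/m)$ variables gives exactly $1/m=(3-4\sigma)/2$, which is linear in $\sigma$. So this range follows from Gabriel's theorem with $T$-exponent $1$ at both ends.

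\emph{Range $\frac58\le\sigma<1$.} Note $1/m=(2-2\sigma)/3$ is also linear, vanishing at $\sigma=1$. So it suffices to prove the $\sigma=\frac58$ fourth-moment estimate \eqref{eq:lines} and interpolate with the line $1+\epsilon$ (where the $\infty$-moment is $O(1)$). The whole theorem therefore reduces to
\[
\int_T^{2T}\Bigl|L\bigl(\tfrac58+it,f\bigr)\Bigr|^4\dd t\ll T^{1+\epsilon}.
\]
This is the main obstacle. My plan is:
\begin{itemize}
\item write $L^2(\frac58+it)$ via the approximate functional equation as Dirichlet polynomials of length $\le T^{2}$ with coefficients $\lambda_f*\lambda_f\ll d_4$;
\item for lengths $\le T$, apply the mean-value theorem directly, which gives $\ll T\sum_{n\le N} d_4(n)^2n^{-5/4}\ll T$;
\item for the longer pieces, use the functional equation to reflect to $\Real s=\frac38$ with factor $T^{-1/2}$ per $L$ squared, and bound via Hölder against the sixth moment at $\half$ combined with the second moment.
\end{itemize}
Equivalently, apply the large-values argument above with exponent $4$ at $\sigma=\frac58$, where the sixth-moment count $R\ll T^{2+\epsilon}V^{-6}$ exactly balances the mean-value count. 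I expect the bookkeeping of the reflected long pieces to be the delicate point.
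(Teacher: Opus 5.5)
\paragraph{Assessment.} Your reduction is sound, and it is a genuine simplification of the paper. However, the proposal has a gap at the one estimate everything reduces to, the fourth moment \eqref{eq:lines} at $\sigma=\frac58$: neither of your routes to it works, and the idea that makes it work is missing.

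\paragraph{The reduction, and why your routes to $\sigma=\frac58$ fail.} The quantity $1/m(\sigma)$ is linear on $[\half,\frac58]$ and on $[\frac58,1]$, and vanishes at $\sigma=1$. Apply Gabriel's theorem to $L(s,f)e^{(s-iT_0)^2/T^2}$, using the mean square on $\Real s=\half$ and boundedness on $\Real s=1+\epsilon$, and absorb the $O(\epsilon)$ change of exponent with the Weyl-type pointwise bound. This derives all of \eqref{eq:curve} from the single case $\sigma=\frac58$. The paper instead reruns the large-value argument at every $\sigma$. The trouble is the case $\sigma=\frac58$ itself.
\begin{itemize}
\item The AFE route fails. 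The mean-value theorem on an AFE for $L^2$ of length $\asymp T^2$ gives $T^{3/2+\epsilon}$. However you arrange the reflection, the critical line then enters only through H\"older between the second and sixth moments, that is through $\int|L(\half+it,f)|^4\ll T^{3/2+\epsilon}$. That information is insufficient. By convexity it gives $T^{11/8+\epsilon}$. With a smoothed split at a fixed length $Y$ it gives $T+Y^{3/2}+Y^{-1/2}T^{3/2}\gg T^{9/8}$.
\item The three-lines transfer of the count $R\ll T^{2+\epsilon}V^{-6}$ also fails. Local convexity between $\Real s=\half$ and $\Real s=1+\epsilon$ only forces $|L(\half+it',f)|\gg V^{4/3}T^{-\epsilon}$ near $t_\nu$. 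Hence $R\ll T^{2+\epsilon}V^{-8}$, which is below the target $TV^{-4}$ only when $V\ge T^{1/4}$, the very top of the Weyl range.
\end{itemize}

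\paragraph{The missing idea.} What is needed is the dichotomy of Lemma~\ref{lem:dichotomy}, with a smoothing length depending on the size $V$ of the value. This is what lets the sixth moment act level set by level set instead of through a global moment. All bounds below are up to $T^{O(\delta)}$.
\begin{itemize}
\item \emph{The dichotomy.} Suppose $|L(\frac58+it_\nu,f)|\ge V$. Mellin inversion and a shift to $\Real w=-\frac18$ give two cases. Either some dyadic block $n\sim M\le Y\Lc^2$ of $\sum_n\lambda_f(n)e^{-n/Y}n^{-5/8-it_\nu}$ is $\ge V\Lc^{-2}$. Or $|L(\half+it',f)|\gg VY^{1/8}\Lc^{-2}$ for some $|t'-t_\nu|\le\Lc^2$. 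Take $Y=T^{4/3}V^{-8/3}$.
\item \emph{Critical-line case.} Here $W\asymp T^{1/6}V^{2/3}$. When $W\ge T^{1/4+\delta}$, the large-value form \eqref{eq:target} of the sixth moment gives exactly $T^2W^{-6}=TV^{-4}$. Otherwise $V\lesssim T^{1/8}$, and the mean-square count $TW^{-2}$ already suffices.
\item \emph{Polynomial case.} The mean-value theorem gives $(T+M)M^{-1/4}V^{-2}$, which is too large when $M\le T$ and $M<V^8$. This can cover the whole range $M\le Y$, for instance when $V=T^{1/5}$. One needs Huxley's theorem, which gives $M^{3/4}V^{-2}+TM^{1/4}V^{-6}$. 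Its first term is at most $Y^{3/4}V^{-2}=TV^{-4}$. The minimum of its second term with $TM^{-1/4}V^{-2}$ is at most their geometric mean, $TV^{-4}$.
\end{itemize}
Your step~2 mentions Hal\'asz--Montgomery with an unspecified exponent $f(\sigma)$. Without the $V$-dependent length $Y$ and Huxley's bound, the exponents do not close at $\sigma=\frac58$.
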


For $N=1$ and $\half\le\sigma\le\frac58$ this is Theorem~1 of Ivi\'c \cite{Ivic92}, which rests on Jutila's sixth moment.  It is the form in which the sixth moment is used in the arithmetic applications below. The case $\sigma=\frac58$, where $m(\sigma)=4$, is used also in \cite{LuWang19} and \cite{Feng25}. For $\frac58\le\sigma<1$ the exponent satisfies $\frac23(1-\sigma)\,m(\sigma)=1$. On these lines it is therefore limited by the Weyl-type bound of \cite{BMN}, and a larger exponent would need a pointwise bound below the Weyl exponent.

The proof uses large-value estimates for the Dirichlet polynomials in the
sixth-moment argument; interpolation of Corollary \ref{cor:fourth} alone
gives only $T^{11/8+\epsilon}$ at $\sigma=5/8$.

\subsubsection{Simple zeros.}
Let $N^s_f(T)$ be the number of simple zeros of $\Lambda(s,f)$ with imaginary part in $[-T,T]$, where $\Lambda(s,f)$ is the completed function defined in Section~\ref{sec:afe}, and let $\theta_f$ be the supremum of the real parts of the simple zeros of $\Lambda(s,f)$ and $\Lambda(s,\bar f)$, so that $\half\le\theta_f\le1$ whenever there are any. At arbitrary level de Faveri \cite{deFaveri} proved
\[
  N_f^s(T)=\Omega\bigl(T^{2/27-\epsilon}\bigr),
\]
and we obtain the following improvement.

\begin{corollary}\label{cor:zeros}
For every primitive $f$ of arbitrary weight, level and nebentypus, and every $\epsilon>0$,
\[
  N^s_f(T)=\Omega\bigl(T^{E(\theta_f)-\epsilon}\bigr),\qquad
  E(\theta)=\begin{cases}\dfrac{2(1-\theta)}{7-4\theta}&\text{if }\half\le\theta\le\frac79,\\[10pt]\dfrac23\theta-\dfrac16&\text{if }\frac79<\theta\le1.\end{cases}
\]%
In particular, $N^s_f(T)=\Omega(T^{4/35-\epsilon})$ unconditionally. If the simple zeros of $\Lambda(s,f)$ and $\Lambda(s,\bar f)$ lie on the critical line, then $N^s_f(T)=\Omega(T^{1/5-\epsilon})$.
\end{corollary}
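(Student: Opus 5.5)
The plan is to follow the Conrey--Ghosh/Booker/de Faveri strategy. Simple zeros are detected by a meromorphic function whose poles are exactly the simple zeros of $\Lambda(s,f)$. Assuming few such zeros, a smoothed Dirichlet-coefficient sum would have to be small, and Theorem~\ref{thm:lines} together with Theorem~\ref{thm:main} shows it cannot be.

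\textbf{Setting up the auxiliary function.} Concretely, I would work with
\[
H(s)=\frac{\Lambda(s,f)\Lambda''(s,f)-\Lambda'(s,f)^2}{\Lambda(s,f)^2}+\frac{\Lambda'(s,f)^2}{\Lambda(s,f)\Lambda'(s,f)}\cdot(\ldots).
\]
The version I will actually try first is de Faveri's: twist by a character or shift, so that the multiple zeros cancel in a combination like $\Lambda(s)\,\overline{\Lambda}(1-s)$ relations coming from the functional equation, and only simple zeros survive as poles with residues of the form $1/\Lambda'(\rho)$-type weights. Integrating $H(s)X^s w(s)$ over a vertical line $\Real s=c>1$ and shifting left to $\Real s=\sigma$ yields an identity of the shape
\[
\sum_n b(n)\,\phi(n/X)=\sum_{\rho\ \mathrm{simple},\,|\gamma|\le T}\mathrm{Res}_{\rho}+\frac1{2\pi i}\int_{(\sigma)}H(s)X^s w(s)\,ds+\text{(tail)},
\]
where $b(n)$ are explicit coefficients built from $\lambda_f$. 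The left side is bounded below by $\gg X^{1-\epsilon}$ along a sequence of $X$. At this point I would invoke an $\Omega$-result for the coefficients, as in de Faveri, for instance using nonvanishing of $L(s,f\otimes\bar f)$ at $s=1$.

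\textbf{Bounding the right-hand side.} Two contributions need estimates.
\begin{itemize}
\item The residues are bounded by $X^{\theta_f}$ times the number of simple zeros times a pointwise size. Here the pointwise size is where the Weyl bound $|t|^{1/3+\epsilon}$ from \cite{BMN}, or its average form, enters.
\item The shifted integral is controlled by H\"older's inequality. I would use the $m(\sigma)$-th moment bound \eqref{eq:curve} on $\Real s=\sigma$ for the factors of $L$, together with a lower bound for $|\Lambda|$ away from zeros supplied by the usual Jensen/convexity argument.
\end{itemize}
Assuming $N_f^s(T)\ll T^{E-\epsilon}$ and choosing $X=T^{\alpha}$ with $\alpha$ optimised, the right side becomes $o(X)$, which is a contradiction.

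\textbf{Optimisation, in two regimes.} The exponents come from balancing $X^{\theta_f}T^{E}$ against the truncation error and the line integral.
\begin{itemize}
\item For $\theta_f\le 7/9$, the binding constraint is the moment bound on the line $\sigma$ near $\theta_f$. Using $m(\sigma)=2/(3-4\sigma)$ or $3/(2-2\sigma)$ there, the computation should produce $E=2(1-\theta)/(7-4\theta)$.
\item For $\theta_f>7/9$, the Weyl-limited regime gives the linear law $\tfrac23\theta-\tfrac16$.
\end{itemize}
Minimising $E$ over $\theta\in[\half,1]$ gives the unconditional $4/35$, attained at $\theta=7/9$; the value $\theta=\half$ gives $1/5$.

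\textbf{Main obstacle.} I expect the hardest step to be constructing $H$ so that multiple zeros genuinely drop out at arbitrary level and nebentypus. This requires using the functional equation relating $f$ and $\bar f$, which is why $\theta_f$ involves both. A secondary difficulty is the exponent bookkeeping: the moment input must enter precisely through \eqref{eq:curve}, not merely Corollary~\ref{cor:fourth}, in order to reach $E(\theta)$.
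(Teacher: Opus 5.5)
There is a genuine gap, and it sits in the step you treat as bookkeeping: bounding the simple-zero contribution in terms of $N^s_f(T)$. The pole-detection input is not the real difficulty. The paper just imports de Faveri's divergence \eqref{eq:53}, which becomes $\Sigma(T)=\sum|L'(\rho,f)|\,|\gamma|^{\beta-1/2}\ge T^{1/2-2\epsilon}$ for infinitely many $T$, so your unfinished auxiliary function $H$ can be replaced by that citation. What remains is to show that few simple zeros force $\Sigma(T)$ to be small. Your tools for this, the Weyl bound or H\"older with \eqref{eq:curve}, provably cannot reach $E(\theta)$.

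To see this, take a class of $R=T^r$ zeros with $\beta=\half+a=\theta$ and $|L'(\rho,f)|\approx T^v$, so that $r+v+a\ge\half$. Then:
\begin{itemize}
\item the Weyl bound gives $r\ge(1-\theta)/3$, which is de Faveri's old exponent;
\item the sixth moment, used through H\"older, gives $r\ge(4-6\theta)/5$;
\item the large-value count $R\ll T^{1+\epsilon}V^{-m(\theta)}$ behind \eqref{eq:curve} gives $r\ge(2\theta-1)/(4\theta-1)$ for $\theta\le\frac58$ and $r\ge(1-\theta)/(1+2\theta)$ for $\theta\ge\frac58$.
\end{itemize}
Using \eqref{eq:curve} on a line left of $\beta$ is worse, since $m(\sigma)$ increases with $\sigma$. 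The maximum of these bounds equals $E(\theta)=2(1-\theta)/(7-4\theta)$ only at $\theta=\half$ and $\theta=\frac58$, and is strictly smaller elsewhere. At $\theta=\frac79$ it is $\frac2{23}<\frac4{35}$. So the computation does not produce $E$, and the unconditional $\frac4{35}$ is out of reach. This is exactly the loss described in Remark \ref{rem:why}. Your closing claim that the input must enter through \eqref{eq:curve} points the wrong way: Theorem \ref{thm:lines} is not used in the paper's proof at all.

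The missing idea is to count large values with the block scale retained, rather than through any moment of $L$. The paper turns each zero of the class into a large value of $L$ on $\Real s=\beta-c$ (Lemma \ref{lem:zerostopoints}). It then reopens the sixth-moment decomposition at that height (Proposition \ref{prop:line}). The large value comes from a dyadic block $n\asymp M$, and writing $n^{-\beta}=M^{-a}n^{-1/2}(n/M)^{-a}$ turns it into a large value $W\gg VM^{a}T^{-\delta}$ of that block normalised as on the critical line. Short blocks $M\le T^{2/3}R^{2/3}$ are counted by Huxley's estimate with variable endpoints (Lemma \ref{lem:maximal}). Long blocks are counted by the block form of the two-height large sieve (Theorem \ref{thm:block}), which is not a consequence of the integral bounds in Theorems \ref{thm:main} or \ref{thm:lines}. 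Lemma \ref{lem:opt} then optimises over $(m,w,r)$, with the extremal case at $M=T^{2/3}R^{2/3}$.

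Two further points. First, the branch $\theta_f>\frac79$ does not come from a Weyl-limited optimisation. A Weyl bound for $|L'(\rho,f)|$ yields $(1-\theta)/3$, which decreases in $\theta$, whereas $\frac23\theta-\frac16$ increases; the paper simply quotes this branch from de Faveri's Corollary 4.9. Second, integrating $H(s)X^s$ along a vertical line inside the strip would need lower bounds for $|\Lambda|$ there. The paper's route, working with $|L'(\rho,f)|$ and \eqref{eq:Sigmalarge}, never requires them.
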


The branch $\theta_f>7/9$ is de Faveri's Corollary 4.9.
For $\theta_f\le7/9$ the new exponent replaces $(1-\theta_f)/3$.
The improvement comes from the block large-value estimate
(Theorem \ref{thm:block}), as explained in Remark \ref{rem:why}.
At level one de Faveri's exponent $1/5$ is stronger; for self-dual forms
of level $4$, Cho and Oh's exponent $1/6$ \cite{ChoOh} is also stronger.
The earlier level-one result of Conrey and Ghosh \cite{CG} gives $1/6$
for $f=\Delta$.

\subsubsection{Zero density.}
Let $N_f(\sigma,T)$ be the number of zeros $\rho=\beta+i\gamma$ of $L(s,f)$ with $\beta\ge\sigma$ and $|\gamma|\le T$.  The density hypothesis is the bound $N_f(\sigma,T)\ll T^{2(1-\sigma)+\epsilon}$.

\begin{theorem}\label{thm:density}
Let $f$ be a primitive holomorphic cusp form of arbitrary weight, level and nebentypus.  Then for $\frac{63}{71}\le\sigma<1$ and every $\epsilon>0$
\begin{equation}\label{eq:density}
  N_f(\sigma,T)\ \ll_{f,\epsilon}\ T^{A(\sigma)(1-\sigma)+\epsilon},
\end{equation}
where
\begin{equation}\label{eq:Asigma}
  A(\sigma)=\begin{cases}
    \dfrac{26}{71\sigma-50}&\text{if }\frac{63}{71}\le\sigma\le\frac{207}{220},\\[10pt]
    \dfrac{52}{362\sigma-307}&\text{if }\frac{207}{220}\le\sigma\le\frac{147}{155},\\[10pt]
    \dfrac4{4\sigma-1}&\text{if }\frac{147}{155}\le\sigma<1.
  \end{cases}
\end{equation}
In particular $A(\sigma)<2$ for $\sigma>\frac{63}{71}$.
\end{theorem}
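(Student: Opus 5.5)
We would prove Theorem~\ref{thm:density} by the classical zero-detection method (Montgomery's mollifier argument, in the form of Huxley and Ivi\'c). The inputs are the sixth moment of Theorem~\ref{thm:main}, the Weyl-type bound $L(\half+it,f)\ll|t|^{1/3+\epsilon}$ of \cite{BMN}, and large-value estimates for Dirichlet polynomials.

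\textbf{Zero detection.} Let $M(s)=\sum_{m\le X}\mu_f(m)m^{-s}$ be a mollifier built from the Dirichlet coefficients $\mu_f$ of $1/L(s,f)$, so that $\mu_f(m)\ll d(m)^{O(1)}$. Write
\[
L(s,f)M(s)=1+\sum_{n>X}a(n)n^{-s}.
\]
Integrate against $e^{-n/Y}$. Then each zero $\rho=\beta+i\gamma$ with $\beta\ge\sigma$, $|\gamma|\le T$, satisfies one of two conditions.
\begin{itemize}
\item[(I)] A dyadic piece $\sum_{X<n\le 2^{j}X}a(n)n^{-\rho}e^{-n/Y}$ has absolute value $\gg(\log T)^{-1}$.
\item[(II)] The shifted integral $\int|L(\half+i\gamma+iv,f)M(\half+i\gamma+iv)|\,Y^{1/2-\beta}|\Gamma(\half-\beta+iv)|\,dv$ is $\gg1$.
\end{itemize}
The derivative of $\Lambda$ has no pole, so no main term needs to be removed. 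After a standard spacing reduction, we may assume the zeros are $1$-separated.

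\textbf{Class (II).} Zeros in class (II) produce $R_2$ points $t_r$ at which $|L(\half+it_r,f)|\,|M(\half+it_r)|\gg Y^{\sigma-1/2}T^{-\epsilon}$. We bound $R_2$ by H\"older's inequality. We use the sixth moment $\int|L|^6\ll T^{2+\epsilon}$, the Weyl bound, and large values of $M$: Huxley's estimate
\[
R\ll (GV^{-2}+TG^3V^{-6})T^\epsilon
\]
for $\sum_r|\sum_n b_nn^{-it_r}|^2$-type sums, where $G=\sum|b_n|^2$. The natural choice is to pass to class (I)-type polynomials by taking $L$ itself as a Dirichlet polynomial of length $T$ via the approximate functional equation, or to cube $L$ so that the sixth moment enters as a mean value of a polynomial of length $T^{3}$.

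\textbf{Class (I).} For class (I) we split $n$ into ranges of length $N\in[X,Y T^\epsilon]$. We raise each polynomial to a power $k$ so that $N^k$ lands in a favourable range, and bound the number $R_1$ of large values by the Montgomery mean value theorem together with Huxley's large-value theorem. Where it helps, we also use the block large-value estimate Theorem~\ref{thm:block}, which was evidently designed for this kind of input. Taking $V=N^{1/2-\sigma}$, this gives
\[
R_1\ll\bigl(N^{2-2\sigma}+TN^{4-6\sigma}\bigr)T^\epsilon.
\]

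\textbf{Optimisation.} Combining the two classes gives $N_f(\sigma,T)\ll (R_1+R_2)T^\epsilon$, and we choose $X,Y$ as powers of $T$. With the Weyl exponent $1/3$ and the sixth moment, Ivi\'c's standard computation should yield the third range, $A=4/(4\sigma-1)$, for $\sigma$ near $1$. The intermediate ranges come from choosing $k$ and $Y$ differently, and possibly from using Heath-Brown's refinement of Huxley's theorem with the sixth moment input. The breakpoints $207/220$ and $147/155$ are where consecutive choices of $k$ cross.

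\textbf{Main obstacle.} The main obstacle is the optimisation. We must identify exactly which combination of the sixth moment, the Weyl bound and Theorem~\ref{thm:block} produces the constants $26/(71\sigma-50)$ and $52/(362\sigma-307)$. We must also check that every constraint ($Y\le T^{O(1)}$, the admissible ranges for $N^k$) holds down to $\sigma=63/71$, where $A$ reaches $2$. We would first do the third range, treat the other two as refinements, and verify the piecewise bound by checking continuity at the breakpoints.
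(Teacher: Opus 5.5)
Your zero-detection set-up and your use of the sixth moment for the class (II) zeros agree with the paper. The class (I) step, however, has a genuine gap, and it is exactly the step that produces the constants $26/(71\sigma-50)$ and $52/(362\sigma-307)$.

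Use Huxley's large-value theorem, with each block raised to a length $N\in[Y^{1/2},Y]$. You get $R_1\ll T^{\epsilon}(Y^{2-2\sigma}+TY^{2-3\sigma})$. With $Y=T^{A/2}$, the second term is at most $Y^{2-2\sigma}$ only if $A\ge 2/\sigma$. Since $2/\sigma>2$ for $\sigma<1$, and $2/\sigma\ge 4/(4\sigma-1)$ for $\sigma\ge\frac12$, this gives neither $A(\sigma)<2$ nor even the third range. The sixth moment yields $4/(4\sigma-1)$ only for class (II), and class (I) is then the bottleneck.

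Other choices of $k$ do not rescue this. A block of length $T^{1/(2\sigma)}$ costs $T^{(2-2\sigma)/\sigma}$ for every $k$. Taking $Y<T^{1/(2\sigma)}$ to avoid such blocks makes the class (II) bound $T^2Y^{3-6\sigma}$ larger still.

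Theorem~\ref{thm:block} does not help either. It concerns smoothly weighted sums of $\lambda_f(n)n^{-\frac12-it}$ and exploits their Voronoi structure. The class (I) polynomials instead carry the coefficients $c_n$ of $L(s,f)M_X(s)$ and are evaluated at the zeros. Likewise, no sixth-moment input for $f$ can enter a Heath-Brown-type refinement of class (I), because the relevant inner products $\sum n^{i(\gamma_r-\gamma_s)}$ do not involve $f$. So the breakpoints are not crossings of different choices of $k$.

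The missing idea is the Hal\'asz--Montgomery inequality with exponent pairs, as in Zhang, Zhai and Zhang.

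\textbf{Class (II).} Keep the mollifier short, $X=T^\delta$. Then $|M_X(\frac12+it)|\ll T^{\delta/2}\log T$, and class (II) follows directly from \eqref{eq:target}: $R_2\ll T^{2+O(\delta)}Y^{3-6\sigma}$. Its threshold holds at the final $Y$ because $\sigma>\frac34$.

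\textbf{Class (I).} Raise each block to a length $M\in[Y^{1/2},YT^\delta]$. Apply Hal\'asz--Montgomery with $\xi_n=a_nn^{-\sigma}$ and $\phi_{r,n}=n^{\sigma-\beta_r+i\gamma_r}$. Bound the inner products by $M/|\gamma_r-\gamma_s|+|\gamma_r-\gamma_s|^{\kappa}M^{\lambda-\kappa}$, using Kusmin--Landau and the pair $(\kappa,\lambda)$ on zeros that are $3\Lc^4$-spaced. Each window of ordinates of length $T_0=M^{(2\sigma-1-\lambda+\kappa)/\kappa}T^{-O(\delta)}$ then contains $\ll T^{O(\delta)}M^{2-2\sigma}$ zeros. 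This gives
\[
R_1\ll T^{O(\delta)}\Bigl(Y^{2-2\sigma}+TY^{-\frac{(2+2\kappa)\sigma-1-\kappa-\lambda}{2\kappa}}\Bigr),
\]
and your Huxley bound is the special case $(\kappa,\lambda)=(\frac12,\frac12)$.

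\textbf{The exponent.} With $Y=T^{A/2}$ one obtains $A=\max\bigl(4\kappa/D,\,4/(4\sigma-1)\bigr)$, where $D=(2-2\kappa)\sigma+3\kappa-\lambda-1$. Bourgain's pair $(\frac{13}{84},\frac{55}{84})$ gives $26/(71\sigma-50)$. Its $A$-process image $(\frac{13}{194},\frac{152}{194})$ gives $52/(362\sigma-307)$. The breakpoint $\frac{207}{220}$ is where these two cross, $\frac{147}{155}$ is where the second meets $4/(4\sigma-1)$, and $\frac{63}{71}$ is where $26/(71\sigma-50)=2$.
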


At level one, Ivi\'c \cite{Ivic92} proved the density hypothesis for $\sigma\ge\frac{53}{60}$, and Chen, Debruyne and Vindas \cite{CDV} extended the range to $\sigma\ge\frac{1407}{1601}$.  Neither result gives an exponent below $2$, and by Remark \ref{rem:density} the mean square on the critical line cannot do so within the zero-detection method.  An exponent below $2$ needs a large-value estimate on the critical line.

Zhang, Zhai and Zhang \cite{ZZZ} obtained such exponents for Maass forms of level one.  They combined bounds for high moments of $L(\half+it,f)$ with exponent pairs, and Theorem \ref{thm:density} follows their method, with \eqref{eq:target} in place of their moment bounds.  Near $\sigma=1$ their exponent is smaller than ours, because stronger large-value estimates are available at level one.  At every level, Cossaboom \cite{Cossaboom} has recently proved $N_f(\sigma,T)\ll T^{\frac52(1-\sigma)+\epsilon}$ uniformly for $\half\le\sigma\le1$, and Theorem~\ref{thm:density} improves this for $\sigma\ge\frac{63}{71}$. 

\subsubsection{Non-normal cubic fields.}

Let $K$ be a cubic field whose normal closure has Galois group $S_3$, let $d_K$ be its discriminant, and let $a_K(n)$ be the number of integral ideals of $K$ of norm $n$.  If $d_K<0$ then
\begin{equation}\label{eq:zetaK}
  \zeta_K(s)=\zeta(s)L(s,f_K),
\end{equation}
where $f_K$ is the newform of weight one, level $|d_K|$ and nebentypus $\chi_K$, the quadratic character attached to $\Q(\sqrt{d_K})$.  Its $L$-function is that of the two-dimensional irreducible representation of $S_3$ \cite{Serre77}, which is odd because $K$ has a complex place.

The power sums of $a_K(n)$ have been studied by Fomenko \cite{Fomenko08}, L\"u \cite{Lu13}, Tang and Wang \cite{TangWang24} and Feng \cite{Feng25}, among others.  Jutila's sixth moment has been applied to $f_K$ in this context, although the level of $f_K$ is $|d_K|$ (see for instance \cite[Lemma 3.4]{TangWang24}).

\begin{theorem}\label{thm:cubic}
Let $K$ be a cubic field whose normal closure has Galois group $S_3$ and whose discriminant is negative.  Then for every $\epsilon>0$
\begin{equation}\label{eq:zetaK4}
  \int_0^T\bigl|\zeta_K(\half+it)\bigr|^4\dd t\ \ll_{K,\epsilon}\ T^{2+\epsilon}.
\end{equation}
Moreover there are polynomials $P_1$ and $P_4$, of degrees $1$ and $4$, such that
\begin{align}
  \sum_{n\le x}a_K(n)^2&=xP_1(\log x)+O_{K,\epsilon}\bigl(x^{\frac23+\epsilon}\bigr),\label{eq:S2}\\
  \sum_{n\le x}a_K(n)^3&=xP_4(\log x)+O_{K,\epsilon}\bigl(x^{\frac{331}{373}+\epsilon}\bigr).\label{eq:S3}
\end{align}
\end{theorem}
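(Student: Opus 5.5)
The fourth moment \eqref{eq:zetaK4} follows from \eqref{eq:zetaK} by Cauchy--Schwarz. On the critical line
\[
\int_0^T|\zeta_K(\half+it)|^4\dd t=\int_0^T|\zeta(\half+it)|^4|L(\half+it,f_K)|^4\dd t
\le\Bigl(\int_0^T|\zeta|^{12}\Bigr)^{1/2}\Bigl(\int_0^T|L(\half+it,f_K)|^{8}\Bigr)^{1/2}.
\]
That split is too lossy, so I will use H\"older with exponents $(3/2,3)$ instead:
\[
\int_0^T|\zeta|^4|L(\half+it,f_K)|^4\dd t\le\Bigl(\int_0^T|\zeta|^{12}\Bigr)^{1/3}\Bigl(\int_0^T|L(\half+it,f_K)|^{6}\Bigr)^{2/3}.
\]
I will insert Heath-Brown's twelfth moment $\int_0^T|\zeta(\half+it)|^{12}\ll T^{2+\epsilon}$ and Theorem \ref{thm:main} applied to $f_K$. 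This is legitimate because $f_K$ is a primitive weight-one form of level $|d_K|$, which is exactly the arbitrary-level case not covered by Jutila. The result is $T^{2/3}\cdot T^{4/3}=T^{2+\epsilon}$. Everything should be done dyadically on $[T,2T]$ and then summed.

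For \eqref{eq:S2}, I will use the standard factorisation of the Rankin--Selberg-type Dirichlet series. Since $a_K=1*\lambda_{f_K}$ and the Hecke relations hold for $f_K$, one has
\[
\sum a_K(n)^2n^{-s}=\zeta(s)^2L(s,f_K)^2L(s,\mathrm{Sym}^2f_K)\,U(s)\quad\text{or}\quad \zeta_K(s)^2\,\zeta(s)^{-1}\cdots,
\]
up to a Dirichlet series $U$ absolutely convergent in $\Real s>\half$. Since $\mathrm{Sym}^2$ of the $S_3$ representation is $1\oplus\rho$, the cleanest form is $\zeta(s)^2L(s,f_K)^2L(s,f_K)\cdots$. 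I will fix the exact product, with possibly a factor $L(s,f_K)^3$ and $\zeta(s)^2$ for the cube, by comparing Euler factors at split, inert and ramified primes. The main term $xP_1(\log x)$ comes from the double pole of $\zeta(s)^2$ at $s=1$. Next I apply Perron's formula with truncation $T$, shift the contour to $\Real s=\half+\epsilon$ (or slightly further), and bound the horizontal segments via convexity/Weyl bounds. The vertical integral is controlled by Cauchy--Schwarz in the moments: \eqref{eq:zetaK4}, the fourth moment of $\zeta$, and Theorem \ref{thm:lines} for factors of $L(s,f_K)$ off the line. Choosing $T=x^{1/3}$ gives $x^{2/3+\epsilon}$.

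For \eqref{eq:S3}, the generating series has degree larger by a copy of the degree-four piece. I will follow the exponent bookkeeping of Feng/Tang--Wang but substitute Theorem \ref{thm:main} and Theorem \ref{thm:lines} (notably the $\sigma=\frac58$ fourth moment \eqref{eq:lines}) for their level-one inputs. I will move the line to some $\sigma_0\in(\half,1)$ and split the product into factors whose moments are known: $\zeta^{a}$ via Ivi\'c's moments off the line, $L(s,f_K)^{b}$ via \eqref{eq:curve}, and $L(s,\mathrm{Sym}^2 f_K)$ or the remaining degree via convexity. Then I use H\"older with exponents matching $m(\sigma_0)$, and optimise $\sigma_0$ and $T$. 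The \textbf{main obstacle} is this last step: pinning down the precise factorisation, and then choosing the H\"older partition and $\sigma_0$ so that the optimisation produces exactly $331/373$. My first attempt will be to keep the $L(s,f_K)$ part at the exponent $m(\sigma)$ from \eqref{eq:msigma} in the range $\sigma\le\frac58$, treat the remaining $\zeta$-powers with known $\zeta$ moment curves, and solve the resulting linear system for the balancing point.
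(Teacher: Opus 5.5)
Your treatment of the fourth moment \eqref{eq:zetaK4} is the paper's argument: H\"older with exponents $(3,\frac32)$, Heath-Brown's twelfth moment, and Theorem~\ref{thm:main} for $f_K$. The power sums have genuine gaps, and the first is the factorisation. Write $f=f_K$ and $\chi=\chi_K$. Your candidate $\zeta(s)^2L(s,f)^2L(s,\mathrm{Sym}^2f)$ is wrong. Since $\mathrm{Sym}^2\rho=1\oplus\rho$, its $p$-th coefficient is $3+3\lambda_f(p)$, so it is essentially $\zeta(s)^3L(s,f)^3$. That has a triple pole, which contradicts $\deg P_1=1$. Likewise a factor $\zeta(s)^2$ for the cube contradicts $\deg P_4=4$. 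What you are missing is $\wedge^2\rho=\mathrm{sgn}$, i.e.\ $\lambda_f(p)^2=1+\chi(p)+\lambda_f(p)$. The clean route (Lemma~\ref{lem:cubicfactor}) evaluates on the three conjugacy classes of $S_3$, where $(a_K(p),\chi(p),\lambda_f(p))$ takes the values $(3,1,2)$, $(1,-1,0)$ and $(0,1,-1)$. This gives $a_K(p)^\ell=A_\ell+B_\ell\chi(p)+C_\ell\lambda_f(p)$, hence
\[
D_2(s)=\zeta(s)^2L(s,\chi)L(s,f)^3U_2(s),\qquad D_3(s)=\zeta(s)^5L(s,\chi)^4L(s,f)^9U_3(s).
\]
Every factor is $\zeta$, $L(s,\chi)$ or $L(s,f)$, so no genuinely higher-degree factor appears. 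Nothing should be bounded by convexity: convexity for $L(s,\chi)^4$ alone would weaken $\frac{331}{373}$ to $\frac{359}{401}$.

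Second, the estimates you name do not reach the stated exponents.

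For $\ell=2$ on $\Real s=\half+\epsilon$, \eqref{eq:zetaK4}, the fourth moment of $\zeta$ and Cauchy--Schwarz bound $\int_U^{2U}|D_2|$ only by about $U^{5/3}$, which gives $x^{7/10}$. The choice $T=x^{1/3}$ would need $U^{3/2}$ there. The paper instead moves to $\sigma_0=\frac58+\epsilon$. It bounds $L(s,\chi)\ll U^{1/8+\epsilon}$ pointwise by the Weyl bound, and applies H\"older with exponents $(4,\frac43)$ to $|\zeta|^2|L(s,f)|^3$, using the eighth moment of $\zeta$ for $\sigma>\frac58$ together with \eqref{eq:lines}. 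This gives $U^{9/8+\epsilon}$, so the error is $x/T+x^{5/8}T^{1/8}$, and $T=x^{1/3}$ follows. A Cauchy--Schwarz split cannot do this: $|L(s,f)|^3$ absorbs only H\"older weight $\frac34$ against the fourth moment, and the remaining $\frac14$ must be carried by $|\zeta|^8$.

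For $\ell=3$ you give no argument. The paper takes $\sigma_0=\half+\epsilon$. It bounds $|\zeta|^5|L(s,\chi)|^4|L(s,f)|^3$ pointwise by Bourgain's $\frac{13}{84}$, the Weyl exponent $\frac16$ and the exponent $\frac13$ of \cite{BMN}, and bounds the remaining $|L(s,f)|^6$ by Lemma~\ref{lem:sixthlines}. This gives $U^{65/84+2/3+1+2}=U^{373/84}$, and $T=x^{42/373}$ yields $\frac{331}{373}$.

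The error exponent equals $1$ minus the reciprocal of the total pointwise exponent $\frac{373}{42}$ per unit of $1-\sigma$, provided some moment saves a full factor $U$. Taking $\sigma_0=\frac58$ with \eqref{eq:lines} gives the same value. Your proposed interior line $\half<\sigma_0<\frac58$ with the $m(\sigma)$-moment saves less than a full factor, so it gives a worse exponent. Likewise, using $\frac16$ for $\zeta$ instead of Bourgain's bound gives only $\frac89$.
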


The previous exponents in \eqref{eq:S2} and \eqref{eq:S3} are
\[
  1-\frac{42}{119+8\sqrt{10}}=0.7089\ldots\qquad\text{and}\qquad1-\frac{21}{184+8\sqrt{10}}=0.8996\ldots,
\]
due to Feng \cite{Feng25}, who improved the exponents $\frac57$ and $\frac{321}{356}$ of Tang and Wang \cite{TangWang24}. The polynomials $P_1$ and $P_4$ are the residue polynomials of the generating Dirichlet series; the improvements here concern the error exponents.

The fourth moment follows from H\"older's inequality and the twelfth moment
of $\zeta(s)$. For the power sums we use the factorisation in
Lemma \ref{lem:cubicfactor}; the weight-one dihedral form allows the
relevant symmetric-power factors to split into lower-degree factors.
The exponent $2/3$ also uses Theorem \ref{thm:lines}.

When $d_K>0$ the form attached to $K$ is a Maass form of eigenvalue $\frac14$, and nothing here applies.

\subsubsection{Shifted convolution sums.}
For $j\ge1$ let $\lambda_{j,f}(n)$ be the coefficients of $L(s,f)^j$, and for $1\le H\le x$ put
\begin{equation}\label{eq:Sjf}
  S_{j,f}(x,H)=\sum_{h\le H}\ \sum_{x<n\le2x}\lambda_{j,f}(n)\overline{\lambda_{j,f}(n+h)} .
\end{equation}
For $f$ of level one, L\"u and Wang \cite{LuWang19} proved
\begin{align*}
  S_{2,f}(x,H)&\ll x^{\frac65+\epsilon}H^{\frac25}&&(x^{1/3}\le H\le x^{1-\epsilon}),\\
  S_{3,f}(x,H)&\ll x^{\frac43+\epsilon}H^{\frac13}&&(x^{1/2}\le H\le x^{1-\epsilon}),
\end{align*}
together with bounds for every $j\ge4$.  Their proofs use moments of $L(s,f)$ on the critical line and on the line $\Real s=\frac58$.  D. Wang \cite{Wang22} later improved the case $j=2$ for $H\ge x^{7/16}$.

\begin{theorem}\label{thm:shifted}
Let $f$ be a primitive holomorphic cusp form of arbitrary weight, level and nebentypus, let $1\le H\le x$ and let $\epsilon>0$.  Then
\begin{align}
  S_{2,f}(x,H)&\ll x^{\epsilon}\min\bigl(x^{\frac54}H^{\frac14},\,x^{\frac98}H^{\frac12}\bigr),\label{eq:S2f}\\
  S_{3,f}(x,H)&\ll x^{\frac32+\epsilon},\label{eq:S3f}\\
  S_{j,f}(x,H)&\ll x^{2-\frac3{2j}+\epsilon}\qquad(j\ge4),\label{eq:Sjf4}
\end{align}
with implied constants depending on $f$, $j$ and $\epsilon$.
\end{theorem}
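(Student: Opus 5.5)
The plan is to follow L\"u and Wang \cite{LuWang19}, replacing their level-one inputs with Theorem \ref{thm:main}, Corollary \ref{cor:fourth} and Theorem \ref{thm:lines}. Since $|\lambda_{j,f}(n)|\le d_{2j}(n)$, the sum $S_{j,f}(x,H)$ can be compared with the mean square of the short sums
\[
  \Delta_j(y,h)=\sum_{y<n\le y+h}\lambda_{j,f}(n).
\]
By Cauchy--Schwarz, or by the standard identity expressing $\sum_h\sum_n a(n)\overline{a(n+h)}$ through $\int|\sum_{y<n\le y+H}a(n)|^2\,dy$ up to diagonal and boundary terms, the task reduces to proving bounds of the shape
\[
  \int_x^{2x}\Bigl|\sum_{y<n\le y+H}\lambda_{j,f}(n)\Bigr|^2\dd y
\]
together with a trivial term of size $x^{1+\epsilon}H$. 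Because $f$ is cuspidal there is no main term, and the generating function $L(s,f)^j$ is entire.

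For the short-interval mean square I would apply Perron's formula with a smooth weight and then the Plancherel-type lemma (Gallagher / Saffari--Vaughan). This gives
\[
  \int_x^{2x}|\Delta_j(y,H)|^2\dd y\ll x^{2\sigma+\epsilon}\Bigl(\int_{|t|\le x/H}H^2|L(\sigma+it,f)|^{2j}\dd t+\int_{x/H<|t|\le x^{1+\epsilon}}\frac{|L(\sigma+it,f)|^{2j}}{t^2}\dd t\Bigr).
\]
Here $\sigma$ is a free line with $\half\le\sigma<1$. The $t$-integrals are split dyadically, and each dyadic piece $\int_T^{2T}|L(\sigma+it,f)|^{2j}$ is bounded by combining moment and pointwise inputs.

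For $j=2$, the choice $\sigma=\half$ uses the fourth moment $T^{3/2+\epsilon}$ from Corollary \ref{cor:fourth}. Together with $x/H$ this should give the branch $x^{5/4}H^{1/4}$. The alternative $\sigma=\frac58$, using \eqref{eq:lines}, should give $x^{9/8}H^{1/2}$, and one takes the minimum. For $j=3$, the choice $\sigma=\half$ with Theorem \ref{thm:main} gives $\int_T^{2T}|L|^6\ll T^{2+\epsilon}$, and inserting this should produce $x^{3/2+\epsilon}$ uniformly in $H$. For $j\ge4$, I would write $|L|^{2j}\le\max|L|^{2j-6}\,|L|^6$ on a suitable line. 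The Weyl bound $|L(\half+it,f)|\ll |t|^{1/3+\epsilon}$, or its convexity interpolation on $\Real s=\sigma$ from \cite{BMN}, is combined with Theorem \ref{thm:main} or Theorem \ref{thm:lines}, and $\sigma$ is optimised to reach the exponent $2-\frac3{2j}$.

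The main obstacle is bookkeeping the optimisation over the line $\sigma$ and the dyadic range of $t$ so that it produces exactly the stated exponents, uniformly in $1\le H\le x$. There are two delicate points here. First, the trivial diagonal term $xH$ and the long-$t$ tail must never dominate. Second, the reduction from the double sum $S_{j,f}$ to the short-interval mean square must be done with the absolute value on the inner sum, and Cauchy--Schwarz costs a factor $H^{1/2}$. I would first try the direct Cauchy--Schwarz route, checking it on the $j=3$ case where the claimed bound is independent of $H$. If it loses too much, I would switch to expanding $|\Delta_j|^2$ exactly, as L\"u--Wang do.
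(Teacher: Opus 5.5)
Your architecture is the paper's. With $B(y)=\sum_{m\le y}\overline{\lambda_{j,f}(m)}$ one has $S_{j,f}(x,H)=\sum_{x<n\le2x}\lambda_{j,f}(n)\bigl(B(n+H)-B(n)\bigr)$. Cauchy over $n$ then costs $x^{1/2+\epsilon}$, not $H^{1/2}$, and produces no separate diagonal term. Perron and Plancherel, split at $|t|=x/H$, then bound $\int_x^{2x}|B(y+H)-B(y)|^2\dd y$ by $x^{\epsilon}(x^{2\sigma-1+b}H^{2-b}+\dots)$ whenever $\int_{-U}^U|L(\sigma+it,f)|^{2j}\dd t\ll U^{b+\epsilon}$ with $b\le2$. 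Your choices for $j=2,3$ are the paper's.

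Your displayed Plancherel bound has the wrong powers of $x$. The low-frequency term carries $x^{2\sigma-1}H^2$ and the tail carries $x^{2\sigma+1}|t|^{-2}$; the two must agree at $|t|=x/H$. As written, the fourth moment would give only $x^{7/4}H^{1/4}$. Your fallback identity for $\int|\sum_{y<n\le y+H}|^2$ yields the weighted sum $\sum_{|h|<H}(H-|h|)\cdots$, not $S_{j,f}$, so it is not a substitute for the Cauchy step.

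The genuine gap is $j\ge4$. At $H=x$ the method gives $x^{1/2}(x^{2\sigma+1})^{1/2}=x^{1+\sigma}$ only if $b\le2$, and it gives something worse if $b>2$. So you need the smallest $\sigma$ with $b\le2$.

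Your split $|L|^{2j}\le\max|L|^{2j-6}|L|^6$ cannot achieve this for $j\ge7$. The sixth moment over $[T,2T]$ on any line is $\gg T$, so $b\ge1+(2j-6)\cdot\frac23(1-\sigma)$. Then $b\le2$ forces $1-\sigma\le\frac3{4j-12}$, which gives at best $x^{2-3/(4j-12)+\epsilon}$. This is weaker than $x^{2-3/(2j)}$ for every $j\ge7$; the two agree only at $j=6$. The critical line with Theorem \ref{thm:main} is worse still, giving $x^{1/2+j/3}$. For $j=4,5$ the sixth-moment split works only if you first derive $\int|L(\sigma+it,f)|^6\ll T^{4(1-\sigma)+\epsilon}$ from Theorem \ref{thm:lines} and the Weyl bound, which is the correct argument in disguise.

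The missing idea is to use the full $m(\sigma)$-th moment at the point where $m(\sigma)=j$, namely $\sigma=1-\frac3{2j}\in[\frac58,1)$. Bound $j$ of the factors pointwise by $|t|^{\frac23(1-\sigma)}$, and the remaining $|L|^j$ by \eqref{eq:curve}. This gives $b=1+j\cdot\frac23\cdot\frac3{2j}=2$, and hence $S_{j,f}(x,H)\ll x^{1+\sigma+\epsilon}=x^{2-\frac3{2j}+\epsilon}$ uniformly in $H$. The horizontal Perron segments are also $\ll x^{2\sigma+1+\epsilon}$, since $\frac{2j}3(1-\sigma)=1$.
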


The trivial bound is $x^{1+\epsilon}H$.  So \eqref{eq:S2f}, \eqref{eq:S3f} and \eqref{eq:Sjf4} are non-trivial for ${H\ge x^{1/4+\epsilon}}$, ${H\ge x^{1/2+\epsilon}}$ and ${H\ge x^{1-3/(2j)+\epsilon}}$ respectively.  We have found no earlier bound for these sums at level $N>1$.

At level one the bounds are also new in part of the range.  For $j=2$ and $j=3$ they are smaller than those of L\"u and Wang wherever theirs hold.  For $j=2$ the bound is smaller than D. Wang's when $x^{1/4}<H<x^{1/2}$, and for every $j\ge4$ it is smaller than that of L\"u and Wang when $H>x^{1-3/(2j)}$, which is the whole of their range apart from its left end.

The proof applies Cauchy's inequality before Perron's formula, and then
Plancherel's theorem. This retains the decay of the height weight
$\min(H/x,1/|t|)^2$ and permits a long Perron truncation.

\subsubsection{The general divisor problem.}
For $f$ of level one, sums of $\lambda_{j,f}(n)$ over $n\le x$ have been bounded by L\"u \cite{Lu12} and W. Zhang \cite{ZhangW}, among others. At level $N$, Krishnamoorthy \cite{Krishnamoorthy} proved the exponent $1-3/(2(j+3))$, for holomorphic and for Maass newforms.

\begin{theorem}\label{thm:divisor}
Let $f$ be a primitive holomorphic cusp form of arbitrary weight, level and nebentypus, let $j\ge2$ and let $\epsilon>0$.  Then
\[
  \sum_{n\le x}\lambda_{j,f}(n)\ \ll_{f,j,\epsilon}\ x^{\theta_j+\epsilon},\qquad
  \theta_j=\begin{cases}\dfrac34-\dfrac1{2j}&\text{if }2\le j\le4,\\[10pt]1-\dfrac3{2j}&\text{if }j\ge4.\end{cases}
\]
\end{theorem}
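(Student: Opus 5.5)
We bound $\sum_{n\le x}\lambda_{j,f}(n)$ by the truncated Perron formula applied to $L(s,f)^j$, moving the contour to the left, and estimating the resulting integrals with the mean values of Theorem~\ref{thm:main}, Corollary~\ref{cor:fourth} and Theorem~\ref{thm:lines}. Since $f$ is a cusp form, $L(s,f)^j$ is entire, so there is no main term. We take $c=1+\epsilon$ and truncate at height $T\le x$. Deligne's bound $|\lambda_{j,f}(n)|\le d_{2j}(n)$ gives the standard estimate
\[
  \sum_{n\le x}\lambda_{j,f}(n)=\frac1{2\pi i}\int_{c-iT}^{c+iT}L(s,f)^j\frac{x^s}{s}\dd s+O\bigl(x^{1+\epsilon}T^{-1}\bigr).
\]
We then shift the contour to $\Real s=\sigma_0$, with $\sigma_0=\half$ for $2\le j\le4$ and $\sigma_0=\frac58$ or a line given by \eqref{eq:curve} for $j\ge4$. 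The horizontal segments are controlled by the convexity bound interpolated with the Weyl-type bound $|L(\sigma+iT,f)|\ll T^{\frac23(1-\sigma)+\epsilon}$ of \cite{BMN}. They contribute at most $x^{1+\epsilon}T^{-1}+x^{\sigma_0}T^{\frac23 j(1-\sigma_0)-1+\epsilon}$, which is dominated by the other terms once $T$ is chosen.

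On the vertical line we split dyadically in $|t|\sim U\le T$ and apply H\"older's inequality. For $2\le j\le4$ on $\Real s=\half$ we interpolate between the mean square $U^{1+\epsilon}$ and the sixth moment $U^{2+\epsilon}$. This gives $\int_U^{2U}|L(\half+it,f)|^j\dd t\ll U^{\frac{j+2}{4}+\epsilon}$, since the exponent is linear in $j$ between $(2,1)$ and $(6,2)$. Dividing by $|s|\asymp U$, the line contributes
\[
  x^{1/2}T^{\frac{j-2}{4}+\epsilon}.
\]
We balance this against $x^{1+\epsilon}T^{-1}$ by choosing $T=x^{2/(j+2)}$. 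The resulting bound $x^{1-2/(j+2)}=x^{j/(j+2)}$ is weaker than the claim $\frac34-\frac1{2j}$. So the plain approach does not suffice, and the truncation loss must be reduced.

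The fix I would try first is a smoothed Perron formula. We weight by a smooth function that equals $1$ on $[1,x]$ and decays over $[x,x+y]$. The cost of the short interval is $\sum_{x<n\le x+y}|\lambda_{j,f}(n)|\ll yx^{\epsilon}$, which is trivial in $y$. Because the Mellin transform decays rapidly beyond $|t|\gg x/y$, we may take $T=x^{1+\epsilon}/y$. The line $\Real s=\half$ then contributes $x^{1/2}T^{(j-2)/4}$. Balancing $y=x^{1/2}(x/y)^{(j-2)/4}$ gives
\[
  y^{(j+2)/4}=x^{1/2+(j-2)/4}=x^{j/4},
\]
that is, $y=x^{j/(j+2)}$, which is again the same exponent. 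So extra savings are needed. For $j=4$ the target is $\frac58$, which matches $\sigma_0=\frac58$ together with \eqref{eq:lines}. This suggests working on $\Real s=\frac58$ with the fourth moment $U^{1+\epsilon}$. For $j=4$ the line integral is then $x^{5/8}\log$, and the truncation error $x/T$ is negligible once $T$ is large. The one requirement is that the horizontal segments stay small, which holds because $|L|^4\ll T^{1+\epsilon}$ at $\sigma=\frac58$ and $x^{5/8}T^{\epsilon}$ is acceptable with $T=x$.

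For $j\ge4$ the same idea applies on $\Real s=\sigma$, with $m(\sigma)=\frac{3}{2-2\sigma}$. If $j\le m(\sigma)$, H\"older gives a contribution $x^{\sigma+\epsilon}$. We choose $\sigma$ with $m(\sigma)=j$, that is, $\sigma=1-\frac3{2j}$, which yields $\theta_j=1-\frac3{2j}$. For $2\le j\le4$ we interpolate between $\Real s=\half$ and $\frac58$ using \eqref{eq:curve}. With $m(\sigma)=\frac{2}{3-4\sigma}=j$ we get $\sigma=\frac34-\frac1{2j}$, which matches $\theta_j$ exactly. So the actual plan is uniform. For each $j$ we pick $\sigma_j=\theta_j$, where $m(\sigma_j)=j$. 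Theorem~\ref{thm:lines} then gives $\int_U^{2U}|L(\sigma_j+it,f)|^j\dd t\ll U^{1+\epsilon}$. The shifted line contributes $x^{\sigma_j}\sum_U U^{\epsilon}\ll x^{\theta_j+\epsilon}$, and we take $T=x$.

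The main obstacle is the horizontal segments from $\sigma_j$ to $c$ at height $T$. A pointwise bound for $|L(\sigma+iT,f)|^j$ may be too large. I would avoid it by averaging: choose $T\in[x,2x]$ so that $\int_{\sigma_j}^{c}|L(\sigma+iT,f)|^j\dd\sigma$ is at most the average over $T$. Using Theorem~\ref{thm:lines} on each line $\sigma\ge\sigma_j$ (where $m(\sigma)\ge j$ since $m$ is increasing), and the convexity of moments in $\sigma$, this average is $\ll x^{\epsilon}$. The horizontal contribution is therefore $\ll x^{c}T^{-1}x^{\epsilon}\ll x^{\epsilon}$, which finishes the proof.
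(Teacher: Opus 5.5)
Your final plan is correct and is the paper's proof: Perron's formula with $T=x$, a shift to $\Real s=\theta_j$ where $m(\theta_j)=j$, and Theorem~\ref{thm:lines} (for $f$ and for $\bar f$) on the vertical line. The one difference is the horizontal segments, where no averaging over $T$ is needed. The pointwise Weyl-type bound gives $x^{\sigma}T^{\frac{2j}{3}(1-\sigma)-1}$, which is log-linear in $\sigma$, so only the endpoints matter. At $\sigma=\theta_j$ the exponent $\frac{2j}{3}(1-\theta_j)$ equals $\frac{j+2}{6}\le1$ for $j\le4$ and equals $1$ for $j\ge4$. Hence with $T=x$ your own earlier horizontal estimate already gives $x^{\theta_j+\epsilon}$. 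The averaging step, and with it the need for moment bounds uniform in $\sigma$ up to $\Real s=c$, can therefore be dropped.
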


The two expressions for $\theta_j$ agree at $j=4$. The exponent $\theta_j$ is smaller than Krishnamoorthy's for every $j\ge2$. For example $\theta_3=\frac7{12}$ and $\theta_4=\frac58$, against $\frac34$ and $\frac{11}{14}$. At level one, $\theta_j$ is the exponent of W. Zhang for $2\le j\le4$ and that of L\"u for $j\ge4$, so Theorem \ref{thm:divisor} extends both to every level. The case $j=2$ needs only the mean square.

\subsection{Outline of the proof}\label{sec:outline}
Throughout, $f$ is fixed. Implied constants may depend on $f$ and on the small positive number $\delta$; at the end $\delta$ is chosen sufficiently small in terms of $\epsilon$. We write $\Lc=\log T$, $\e(x)=e^{2\pi ix}$, and $\|x\|$ for distance to the nearest integer. The notation $T^{O(\delta)}$ denotes $T^{c\delta}$ for an absolute constant $c$.

We first reduce Theorem~\ref{thm:main} to a bound for the number of large values. For $V>0$ let $R=R(V,T)$ be the number of $1$-spaced points $t_\nu\in[T,2T]$ at which $|L(\half+it_\nu,f)|\ge V$. It is enough to prove
\[
  R\ll T^{2+O(\delta)}V^{-6},
\]
since the sixth moment then follows by summing over $O(\Lc)$ dyadic ranges of $V$ and of $T$ (Section~\ref{sec:proofmain}).

The mean square of Theorem~\ref{thm:second} gives $RV^2\ll T\Lc^3$, which is enough when $V\le T^{1/4}$, up to a small power of $T$. Since the Weyl-type bound of \cite{BMN} gives $V\ll T^{1/3}\Lc$, it remains to treat the range
\[
  T^{1/4+\delta}\le V\ll T^{1/3}\Lc,
\]
and in it the mean square also gives $R\ll T^{1/2-2\delta}\Lc^3$.

By the approximate functional equation (Lemma~\ref{lem:afe}), a large value of $L(\half+it,f)$ gives a large value of a smoothly weighted sum of $\lambda_f(n)n^{-1/2-it}$ of length about $T$. The argument is organised around the single scale
\[
  Q=(TR)^{2/3},
\]
which depends on the number of large values being counted. The reason for this choice appears only at the end, where $Q$ balances the two terms of the final estimate. Since $R$ may be as large as $T^{1/2-2\delta}\Lc^3$, the scale $Q$ may be almost as large as $T$.

The sum is divided at $n\asymp T^\delta Q$. If the initial segment $n\le T^\delta Q$ is large at $\gg R$ of the points, Huxley's large-value theorem gives $R\ll T^{O(\delta)}QV^{-2}$ in the range above, and with $Q=(TR)^{2/3}$ this is already $R\ll T^{2+O(\delta)}V^{-6}$. The form of Huxley's theorem needed here, in which the length of the sum may depend on the height, is Lemma~\ref{lem:maximal}. The initial segment cannot be estimated trivially, as it is in the reduction of \cite{BMN}; see Remark~\ref{rem:32}.

Otherwise some dyadic block $n\asymp M$, with $T^\delta Q/2\le M\le T^{1+\delta/16}$, is large at $\gg R\Lc^{-1}$ of the points. On this block $n^{-it}=\e(-\frac t{2\pi}\log n)$ has phase derivative $-t/2\pi n$, which is approximated by the fractions $\alpha_j=-u_j/v_j$ of a Farey system of order
\[
  K=(M/Q)^{1/2}.
\]

A smooth partition of unity divides the block into pieces, one for each fraction, on which $n^{-it}\e(u_jn/v_j)$ varies slowly. Each piece is transformed by the Voronoi formula of Booker, Milinovich and Ng \cite[Proposition 3.1]{BMN}, which holds at every rational point and therefore at every level. The dual sums have length $M/K^2=Q$ for every block, and this is the reason for the choice of $K$. The level enters only through $O_N(1)$ arithmetic choices, among them the twist of $\bar f$ by a character modulo $N$ (Section~\ref{sec:transformed}).

The transformation also produces two error terms. Proposition~\ref{prop:book} shows that each is either smaller than $V$ by a power of $T$ or already forces the bound for $R$.

What remains is the explicit part of the transform. It is a sum over the fractions $j$ of the block of dual sums of the form
\[
  S_j(t)=\sum_{\ell\ll Q}\lambda_{\bar f^{\chi}}(\ell)\,\ell^{-1/4}\kappa_j(\ell;t)\,\e\bigl(g_j(\ell;t)\bigr).
\]
Here $\bar f^{\chi}$ is a twist of $\bar f$, the amplitude satisfies $\kappa_j(\ell;t)\ll v_j^{-1/2}M^{1/4}T^{-1/2}$ and varies slowly, and $g_j(\ell;t)$, written $g^\pm_{jr}(\ell;t)$ in Section~\ref{sec:endgame}, is the value of the phase at its stationary point. A smooth cutoff, omitted from the display, restricts the sum to $\ell\ll Q$. The exact form is \eqref{eq:transform}.

Bounding these dual sums on average over the heights is the new part of the argument. In the pointwise problem of \cite{BMN} the height $t$ is fixed, and the large sieve there compares the phases of different fractions at that one height. Here the heights vary, and the stationary point moves with the height, so the phases must be compared at two different heights, through
\[
  \Phi(\ell)=g_i(\ell;t_p)-g_j(\ell;t_{p'}).
\]
Section~\ref{sec:endgame} does this in three steps, which follow the mean-value argument of Jutila at level one \cite[pp.~112--122]{Jutila}.

First, the fractions are divided into $T^{O(\delta)}$ classes. Within a class the denominators are of the same order $\Kc$, distinct fractions are at least $\Kc^{-2}T^\delta$ apart, and the integers $u_jv_j$ are distinct and lie in a short interval. These are the conditions (S0)--(S2) of Section~\ref{sec:separate}, and they make the phases of different fractions distinguishable.

Secondly, the heights are grouped into intervals of length $Z=\max(\Kc^2T/4M,1)$. On each interval the cutoff is frozen at the left end, so that the dual sums become holomorphic functions of $t$, and by Gallagher's inequality the sum over the heights in each interval is bounded by integrals in $t$ of that sum and of its derivative.

In these integrals the cross terms between different fractions are negligible, by repeated integration by parts in $t$, because
\[
  \frac{\partial}{\partial t}\bigl(g_i(\ell;t)-g_j(\ell';t)\bigr)=\frac1{2\pi}\log\frac{\alpha_i}{\alpha_j}+O\Bigl(\frac M{K\Kc T}\Bigr)
\]
is bounded away from zero on the scale $Z$ (Lemma~\ref{lem:offdiag}). What remains is the mean square of the individual dual sums $S_j(t_p)$, and of their derivatives in $t$, at $P\le R$ heights $t_p$ spaced at least $Z$ apart.

Thirdly, Proposition~\ref{prop:sieve} bounds this mean square. It is a large sieve inequality over pairs $(t_p,j)$ of a height and a fraction. By Bombieri's form of Hal\'asz's inequality it reduces to the exponential sums $\sum_\ell\e(\Phi(\ell))$ for two such pairs, and Lemma~\ref{lem:deriv} computes $\Phi'$ and $\Phi''$ exactly. They have the same shape as Jutila's derivatives at level one, and the level enters only through bounded arithmetic factors. There are four cases:
\begin{enumerate}
\item The diagonal pair is estimated trivially.
\item For one fraction at two heights, $\Phi'$ is monotone, and the first- or the second-derivative estimate applies according to the distance between the heights.
\item For two different fractions, the distinctness of the integers $u_jv_j$ keeps $|\Phi''|$ large, except for at most one exceptional fraction for each height.
\item For the exceptional fraction, either Lemma~\ref{lem:sep} keeps $\Phi'$ away from the integers and the first-derivative estimate applies, or $|\Phi''|$ is again large.
\end{enumerate}

Summing over dyadic ranges of $\ell$ and inserting the large sieve into Gallagher's inequality gives, in Proposition~\ref{prop:final},
\[
  RV\ll T^{O(\delta)}\bigl(R^{1/2}Q^{1/2}+RT^{1/2}Q^{-1/4}\bigr).
\]
The first term increases with $Q$ and the second decreases. They are equal when $Q=(TR)^{2/3}$, the choice made at the start, and then
\[
  RV\ll T^{1/3+O(\delta)}R^{5/6},\qquad\text{that is,}\qquad R\ll T^{2+O(\delta)}V^{-6}.
\]
This explains both the choice of $Q$ and the exponent $6$.

Three features of the construction are needed to use the transformation of \cite{BMN} here:
\begin{enumerate}
\item The weight of the approximate functional equation is kept inside the transform (Proposition~\ref{prop:ext}), so that every piece is smoothly weighted and the dual amplitude is holomorphic in $t$, as the integration by parts in $t$ requires.
\item Every endpoint of the partition, including the first and the last, is placed at a Farey mediant. The derivative bounds of \cite[Lemma 4.3]{BMN} need the support of each piece to lie within $O(HK/v_j)$ of the point $tv_j/2\pi u_j$ attached to its fraction, where $H=MQ/T$ is the smoothing width, and in \cite{BMN} this holds only for the interior pieces (Remark~\ref{rem:boundary}).
\item The partition is fixed on height intervals of length $T/Q$ before the holomorphic mean-value argument is applied (Section~\ref{sec:setup}).
\end{enumerate}

 Appendix~\ref{sec:evidence} checks that the transformation remains valid for $Q$ up to $t^{1-\epsilon}$, beyond the range $Q\ll t^{2/3}$ of \cite{BMN}, at the cost of a larger smoothing order. Appendix~\ref{sec:uniform} records how the constants depend on $f$. The argument gives a bound polynomial in the weight and the level when $kN\le T^{c\epsilon}$, provided the constants of \cite{BMN} are polynomial in $k$ and $N$, as they state. It does not give a bound uniform in $N$ and $T$ together, because for large $N$ some blocks lie outside the range in which the transformation applies.

\section{The approximate functional equation}
\label{sec:afe}

We write $\GC(s)=2(2\pi)^{-s}\Gamma(s)$ and
\[
  \Lambda(s,f)=\GC(s+\tfrac{k-1}2)L(s,f),
\]
so that
\[
  \Lambda(s,f)=\epsilon_fN^{\frac12-s}\Lambda(1-s,\bar f),
\]
and let
\begin{equation}\label{eq:conductor}
  C=C(f,t)=\frac N{\pi^2}\Bigl|\tfrac{k+1}2+it\Bigr|\Bigl|\tfrac{k+3}2+it\Bigr|
\end{equation}
be the analytic conductor, so that $C\asymp_kNt^2$ for $|t|\ge1$.

\begin{lemma}\label{lem:afe}
Let $|t|\ge1$, let $A>0$, and let $X$ satisfy $C^{\epsilon}\le X\le C^{1-\epsilon}$.  Then
\[
  L(\half+it,f)=\sum_{n\ge1}\frac{\lambda_f(n)}{n^{\frac12+it}}V\Bigl(\frac nX\Bigr)
  +\epsilon_f\,N^{-it}\frac{\GC(\tfrac k2-it)}{\GC(\tfrac k2+it)}\sum_{n\ge1}\frac{\lambda_{\bar f}(n)}{n^{\frac12-it}}W\Bigl(\frac{nX}C\Bigr),
\]
where $V=V_t$ and $W=W_t$ are holomorphic in the sector $|\arg y|\le\pi/3$, satisfy $V(y),W(y)=1+O(|y|^{1/4})$ and $V'(y),W'(y)\ll|y|^{-3/4}$ for $|y|\le1$ and $V(y),W(y),yV'(y),yW'(y)\ll_A|y|^{-A}$ for $|y|\ge1$ there, uniformly in $t$, and depend only on $k$, $N$ and $t$.

For the application in Section~\ref{sec:zeros}, the same holds with $\half+it$ replaced by $\beta+it$, $\half-\frac{2}{\log|t|}\le\beta\le1$ and $|t|\ge e^8$, the first sum having terms $\lambda_f(n)n^{-\beta-it}V(n/X)$ and the second sum being
\[
  \gamma_f(\beta+it)\sum_{n\ge1}\lambda_{\bar f}(n)n^{-(1-\beta)+it}W(nX/C)
\]
with $|\gamma_f(\beta+it)|\asymp_{k,N}|t|^{1-2\beta}$, and with $V,W$ depending also on $\beta$.
\end{lemma}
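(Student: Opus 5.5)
We will follow the standard contour-shift derivation of the approximate functional equation, with a weight chosen so that the resulting cutoffs are holomorphic in a sector and satisfy the stated bounds. Fix $s_0=\half+it$. Let $G(w)=e^{w^2}$, or a polynomial multiple of it normalised by $G(0)=1$ and made even, and consider
\[
  I=\frac1{2\pi i}\int_{(2)}\Lambda(s_0+w,f)\,G(w)\,X_0^{w}\,\frac{dw}{w},
\]
where $X_0$ is a scaling parameter, chosen as $X/\sqrt{C}$ with suitable normalisation. Shifting the contour to $\Real w=-2$ picks up the residue $\Lambda(s_0,f)$ at $w=0$. On the new line we apply the functional equation $\Lambda(s,f)=\epsilon_fN^{1/2-s}\Lambda(1-s,\bar f)$ and substitute $w\mapsto -w$.

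Dividing by $\GC(s_0+\frac{k-1}2)$ and expanding the Dirichlet series gives the two sums. For the first,
\[
  V(y)=\frac1{2\pi i}\int_{(2)}\frac{\GC(\frac k2+it+w)}{\GC(\frac k2+it)}G(w)\Bigl(\frac{\sqrt C}{y\sqrt{\cdots}}\Bigr)^{w}\frac{dw}{w},
\]
and $W$ is the analogous integral for $\bar f$. The prefactor $\epsilon_fN^{-it}\GC(\frac k2-it)/\GC(\frac k2+it)$ is produced by the ratio of gamma factors at $1-s_0$ and $s_0$. The normalisation is chosen so that the $X$ in $V(n/X)$ and the $C/X$ in $W(nX/C)$ come out exactly. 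Concretely, we absorb $C^{w/2}$ into the ratio
\[
  \frac{\GC(\frac k2+it+w)}{\GC(\frac k2+it)}\,C^{-w/2}\,N^{w/2},
\]
which by Stirling is $\ll_{\Real w}(1+|w|)^{O(|\Real w|)}$ uniformly in $t$. That is exactly why $C$ is defined through $|\frac{k+1}2+it||\frac{k+3}2+it|$, which matches $|\frac k2+it|^2$ up to bounded factors. The dependence only on $k$, $N$ and $t$ is then visible.

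Next come the analytic properties. For $y$ in the sector $|\arg y|\le\pi/3$ we have $|y^{-w}|=|y|^{-\Real w}e^{\Imag w\arg y}$, and the factor $e^{\pi|\Imag w|/3}$ is dominated by the Gaussian decay of $G$. So the integrals converge absolutely and define holomorphic functions there. For $|y|\ge1$ we move the line to $\Real w=A$, which gives $V(y)\ll_A|y|^{-A}$. Differentiating under the integral multiplies the integrand by $-w$, which gives the bound for $yV'(y)$. For $|y|\le1$ we move the line to $\Real w=-1/4$. The pole at $0$ gives the constant $1$, and the remaining integral is $O(|y|^{1/4})$. For $V'$ the integrand $y^{-w-1}$ has no pole at $w=0$ once the factor $w$ cancels, so a shift to $\Real w=-1/4$ gives $V'\ll|y|^{-3/4}$. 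The condition $C^\epsilon\le X\le C^{1-\epsilon}$ is not needed for the identity itself. It is recorded because it is the range in which both sums have effective length that is a positive power of $C$.

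Finally, the variant at $\beta+it$. We repeat the argument with $s_0=\beta+it$. Then $\gamma_f(\beta+it)=\epsilon_fN^{1/2-s_0}\GC(1-s_0+\frac{k-1}2)/\GC(s_0+\frac{k-1}2)$, and Stirling gives $|\gamma_f|\asymp|t|^{1-2\beta}$, since $|\Gamma(a+it)|\asymp|t|^{a-1/2}e^{-\pi|t|/2}$. The main point to check is uniformity in $\beta$, including slightly to the left of $\half$. For $|t|\ge e^8$ the gamma ratios remain uniformly controlled along vertical lines with $|\Real w|\le A+1$. No poles of $\Lambda$ intervene, because $f$ is cuspidal and so $\Lambda$ is entire. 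The obstacle I expect is bookkeeping: keeping the Stirling bounds uniform in $t$ after normalising by $C^{w/2}$, both in the sector and for derivatives. I would handle it by writing the ratio as $\exp$ of a Stirling expansion valid for $|w|\le|t|/2$, and bounding the range $|w|>|t|/2$ crudely by the Gaussian.
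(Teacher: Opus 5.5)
Your argument at $\half+it$ is the paper's own. It uses the Gaussian weight and the normalisation $c_t^{-w}=C^{-w/2}N^{w/2}$. The shift of $I$ to $\Real w=-2$ uses the functional equation and $w\mapsto-w$. The weight integrals are shifted to $\Real w=A$ and to $\Real w=-\frac14$, where the gamma poles lie at $\Real w\le-\frac k2\le-\half$. One small correction: the gamma ratio is $\ll(1+|w|)^Be^{\pi|\Imag w|/2}$, not polynomially bounded, but the Gaussian absorbs this.

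The gap is in the variant at $\beta+it$. Entireness of $\Lambda$ justifies the shift of $I$. It does not justify the shifts used for the bounds on $V$ and $W$, whose integrands are gamma ratios, and these have poles.

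For weight $k=1$ the dual weight contains $\GC(1-\beta-it+w)/\GC(1-\beta-it)$, which has a pole at $w_0=-(1-\beta)+it$. For $\frac34<\beta<1$ this pole lies strictly between $\Real w=-\frac14$ and $\Real w=0$, and for $\beta=1$ it lies on $\Real w=0$. Either way, your shift to $\Real w=-\frac14$ crosses it. The residue is exponentially small in $t$, of size $e^{-t^2+O(|t|)}|y|^{1-\beta}$ in the sector. Since $1-\beta<\frac14$, however, it is not $O(|y|^{1/4})$ as $y\to0$, and its derivative is not $O(|y|^{-3/4})$; at $\beta=1$ it does not even tend to $0$. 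So with $G(w)=e^{w^2}$ the stated uniform bounds for $W$ are false, not merely unproved. For $V$ there is a milder edge case: the first pole, at $\Real w=-\beta$, reaches the line $\Real w=-\frac14$ when $\beta=\frac14$ and $|t|=e^8$.

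The fix is to make your unspecified ``polynomial multiple'' specific. Take
\[
  G_{\beta,t}(w)=e^{w^2}\Bigl(1-\frac{w^2}{(\beta+\frac{k-1}2+it)^2}\Bigr)\Bigl(1-\frac{w^2}{(1-\beta+\frac{k-1}2-it)^2}\Bigr).
\]
This function is even, as the substitution $w\mapsto-w$ requires, and equals $1$ at $0$. It keeps Gaussian decay with coefficients bounded uniformly for $|t|\ge e^8$. It vanishes at the first pole of each gamma ratio, so the remaining poles lie at $\Real w\le-1$. Your contour shifts then go through uniformly in $\beta$. This is exactly the weight the paper uses.
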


This is used in place of the approximate functional equation of \cite[Lemma 2.1]{BMN}, whose weight is smooth but not holomorphic; it is holomorphy in a sector that is used below.

\begin{proof}[Proof of Lemma \ref{lem:afe}]
Let $G(u)=e^{u^2}$, which is even and holomorphic, equals $1$ at $u=0$, and satisfies $|G(u)|=e^{(\Real u)^2-(\Imag u)^2}$.  Put $c_t=(C/N)^{1/2}$, so that $c_t\asymp|\tfrac k2+it|$ uniformly in $k\ge1$ and $|t|\ge1$, and
\[
  I=\frac1{2\pi i}\int_{(2)}\frac{\Lambda(\half+it+u,f)}{\GC(\tfrac k2+it)}\,G(u)\Bigl(\frac X{c_t}\Bigr)^u\,\frac{du}u .
\]
Expanding $L(s,f)$ into its Dirichlet series, which converges absolutely on $\Real(\half+it+u)=\frac52$, and integrating term by term gives
\[
  I=\sum_{n\ge1}\frac{\lambda_f(n)}{n^{\frac12+it}}V\Bigl(\frac nX\Bigr),\qquad
  V(y)=\frac1{2\pi i}\int_{(2)}\gamma_t(u)G(u)y^{-u}\frac{du}u,
\]
where
\[
  \gamma_t(u)=c_t^{-u}\GC(\tfrac k2+it+u)/\GC(\tfrac k2+it).
\]

By Stirling's formula
\[
  \gamma_t(u)\ll(1+|u|)^{B}e^{\pi|\Imag u|/2}
\]
uniformly for $-\frac14\le\Real u\le B$ and $|t|\ge1$, for each fixed $B$, and the poles of $\gamma_t(u)$ lie on $\Real u\le-\frac k2\le-\frac12$.  The integral defining $V(y)$ converges absolutely and locally uniformly for $y$ in the sector $|\arg y|\le\pi/3$, since $|y^{-u}|=|y|^{-\Real u}e^{\Imag u\arg y}$ and $G(u)$ decays like $e^{-(\Imag u)^2}$ on vertical lines, so $V(y)$ is holomorphic there.

Moving the contour to $\Real u=A$ gives $V(y)\ll_A|y|^{-A}$, and moving it to $\Real u=-\frac14$, past the simple pole at $u=0$ whose residue is $1$, gives $V(y)=1+O(|y|^{1/4})$.  Differentiating under the integral sign gives the bounds for $V'$ in the same way.

Now move the contour in $I$ to $\Real u=-2$.  The integrand is holomorphic there apart from the simple pole of $1/u$ at $u=0$, whose residue is $L(\half+it,f)$, since $G(0)=1$.  In the shifted integral apply the functional equation $\Lambda(\half+it+u,f)=\epsilon_fN^{-it-u}\Lambda(\half-it-u,\bar f)$ and substitute $u\mapsto-u$, which is legitimate because $G(u)$ is even and $du/u$ changes sign.  The result is
\[
  \epsilon_fN^{-it}\frac{\GC(\tfrac k2-it)}{\GC(\tfrac k2+it)}\sum_{n\ge1}\frac{\lambda_{\bar f}(n)}{n^{\frac12-it}}W\Bigl(\frac{nX}C\Bigr)
\]
with
\[
  W(y)=\frac1{2\pi i}\int_{(2)}\frac{\GC(\frac k2-it+u)}{\GC(\frac k2-it)}c_t^{-u}G(u)y^{-u}\frac{du}u,
\]
which has the same properties as $V(y)$.  The argument of $W$ is $nX/C$ because $C=Nc_t^2$, so that
\[
  N^u(X/c_t)^{-u}n^{-u}=c_t^{-u}(nX/C)^{-u}.
\]
The identity is exact.

From Section~\ref{sec:endgame} on, on each dyadic height range, $V_t(y)$ denotes the same Mellin integral truncated at the fixed limits $|\Imag u|\le T^\epsilon$, with $0<\epsilon<1/4$; the same convention applies to $W_t$. On every fixed polynomial range $T^{-B}\le|y|\le T^B$ in the sector, the discarded tails and any fixed number of derivatives are $O_{A,B}(T^{-A})$ for every $A>0$. Thus truncation changes the sums used below by a negligible error. The conductor is continued by the branches specified in Section~\ref{sec:transformed}. For $T/2\le\Real t\le3T$ and $|\Imag t|\le T^{1/2}$ the gamma poles stay outside this fixed contour, so the truncated weights are holomorphic in $t$ and satisfy the required bounds on those polynomial ranges.

At $\beta+it$ use the same argument with $\GC(\beta+\frac{k-1}2+it)$ in place of $\GC(\frac k2+it)$ and with the even entire function
\[
 G_{\beta,t}(u)=e^{u^2}\left(1-\frac{u^2}{(\beta+\frac{k-1}2+it)^2}\right)\left(1-\frac{u^2}{(1-\beta+\frac{k-1}2-it)^2}\right)
\]
in place of $G(u)$. It still has value $1$ at zero and the required Gaussian decay. It cancels the first pole of each gamma ratio, including the dual pole for weight one when $\beta$ is close to $1$. All remaining poles lie strictly to the left of $\Real u=-1/4$, uniformly in the stated range. The same two sums therefore have powers $n^{-\beta-it}$ and $n^{-(1-\beta)+it}$, the factor in front of the second being
\[
  \epsilon_fN^{\frac12-\beta-it}\GC(1-\beta-it+\tfrac{k-1}2)/\GC(\beta+it+\tfrac{k-1}2),
\]
of modulus $\asymp|t|^{1-2\beta}$ by Stirling's formula.  The inserted factors have uniformly bounded coefficients for $|t|\ge e^8$, so the contour shifts and derivative estimates prove the stated bounds for both $V(y)$ and $W(y)$.
\end{proof}

\section{The discrete second moment}
\label{sec:second}

\begin{theorem}\label{thm:second}
Let $\{t_\nu\}$ be a set of real numbers in $[T,2T]$ with $|t_\mu-t_\nu|\ge1$ for $\mu\ne\nu$.  Then
\[
  \sum_\nu\bigl|L(\half+it_\nu,f)\bigr|^2\ \ll_f\ T(\log T)^3 .
\]
\end{theorem}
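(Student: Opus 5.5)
The plan is to use the approximate functional equation of Lemma~\ref{lem:afe} with the balanced choice $X=C^{1/2}\asymp T$. Each $L(\half+it_\nu,f)$ then becomes two smoothly weighted Dirichlet polynomials of effective length $\ll T^{1+\epsilon'}$; the second one carries a factor of modulus $1$. By Cauchy's inequality it suffices to bound
\[
  \sum_\nu\Bigl|\sum_n\lambda_f(n)n^{-1/2-it_\nu}V_{t_\nu}(n/X)\Bigr|^2
\]
and the analogous sum for $\bar f$.

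The weights depend on $t_\nu$ through $c_t$ and $X$, so I would first separate the variables. I write $V_t(n/X)$ as its Mellin integral $\frac1{2\pi i}\int_{(\sigma)}\gamma_t(u)G(u)(X/n)^u\,du/u$ on the line $\sigma=1/\log T$. Here $X$ and the gamma ratio contribute only bounded factors, and $G$ gives Gaussian decay in $\Imag u$. Cauchy--Schwarz in $u$ then reduces the problem to
\[
  \int_{(\sigma)}|G(u)|\,\frac{|du|}{|u|}\sum_\nu\Bigl|\sum_{n\le T^{1+\epsilon'}}\lambda_f(n)n^{-1/2-\sigma-it_\nu-i\Imag u}\Bigr|^2 ,
\]
up to a factor $\int|G||du|/|u|\ll\log T$, with the tail $n>T^{1+\epsilon'}$ negligible by the rapid decay of $V$. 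The truncation itself has to be smoothed, for instance by a dyadic partition. The shifted points $t_\nu+\Imag u$ are still $1$-spaced.

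Next I apply the discrete mean value theorem in Gallagher's or Montgomery--Vaughan's form for $1$-spaced points: $\sum_\nu|\sum_{n\le M}a_nn^{-it_\nu}|^2\ll\sum_n(T+n)|a_n|^2$. To avoid a stray $\log$ I do this dyadically in $n\asymp M$. With $a_n=\lambda_f(n)n^{-1/2}$ and Rankin--Selberg, $\sum_{n\asymp M}|\lambda_f(n)|^2\ll_f M$, each block contributes $\ll(T+M)$. Summing over $O(\log T)$ blocks by Cauchy's inequality costs $\log T$ for the number of blocks and gives $T\log T$. Together with the $\log T$ from the $u$-integral, this is $T(\log T)^2$ or $T(\log T)^3$. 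Since only $\ll T(\log T)^3$ is claimed, the bookkeeping has room.

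The main obstacle is the long range $T<n\le T^{1+\epsilon'}$, where the factor $T+n$ would give $T^{1+\epsilon'}$. To handle it I take $\epsilon'$ of size $1/\log T$ in effect: for $n\ge T\log^2T$ the weight $V(n/X)$ is $\ll (n/X)^{-A}$, which kills these terms. So it suffices to truncate at $n\le T(\log T)^{2}$ with an acceptable error, and there $T+n\ll T\log^2T$. That gives only a cruder logarithm, so instead I keep the decay $(X/n)^{\sigma}$ by taking $\sigma=A$ large in the Mellin integral for the dyadic blocks with $M>X$. Each such block then contributes $(T+M)(X/M)^{2A}$, which sums to $O(T)$. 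The dual sum is handled identically with $\bar f$, and this completes the plan.
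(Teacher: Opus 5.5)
Your plan is essentially the paper's proof. It has the same steps: the approximate functional equation with $X=\sqrt C$, Mellin separation of $V_t$ on $\Real u=1/\log T$ below a common length and on a line of large real part for the dyadic tail blocks, Cauchy--Schwarz in $u$, and then Gallagher plus Montgomery--Vaughan with Rankin--Selberg. The paper fixes the common length as the $t$-independent $Y=\sqrt{C(2T)}$, and you need the same so that your blocks do not move with $t_\nu$.

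Your logarithm count needs one correction. The factor $\int|G(u)|\,|du|/|u|$ enters squared. Cauchy's inequality over the $\asymp\log T$ blocks also costs a factor $\log T$ on top of the $T\log T$ from summing them. So with your bound $\log T$ for that integral you would get $T(\log T)^4$. On $\Real u=1/\log T$ the integral is in fact $\ll\log\log T$, which gives $T(\log T)^2(\log\log T)^2\ll T(\log T)^3$, as in the paper.
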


The sum is over every point of the chosen $1$-spaced set; the points need
not be zero ordinates. Its size is not fixed in advance.

\begin{proof}[Proof of Theorem \ref{thm:second}]
We separate the height-dependent weight from a Dirichlet polynomial with
fixed coefficients, apply the discrete mean-value theorem to that
polynomial, and sum the rapidly decaying tails.

By Lemma \ref{lem:afe} with $X=\sqrt{C(t_\nu)}$, $L(\half+it_\nu,f)$ is the sum of
\[
  S(t_\nu)=\sum_n\lambda_f(n)n^{-\frac12-it_\nu}V_{t_\nu}(n/\sqrt{C(t_\nu)})
\]
and a sum of the same shape with $\bar f$ in place of $f$, multiplied by a factor of modulus $1$, and it suffices to treat $S$.  Its coefficients depend on $t$, and we separate the variables through the integral defining $V_t(y)$.

Here $Y$ is a common length, $I_j$ are dyadic ranges in the summation
variable, and $\eta_j$ is the real part of a Mellin contour. These three
symbols are local to this proof. Put $Y=\sqrt{C(2T)}$, so that $\sqrt{C(t)}\le Y\ll_fT$ for $T\le t\le2T$, let $I_0=[1,2Y]$ and $I_j=(2^jY,2^{j+1}Y]$ for $j\ge1$, and let $S_j(t)$ be the part of $S(t)$ with $n\in I_j$.  Put $\eta_0=1/\log T$ and $\eta_j=2$ for $j\ge1$.  Writing $V_t(y)$ as the integral over $\Real u=\eta_j$, which is legitimate since no pole is crossed, and $u=\eta_j+iv$, we have
\[
  S_j(t)=\frac1{2\pi}\int_{-\infty}^{\infty}\gamma_t(u)G(u)\Bigl(\frac{\sqrt{C(t)}}Y\Bigr)^{\eta_j}C(t)^{iv/2}D_j(t+v)\frac{dv}u,
\]
where
\[
  D_j(\tau)=\sum_{n\in I_j}\lambda_f(n)\,Y^{\eta_j}n^{-\frac12-\eta_j-i\tau}
\]
and $\gamma_t(u)$ is as in the proof of Lemma \ref{lem:afe}.

Put
\[
  w_j(v)=(1+|v|)^Be^{\pi|v|/2-v^2}/|\eta_j+iv|,
\]
so that $|\gamma_t(u)G(u)/u|\ll w_j(v)$, $\int w_0\ll\log\log T$ and $\int w_j\ll1$ for $j\ge1$.  Cauchy's inequality gives
\[
  |S_j(t_\nu)|^2\ \ll\ \Bigl(\int w_j\Bigr)\int_{-\infty}^\infty w_j(v)\,|D_j(t_\nu+v)|^2\dd v .
\]

For fixed $v$ the points $t_\nu+v$ are $1$-spaced, and Gallagher's lemma \cite[Lemma 1.4]{Montgomery} and the mean value theorem of Montgomery and Vaughan, applied to the Dirichlet polynomial $D_j(\tau)$, whose coefficients do not depend on $t$, give
\[
  \sum_\nu|D_j(t_\nu+v)|^2\ \ll\ (T+2^{j+1}Y)\,\log(2^{j+1}Y)\,\Sigma_j,
\]
where
\[
  \Sigma_j=\sum_{n\in I_j}|\lambda_f(n)|^2Y^{2\eta_j}n^{-1-2\eta_j},
\]
the logarithm coming from the derivative in Gallagher's lemma.

By Rankin and Selberg, $\sum_{x<n\le2x}|\lambda_f(n)|^2\ll_fx$, so $\Sigma_j$ is $\ll_f\log T$ for $j=0$, since $Y^{2\eta_0}\ll1$, and $\ll_f2^{-4j}$ for $j\ge1$.  Since $Y\ll_fT$, it follows that
\[
  \sum_\nu|S_0(t_\nu)|^2\ll_fT(\log T)^2(\log\log T)^2
\]
and
\[
  \sum_\nu|S_j(t_\nu)|^2\ll_f2^{-3j}(j+\log T)\,T
\]
for $j\ge1$.  Finally
\[
  |S(t)|^2\le2\sum_{j\ge0}2^j|S_j(t)|^2
\]
by Cauchy's inequality, and summing gives $\sum_\nu|S(t_\nu)|^2\ll_fT(\log T)^3$.
\end{proof}

The mean square also gives the familiar large-value estimate: if each of
$R$ sampled values is at least $V$, then $RV^2$ is at most the sum in
Theorem \ref{thm:second}. Two consequences are used below.  Writing $R=R(V,T)$ for the number of $1$-spaced $t_\nu\in[T,2T]$ with $|L(\half+it_\nu,f)|\ge V$,
\begin{equation}\label{eq:R1}
  R\ \ll_f\ T(\log T)^3V^{-2},
\end{equation}
which already gives $R\ll T^{2+\epsilon}V^{-6}$ whenever $V\le T^{1/4}$, so that we may assume
\begin{equation}\label{eq:Vrange}
  T^{1/4+\delta}\le V\ll T^{1/3}\log T,
\end{equation}
the upper bound being \cite[Theorem 1.1]{BMN}; and then, by \eqref{eq:R1} again,
\begin{equation}\label{eq:R2}
  R\ \ll\ T^{1/2-2\delta}\Lc^{3} .
\end{equation}

\section{The reduction}
\label{sec:setup}

Fix $0<\delta<1/1000$ and the even smoothing order $s=2\lceil20/\delta\rceil$.  Write $R=R(V,T)$ for the number of $1$-spaced $t_\nu\in[T,2T]$ with $|L(\half+it_\nu,f)|\ge V$.  By a standard splitting into $O(\Lc)$ dyadic ranges of $V$ and of $T$, Theorem \ref{thm:main} follows from
\begin{equation}\label{eq:target}
  R\ \ll\ T^{2+O(\delta)}V^{-6}
\end{equation}
for every $V$ in the range \eqref{eq:Vrange}, by the usual passage from a bound for the number of large values on a well-spaced set to a bound for the integral.  We may also assume \eqref{eq:R2} and $R\ge1$, since there is nothing to prove when $R=0$.

\subsection{The Farey system}

Only $T$, $R$ and the block length $M$ are independent. The other scales
will be used as follows:
\[
\begin{array}{c|c|l}
\text{symbol}&\text{value}&\text{role}\\ \hline
Q&(TR)^{2/3}&\text{common dual length}\\
T^\delta Q&&\text{cutoff for the initial segment}\\
K&(M/Q)^{1/2}&\text{Farey order on a block}\\
H&MQ/T&\text{smoothing width}\\
Z_*&T/Q&\text{interval for fixing the partition}\\
D&v_j\asymp D&\text{a dyadic denominator range, }D\ll K\\
Z&\max(D^2T/(4M),1)&\text{interval for the mean-square argument}
\end{array}
\]
The two height intervals serve different purposes: $Z_*$ fixes the
partition for a whole block, while $Z$ reflects the denominators in one
class. Neither is a further parameter to choose.

Following \cite[Section~4.4]{Jutila}, the system of fractions is defined in terms of $T$ rather than $t$, so that it is the same for every $t_\nu$, and its order is a function of the fraction.  Put $M(r,t)=t/2\pi r$ and
\begin{equation}\label{eq:M0}
  Q=(TR)^{2/3},\qquad
  T^{2/3+\delta}\le T^\delta Q\ll T^{1-\delta/3}\Lc^2.
\end{equation}
by \eqref{eq:R2}.  For $2^i\le r<2^{i+1}$, $i\in\Z$, let
\begin{equation}\label{eq:Kr}
  K(r)=M(2^i,T)^{1/2}\,T^{-1/3}R^{-1/3},
\end{equation}
which is $\asymp M(r,T)^{1/2}T^{-1/3}R^{-1/3}$ within a factor $\sqrt2$, and let $\Fc$ be the set of reduced $r=h/k$ with
\[
  T^{-\delta}\le r,\qquad k\le K(r),\qquad M(r,T)\ge T^\delta Q .
\]
The last condition is $K(r)\gg T^{\delta/2}$, and $\Fc$ is finite.  Jutila takes $K(r)$ to be $M(r,T)^{1/2}T^{-1/3}R^{-1/3}$ itself.  We take it constant on dyadic ranges so that the following holds without exception.

\begin{lemma}\label{lem:farey}
Consecutive members $r<r'$ of $\Fc$ are Farey neighbours, $h'k-hk'=1$, and if $\rho$ is their mediant then $|r-\rho|\asymp1/(kK(r))$ and $|r'-\rho|\asymp1/(k'K(r'))$.
\end{lemma}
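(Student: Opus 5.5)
The plan is to reduce Lemma~\ref{lem:farey} to the classical theory of Farey sequences of a fixed order, using the fact that $K(r)$ is constant on each dyadic range $[2^i,2^{i+1})$. On such a range $\Fc$ is exactly the set of reduced fractions with denominator at most $K_i:=K(2^i)$ lying in $[2^i,2^{i+1})$, intersected with the interval cut out by $r\ge T^{-\delta}$ and $M(r,T)\ge T^\delta Q$. The constraint $M(r,T)\ge T^\delta Q$ is an upper bound $r\le T/(2\pi T^\delta Q)$, so $\Fc$ is the set of elements of a union of consecutive dyadic windows, each with its own order. It is finite because $K(r)\asymp (T/r)^{1/2}T^{-1/3}R^{-1/3}$ is bounded and the range of $r$ is bounded.

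\emph{Consecutive members inside one window.} Within a dyadic range the members are consecutive terms of the Farey sequence $\Fc_{K_i}$ restricted to an interval. The standard fact then gives $h'k-hk'=1$ and $k+k'>K_i$. The mediant is $\rho=(h+h')/(k+k')$, so
\[
 r'-r=\frac1{kk'},\qquad \rho-r=\frac1{k(k+k')},\qquad r'-\rho=\frac1{k'(k+k')}.
\]
Since $K_i<k+k'\le 2K_i$ and $K(r)=K(r')=K_i$, this yields $|r-\rho|\asymp1/(kK(r))$ and $|r'-\rho|\asymp 1/(k'K(r'))$.

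\emph{Consecutive members across a dyadic boundary.} This is the main obstacle: a priori the last fraction of order $K_{i-1}$ below $2^i$ and the first of order $K_i$ above $2^i$ need not be Farey neighbours. I would exploit that $2^i$ is an integer when $i\ge0$, or has denominator $2^{-i}$ when $i<0$, and in the relevant range it lies in $\Fc$ itself. The condition $K(r)\gg T^{\delta/2}$ gives $K_i\ge 2^{-i}$ when $i<0$, because $r\ge T^{-\delta}$ and $K(r)$ grows as $r$ decreases; I would check this inequality from \eqref{eq:Kr}. The left endpoint $2^i$ of each window then belongs to $\Fc$. Its neighbour above in $\Fc_{K_i}$ is a Farey neighbour, and its neighbour below in $\Fc_{K_{i-1}}$ is also a Farey neighbour, since $2^i$ also has denominator at most $K_{i-1}\ge K_i$. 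So every consecutive pair lies in a single closed window of constant order, and the computation above applies.

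For such pairs, one of $K(r),K(r')$ may be $K_{i-1}=\sqrt2K_i$ rather than $K_i$. This changes the constants only by $\sqrt2$, and the bound $k+k'\asymp K_i$ still holds, so the $\asymp$ claims survive.

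The remaining bookkeeping is to verify the boundary cases. At the two ends of $\Fc$, namely at $T^{-\delta}$ and at $r\approx T^{1-\delta}/(2\pi Q)$, the lemma only concerns pairs both in $\Fc$, so truncation causes no difficulty. Nothing there breaks neighbourliness, since any two consecutive terms of a Farey sequence of fixed order are neighbours.
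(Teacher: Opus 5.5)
Your route is the paper's: a fixed order $\lfloor K_i\rfloor$ on each dyadic window, the classical neighbour identities inside a window, and the observation that each junction $2^i$ lies in both adjacent Farey systems. A straddling pair are then neighbours in the system of order $\lfloor K_{i-1}\rfloor$, and the factor $K_{i-1}/K_i=\sqrt2$ costs only constants.

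The gap is in the step you singled out as the main obstacle: the inequality $2^{-i}\le K_i$ at junctions with $i<0$. The reason you give does not prove it. The condition $K(r)\gg T^{\delta/2}$ is just the upper cutoff $M(r,T)\ge T^\delta Q$ rewritten, since $K(r)\asymp(M(r,T)/Q)^{1/2}$. It settles $i\ge0$, where the junction has denominator $1$. For $i<0$, however, the denominator $2^{-i}$ can be as large as $T^\delta$. The monotonicity also runs the wrong way: $K_i$ grows only like $2^{-i/2}$, so $K_i2^i$ is smallest at the bottom of the range. Because $K_i$ depends on $R$ through \eqref{eq:Kr}, no argument using only $r\ge T^{-\delta}$ and \eqref{eq:Kr} can bound $K_i2^i$ from below.

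The missing input is the bound \eqref{eq:R2}, $R\ll T^{1/2-2\delta}\Lc^3$, which is exactly what the paper uses at this point. For $2^i\ge T^{-\delta}$ and large $T$ it gives
\[
  K_i2^i=\Bigl(\frac{T2^i}{2\pi}\Bigr)^{1/2}T^{-1/3}R^{-1/3}\ \gg\ T^{1/6-\delta/2}R^{-1/3}\ \gg\ T^{\delta/6}\Lc^{-1}>1 ,
\]
so the integer $2^{-i}$ is at most $\lfloor K_i\rfloor$.

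This input is not cosmetic. Suppose instead $K_i<E:=2^{-i}\le K_{i-1}-1$ at a junction inside the range. Then $1/E\notin\Fc$, and the members of $\Fc$ adjacent to $1/E$ are $1/(E+1)$ below and $1/\lfloor K_i\rfloor$ above. Their determinant $E+1-\lfloor K_i\rfloor$ is at least $2$, so they are not neighbours. With \eqref{eq:R2} supplied, the rest of your argument goes through as written.
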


\begin{proof}
Within a dyadic range $[2^i,2^{i+1})$ the members of $\Fc$ are the Farey fractions of order $\lfloor K_i\rfloor$, $K_i=K(2^i)$, restricted to that range and to the two end conditions defining $\Fc$. Consecutive members are Farey neighbours with $k+k'>K_i$, and $\rho-r=1/(k(k+k'))\asymp1/(kK_i)$.

At a junction $2^i$ in the allowed range, its reduced denominator is $1$ if $i\ge0$, and $2^{-i}$ otherwise. In the latter case \eqref{eq:R2} and $2^i\ge T^{-\delta}$ give
\[
  K_i2^i=(T2^i/2\pi)^{1/2}T^{-1/3}R^{-1/3}\gg T^{\delta/6}\Lc^{-1}>1
\]
for large $T$. Thus the junction belongs to both adjacent Farey systems. If $r<2^i\le r'$ are consecutive, then $r'=2^i$ and the pair are neighbours in the system of order $\lfloor K_{i-1}\rfloor$. Their determinant is $1$, and $k+k'\asymp K_{i-1}\asymp K_i$. The two end conditions merely truncate this ordered system, so do not affect the assertion.
\end{proof}

Extend $\Fc$ by one Farey neighbour of the appropriate order at each end, for the sole purpose of defining the two extreme mediants.  For $t\in[T,2T]$ and $y<0$ put $h_t(y)=-t/2\pi y$, so that $h_t(-r)=M(r,t)$.

To match the conventions of \cite{BMN}, list the members of $\Fc$ as $\alpha_j=-u_j/v_j$ in increasing order, $j=1,\dots,J$, so that $h_t(\alpha_j)$ increases with $j$, and let $\rho_j$ be the negative of the mediant of $u_j/v_j$ and $u_{j+1}/v_{j+1}$, with $\rho_0$ and $\rho_J$ the two extreme mediants.  Thus $\rho_{j-1}<\alpha_j<\rho_j$ and, by Lemma \ref{lem:farey},
\[
  |\alpha_j-\rho_{j-1}|\asymp|\alpha_j-\rho_j|\asymp1/(v_jK(u_j/v_j)).
\]

\subsection{Blocks and the partition of unity}

For $M$ a power of two let
\[
  \Bc(M)=\{j:M\le h_T(\alpha_j)<2M\}.
\]
It is empty unless $T^\delta Q/2\le M\le T^{1+\delta}$.  For $j\in\Bc(M)$ put
\begin{equation}\label{eq:KH}
  K=\sqrt{M/Q},\qquad H=\frac{MQ}{T},\qquad K^2H=\frac{M^2}{T}.
\end{equation}
so that $K(u_j/v_j)\asymp K$ (indeed $K\le K(u_j/v_j)<2K$), $v_j\ll K$, and $Q$ is the same for every block.

Thus $Q=M/K^2$ is the dual length, and $H$ is the smoothing width.
In the notation of \cite[Section~3.2]{BMN}, our $K$ is their $R$ and our $Q$
is their $M_0$. By \eqref{eq:M0}
\begin{equation}\label{eq:KH2}
  \frac{K^2}{H}=\frac T{Q^2}\ =\ T^{-1/3}R^{-4/3}\ \le\ T^{-1/3},\qquad K\gg T^{\delta/2},\qquad K\le M^{1/2}T^{-1/3}.
\end{equation}

Let $\omega(x)$ be the function with parameter $H$ which is $0$ for $x\le-H$, $1$ for $x\ge H$, and
\[
  \frac12+\frac12c_s\int_0^{x/H}(1-u^2)^s\,du
\]
between, where $c_s^{-1}=\int_0^1(1-u^2)^s\,du$.  We write $\omega_{H'}$ when the parameter is $H'$.

This replaces the function of \cite[(3.8)]{BMN}, which is $\frac12(1+\sin^{s+1}(\pi x/2H))$ between $-H$ and $H$ and is therefore only $C^1$ at $x=\pm H$, and discontinuous at $x=-H$ when $s$ is odd.  Our $\omega(x)$ is $C^s$ on $\R$ and satisfies $\omega^{(i)}\ll_sH^{-i}$, and on $[-H,H]$ it is a polynomial whose extension is $O_s(1)$ on $|z|\le2H$.  These are the properties of $\omega(x)$ used in \cite[Section~4]{BMN}.

Divide $[T,2T]$ into intervals $I_q=[T+qZ_*,T+(q+1)Z_*)$ of length
\begin{equation}\label{eq:Zstar}
  Z_*=T/Q=K^2T/M,
\end{equation}
and for $t\in I_q$ write $t_q=T+qZ_*$.  For each $q$ define
\begin{gather*}
  N_j^{(q)}=\bigl\lfloor h_{t_q}(\rho_j)+\tfrac12\bigr\rfloor,\qquad H_j'=\bigl\lceil h_T(\rho_j)Q/T\bigr\rceil,\\
  \omega_j^{(q)}(x)=\omega_{H'_{j-1}}\bigl(x-N^{(q)}_{j-1}\bigr)-\omega_{H'_{j}}\bigl(x-N^{(q)}_{j}\bigr).
\end{gather*}
For $j\in\Bc(M)$ the two widths $H'_{j-1},H'_j$ are $\asymp H$, and
\[
  \sum_{j=1}^J\omega^{(q)}_j(x)=\omega_{H_0'}(x-N^{(q)}_0)-\omega_{H'_J}(x-N^{(q)}_J)
\]
telescopes.

This is the construction of \cite[Section~3.2]{BMN} with three changes. First, the partition is that of the fixed system $\Fc$ rather than of the Farey fractions of a single order in an interval depending on $t$. Secondly, every boundary $N_j^{(q)}$, including the first and last, is at a mediant. Thirdly, the partition is frozen on $I_q$.

The estimates \cite[(3.6), (3.7)]{BMN} hold for it in the form
\begin{equation}\label{eq:37}
  N^{(q)}_j-h_t(\alpha_j)\asymp\frac{HK}{v_j},\qquad h_t(\alpha_j)-N^{(q)}_{j-1}\asymp\frac{HK}{v_j}\qquad(j\in\Bc(M),\ t\in I_q),
\end{equation}
uniformly in $t\in I_q$. At $t=t_q$ this is their computation verbatim, using only $|\rho_j-\alpha_j|\asymp1/(v_jK)$ from Lemma \ref{lem:farey}, and for $t\in I_q$ one has
\[
  |h_t(\alpha_j)-h_{t_q}(\alpha_j)|\le Z_*\cdot2M/T=2K^2,
\]
which is $O(HT^{-1/3})$ by \eqref{eq:KH2}.

\begin{remark}\label{rem:boundary}
The reason for insisting on mediant endpoints is the following. In this paragraph $K$ denotes the Farey order (written $R$ in \cite{BMN}).  The proof of \cite[Lemma 4.3]{BMN}, which supplies the derivative bounds for the summand $F_j(x)$ on which the whole of their Section~4 rests, uses $x_0=h(\alpha_j)+O(HK/v_j)$ for $x_0$ in the support of $\omega_j$, citing ``the estimates in Section 3.2''.  Those estimates give $N_j-h(\alpha_j)\asymp HK/v_j$ for $1\le j\le J-1$ and $h(\alpha_j)-N_{j-1}\asymp HK/v_j$ for $2\le j\le J$, that is for the interior pieces.

Their endpoints $N_0=M_1+2H$ and $N_J=M_2-2H$ are arbitrary, being inherited from the partial summation in their (3.2), and for the first piece $h(\alpha_1)-N_0$ can be as large as
\[
  h(\alpha_1)-h(\alpha_0)\asymp M^2/(tv_0v_1),
\]
where $\alpha_0=-u_0/v_0$ is the Farey fraction preceding their interval.  This exceeds $HK/v_1$ by the factor $K/v_0$.  On the support of that piece the quantity $ty/x+2\pi\alpha_jy$ bounded in their proof is then not bounded, and the Cauchy estimate fails.

The same estimates are used for the boundary pieces at \cite[(4.11)]{BMN} and \cite[(4.18)]{BMN}.  The gap is in the reduction only, and it closes if \cite[(3.2)]{BMN} is carried out with a smooth dyadic partition whose endpoints are at mediants, as in the set-up used here; their Theorem 1.1 is unaffected.  
\end{remark}

\subsection{The dichotomy}

Apply Lemma \ref{lem:afe} with $X=\sqrt C$.  If $|L(\half+it_\nu,f)|\ge V$ then one of the two sums exceeds $V/3$ in modulus.  We suppose it is the first, for at least half of the points, the other case being identical with $\bar f$ in place of $f$ and $-t$ in place of $t$.  Let $t_\nu\in I_q$.  By the telescoping,
\begin{multline*}
  \sum_{n\ge1}\frac{\lambda_f(n)}{n^{\frac12+it_\nu}}V\Bigl(\frac n{\sqrt C}\Bigr)
  =\sum_{n\ge1}\frac{\lambda_f(n)}{n^{\frac12+it_\nu}}V\Bigl(\frac n{\sqrt C}\Bigr)\bigl(1-\omega_{H'_0}(n-N^{(q)}_0)\bigr)\\
  +\sum_{M}\sum_{j\in\Bc(M)}P_j(t_\nu)+\sum_{n\ge1}\frac{\lambda_f(n)}{n^{\frac12+it_\nu}}V\Bigl(\frac n{\sqrt C}\Bigr)\omega_{H'_J}(n-N^{(q)}_J),
\end{multline*}
where
\begin{equation}\label{eq:Pj}
  P_j(t)=\sum_{n\ge1}\frac{\lambda_f(n)}{n^{\frac12+it}}V_t\Bigl(\frac n{\sqrt C}\Bigr)\omega^{(q)}_j(n)\qquad(t\in I_q).
\end{equation}
The last sum is supported on $n\ge N_J^{(q)}-H'_J\gg T^{1+\delta}$, where $V\ll T^{-A\delta}$, and is negligible.  So are the blocks with $M\ge\sqrt C\,T^{\delta/20}$, for the same reason, and there are $O(\Lc)$ blocks in all.  Hence either
\begin{equation}\label{eq:alt1}
  \Bigl|\sum_{n\ge1}\frac{\lambda_f(n)}{n^{\frac12+it_\nu}}V\Bigl(\frac n{\sqrt C}\Bigr)\bigl(1-\omega_{H'_0}(n-N^{(q)}_0)\bigr)\Bigr|\ \ge\ \frac V{6}
\end{equation}
for $\gg R$ of the points, or there is a block $M$ with
\begin{equation}\label{eq:alt2}
  T^\delta Q/2\le M\le\sqrt C\,T^{\delta/20}\le T^{1+\delta/16},\qquad\Bigl|\sum_{j\in\Bc(M)}P_j(t_\nu)\Bigr|\ \gg\ V\Lc^{-1}
\end{equation}
for $\gg R\Lc^{-1}$ of the points.

\subsection{The initial segment}

\begin{lemma}[Variable endpoints]\label{lem:maximal}
Let $T\ge2$, let $1\le N\le T^2$ be an integer, and let $t_1,\ldots,t_R$ be $1$-spaced
in an interval of length $T$. Suppose $a_n$ is supported on $1\le n\le N$,
$|a_n|\le T^B$ for a fixed $B>0$, and put $G=\sum_n|a_n|^2$.
If for every $\nu$ there is $x_\nu\le N$ with
\[
 \left|\sum_{n\le x_\nu}a_n n^{-it_\nu}\right|\ge W\ge T^{-B},
\]
then, for every $\eta>0$,
\[
 R\ll_{B,\eta}T^\eta\bigl(NGW^{-2}+TNG^3W^{-6}\bigr).
\]
The same conclusion holds for coefficients supported in a subinterval
of $[1,N]$.
\end{lemma}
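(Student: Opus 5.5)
The plan is to reduce to Huxley's large-value theorem for Dirichlet polynomials with fixed coefficients. The variable endpoint $x_\nu$ is removed by Perron's formula, or equivalently by an integral representation of the sharp cutoff, at a cost of $T^\eta$.

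\emph{Removing the endpoint.} For half-integer $x$ I would write the truncated Perron formula
\[
 \sum_{n\le x}a_nn^{-it}=\frac1{2\pi}\int_{-U}^{U}\Bigl(\sum_{n\le N}a_nn^{-i(t+v)}\Bigr)\frac{x^{iv}}{iv+\kappa}\,x^{\kappa}\dd v+O\bigl(\text{tail}\bigr),
\]
with $\kappa=1/\log N$ and the $n^{-\kappa}$ factor absorbed into the coefficients. I take $U=T^{C}$ with $C$ large in terms of $B$, so the error is below $W/2$ because $|a_n|\le T^B$ and $W\ge T^{-B}$. Splitting $|v|\le U$ dyadically and applying pigeonhole, and then discretising $v$ on a grid of spacing $1$ in each dyadic range (with Gallagher's inequality, or a Sobolev-type bound, handling the derivative at a further cost $\log$), I would get the following. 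For each $\nu$ there is a shift $v_\nu$ with
\[
 \Bigl|\sum_n b_n n^{-i(t_\nu+v_\nu)}\Bigr|\gg W T^{-\eta}.
\]
Here $b_n=a_nn^{-\kappa}$, or a smoothed version, so that $\sum|b_n|^2\le G$, and the same $v$-range is used for $\gg RT^{-\eta}$ of the $\nu$.

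The main obstacle is that the shifted points $t_\nu+v_\nu$ may lie in an interval of length $U\gg T$, and may not be $1$-spaced. Spacing is recovered at cost $T^\eta$ by keeping a maximal $1$-spaced subset: each unit interval contains $O(1)$ points after grouping by $\nu$, since each $\nu$ gives one point. The length is the more serious problem, because it would turn $T$ into $T^C$ in Huxley's term. To avoid this I would restrict $|v|\le T^{\eta}$ by using a smoothed cutoff in place of $\mathbf 1_{n\le x}$. Concretely, I replace the sharp sum by a sum against a weight equal to $1$ on $n\le x-\Delta$ and $0$ beyond $x$, with $\Delta=xT^{-\eta}$. Its Mellin transform decays beyond $|v|\ll T^{\eta}$. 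The discrepancy is a short sum over $x-\Delta<n\le x$, which I bound by reapplying the argument at a smaller scale. Alternatively, I would bound it directly through the mean value theorem: averaging over the $R$ points, its mean square is $\ll (T+N)G\,T^{-\eta}$, which is dominated by the $NGW^{-2}$ term unless negligible.

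With the shifts confined to $|v|\le T^\eta$, the points lie in an interval of length $T^{1+\eta}$. Huxley's large-value theorem, in the form $R\ll (N G W^{-2}+T N G^3W^{-6})T^{\eta}$ for a Dirichlet polynomial of length $N$ with $\sum|b_n|^2=G$ at $1$-spaced points in an interval of length $T$, then gives the claim. The same bound with $T$ replaced by $T^{1+\eta}$ is harmless after renaming $\eta$. For coefficients supported in a subinterval nothing changes, since $G$ and $N$ only enter as upper bounds.
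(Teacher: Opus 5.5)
\textbf{There is a genuine gap}, at the step you yourself call the main obstacle. Once you choose a shift $v_\nu$ separately for each $\nu$, the points $t_\nu+v_\nu$ are no longer a translate of the original configuration. Your repair by smoothing then leaves the discrepancy
\[
E_\nu=\sum_{x_\nu-\Delta_\nu<n\le x_\nu}a_n\bigl(1-w_\nu(n)\bigr)n^{-it_\nu}
\]
uncontrolled, for three reasons.
\begin{enumerate}
\item $E_\nu$ again has a $\nu$-dependent sharp endpoint and $\nu$-dependent coefficients. So ``its mean square over the $R$ points'' is not the mean value of one fixed Dirichlet polynomial, and the mean value theorem does not apply as stated.
\item The saving $T^{-\eta}$ presumes $\sum_{x-\Delta<n\le x}|a_n|^2\le T^{-\eta}G$. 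This is false for general coefficients, which may be concentrated near $x_\nu$.
\item Even granting $(T+N)GT^{-\eta}$, the resulting count $(T+N)GW^{-2}T^{-\eta}$ is not dominated by the target. Take $T>N$ and $W^2=N^{3/4}G$, which is below the trivial bound $NG$. Then $TGW^{-2}=TN^{-3/4}$, whereas $NGW^{-2}=N^{1/4}$ and $TNG^3W^{-6}=TN^{-5/4}$. This is exactly the range where Huxley's theorem beats the mean value theorem, and $T^{-\eta}$ cannot recover a factor $\min(T/N,N^{1/2})$.
\end{enumerate}
``Reapplying the argument at a smaller scale'' does not help either. A cutoff smoothed over width $\Delta'<xT^{-\eta}$ has Mellin transform spread over $|v|\ll x/\Delta'$, so iterating down to unit scale pushes the shifts back up to $T^{O(1)}$. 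A smaller point: distinct $\nu$ can land in the same unit interval after shifting. The spacing loss is therefore $T^\eta$ because $|v_\nu|\le T^\eta$, not because each unit interval holds $O(1)$ points.

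The missing idea is to keep the shift common to all points and never choose individual shifts. In
\[
\sum_{n\le x_\nu}a_nn^{-it_\nu}=\frac1{2\pi}\int_{-U_0}^{U_0}\sum_{n\le N}a_nn^{-c-i(t_\nu+u)}\,\frac{x_\nu^{c+iu}}{c+iu}\dd u+O(T^{-B-2}),
\]
only the factor $x_\nu^{c+iu}$ depends on $\nu$, and it is $O(1)$. So sum over $\nu$ first:
\[
RW\ll\log T\cdot\sup_u\sum_\nu|D_c(t_\nu+u)|.
\]
The length $U_0=T^{10B+20}$ of the $u$-range costs only a logarithm. For each fixed $u$ the points $t_\nu+u$ are a common translate, still $1$-spaced in an interval of length $T$, and the translation can be absorbed into the coefficients without changing $G$. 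What you then need is a first-moment form of Huxley's theorem, uniform in the common shift. Order the values $|D(t_\nu+u)|$ decreasingly and apply the counting bound to the $j$-th largest. Summing $j^{-1/2}$ and $j^{-1/6}$ gives
\[
\sum_{\nu\le R}|D(t_\nu+u)|\ll T^\eta\bigl((NG)^{1/2}R^{1/2}+(TNG^3)^{1/6}R^{5/6}\bigr).
\]
Solving $RW\ll T^{\eta}(\cdots)$ then gives the lemma, with no smoothing and no control of individual shifts. This is the paper's proof.
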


\begin{proof}
Split $1\le n\le N$ into dyadic intervals. Huxley's large-value theorem
\cite[Lemma 4.3]{Jutila}, applied on each interval and with the resulting
logarithmic factors absorbed into $T^\eta$, gives
\[
 \#\{\nu:|D(t_\nu)|\ge U\}
 \ll_\eta T^\eta\bigl(NGU^{-2}+TNG^3U^{-6}\bigr),
 \qquad D(t)=\sum_{n\le N}a_n n^{-it}.
\]
This estimate is uniform under a common translation of the heights:
the translation can be absorbed into the coefficients without changing $G$.
Order the $R$ values of $|D|$ decreasingly. Applying the displayed
estimate to the $j$-th value and summing $j^{-1/2}$ and $j^{-1/6}$ gives
\begin{equation}\label{eq:huxLone}
 \sum_{\nu\le R}|D(t_\nu+u)|
 \ll_\eta T^\eta\bigl((NG)^{1/2}R^{1/2}
              +(TNG^3)^{1/6}R^{5/6}\bigr),
\end{equation}
uniformly in the common shift $u$.

Replace $x_\nu$ by a half-integer defining the same partial sum.
Set $c=1/\log(2N)$ and $U_0=T^{10B+20}$. Truncated Perron's formula
for the finite polynomial gives
\[
 \sum_{n\le x_\nu}a_n n^{-it_\nu}
 =\frac1{2\pi}\int_{-U_0}^{U_0}
       \sum_{n\le N}a_n n^{-c-i(t_\nu+u)}
       \frac{x_\nu^{c+iu}}{c+iu}\dd u+O(T^{-B-2}).
\]
Indeed $|\log(x_\nu/n)|\gg N^{-1}$, so the error is
$O(T^BN^2/U_0)$. Also $x_\nu^c\ll1$ and
$\int_{-U_0}^{U_0}|c+iu|^{-1}\dd u\ll_B\log T$.
Sum over $\nu$, apply \eqref{eq:huxLone} to $a_nn^{-c}$ (whose squared
norm is at most $G$), and absorb logarithms into $T^\eta$. We obtain
\[
 RW\ll_\eta T^\eta\bigl((NG)^{1/2}R^{1/2}
              +(TNG^3)^{1/6}R^{5/6}\bigr).
\]
One term is at least half the left side. Squaring or taking sixth powers,
and starting with a smaller $\eta$, proves the assertion.
\end{proof}

Suppose \eqref{eq:alt1}.  The weight
\[
  w_\nu(n)=V_{t_\nu}(n/\sqrt C)(1-\omega_{H'_0}(n-N_0^{(q)}))
\]
vanishes for $n>N_0^{(q)}+H_0'\ll T^\delta Q$, is $O(1)$, and has total variation $O(1)$ by the bound for $V'$ in Lemma \ref{lem:afe}, so by partial summation there is $u_\nu\ll T^\delta Q$ with $|\sum_{n\le u_\nu}\lambda_f(n)n^{-\frac12-it_\nu}|\gg V$.

Apply Lemma \ref{lem:maximal} with $N\ll T^\delta Q$ and
$a_n=\lambda_f(n)n^{-1/2}$, so that $G\ll_f\log T$ by Rankin--Selberg.
It gives
\[
  R\ \ll\ \bigl(T^\delta QV^{-2}+TT^\delta QV^{-6}\bigr)T^{2\delta}\ \ll\ T^\delta QV^{-2}T^{2\delta},
\]
the second term being dominated by the first because $V\ge T^{1/4+\delta}$.  Substituting \eqref{eq:M0},
\[
  R\ \ll\ T^{2/3+3\delta}R^{2/3}V^{-2},\qquad\text{so}\qquad R\ \ll\ T^{2+9\delta}V^{-6},
\]
which is \eqref{eq:target}.

\begin{remark}\label{rem:32}
This is the point at which \cite[(3.2)]{BMN} cannot be used as it stands.  That reduction discards the terms $n\le T^\delta Q$ by absolute values, at the cost of $O_{k,N}(\sqrt{T^\delta Q}(\log T^\delta Q)^{-\delta})$, which by \eqref{eq:M0} is $T^{1/3+\delta/2}R^{1/3}$ and so exceeds every $V$ permitted by \eqref{eq:Vrange}.  The initial segment must be kept, and the dependence of the order \eqref{eq:Kr} on $R$, which is the one structural difference between the sixth moment and the subconvexity argument, is exactly what makes it deliver the target rather than something weaker.
\end{remark}

There remains the alternative \eqref{eq:alt2}.  It is here that the machinery of \cite{BMN} is applied, in place of \cite[Theorem 4.2]{Jutila}, and the rest of the argument is devoted to it.  Fix the block $M$ from now on.  All constants may depend on $\delta$ but not on $M$.

\section{The error terms}
\label{sec:book}

\begin{proposition}\label{prop:book}
Let $V\ge T^{1/4+\delta}$ and $T^\delta Q\le T^{1-\epsilon}$.  Then each of the two error terms of \cite[(3.12)]{BMN}, after division by $\sqrt M$, is either $O(VT^{-\delta})$ or, if not, forces the number $R$ of points $t_\nu\in[T,2T]$ with $|L(\half+it_\nu,f)|\ge V$ to satisfy $R\ll T^{2+6\delta}V^{-6}$.
\end{proposition}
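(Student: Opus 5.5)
Recall the shape of the two error terms in \cite[(3.12)]{BMN}. In the transformation of a piece $\sum_n\lambda_f(n)n^{-it}\omega_j(n)$ by \cite[Proposition 3.1]{BMN}, one error comes from the tail of the dual sum beyond the cutoff $\ell\gg T^{O(\delta)}Q$, and the other from the non-stationary-phase remainder together with the boundary pieces. The plan is to bound the first outright and to reduce the second to a large-value count for a short Dirichlet polynomial.

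\emph{The first error term.} Summed over $j\in\Bc(M)$ and divided by $\sqrt M$, it is bounded using the derivative bounds of \cite[Lemma 4.3]{BMN}. These bounds are valid here for every piece, including the extreme ones, because of the mediant endpoints \eqref{eq:37}. Repeated integration by parts with the smoothing order $s=2\lceil20/\delta\rceil$ gives decay $(H\,\ell/(v_jK\ldots))^{-s}$ beyond the dual length. The loss is at most a fixed power of $T$, since there are $\ll K^2$ fractions. So for $\ell\ge T^{\delta}Q$ this is $O(T^{-A})$, which is certainly $O(VT^{-\delta})$. This uses only $T^\delta Q\le T^{1-\epsilon}$ (so that $H\ge T^{\epsilon}$-ish and the smoothing gains) and \eqref{eq:KH2}. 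I would check that the bound is uniform in $t\in I_q$, since the partition is frozen.

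\emph{The second error term.} In \cite{BMN} this term is bounded pointwise by something like $\sqrt{M}\,T^{\delta}(K/\sqrt{H}+\dots)$. Divided by $\sqrt M$, it has size roughly $T^{O(\delta)}(M/T)^{1/2}\cdot$ (powers of $K,H$), and via \eqref{eq:KH} and $Q=(TR)^{2/3}$ it can be written as $T^{O(\delta)}T^{1/3}R^{-\theta}$ for some $\theta>0$. Then there are two cases. If this is $\le VT^{-\delta}$, the first alternative of the proposition holds. If not, $V\ll T^{1/3+O(\delta)}R^{-\theta}$, which rearranges to $R\ll T^{1/(3\theta)+O(\delta)}V^{-1/\theta}$. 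With $\theta=1/6$ this is exactly $R\ll T^{2+6\delta}V^{-6}$; if $\theta>1/6$, I would combine it with $V\ge T^{1/4+\delta}$ to reach the same conclusion.

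The main obstacle is step two: identifying exactly which expression in \cite[(3.12)]{BMN} remains after division by $\sqrt M$, and verifying that its dependence on $R$ through $Q$ is strong enough, i.e.\ $\theta\ge1/6$. If the pointwise bound depends on $K$ and $H$ in a way that gives a weaker $\theta$, I would not estimate the term absolutely. Instead I would note that it is itself a short smooth sum over $n$ of length $\ll T^{\delta}Q$ (its boundary contribution) with bounded-variation weights. Then I would apply Lemma~\ref{lem:maximal} exactly as in the treatment of \eqref{eq:alt1}, which gives $R\ll T^{3\delta}QV^{-2}$ and hence $R\ll T^{2+9\delta}V^{-6}$, adjusting $\delta$ as needed.
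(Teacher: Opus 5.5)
Your case split (either the term is $O(VT^{-\delta})$, or solving for $R$ gives the target) is the right skeleton. However, the step that carries the proof is missing, and the step you propose in its place goes the wrong way.

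After summing over $j\in\Bc(M)$ as in \cite[(3.16)]{BMN} and dividing by $\sqrt M$, the two error terms are explicit: $Q^{1/(2(s-1))}$ and $M^2K^2/H^3$. By \eqref{eq:KH} the second is exactly $T^3Q^{-4}=T^{1/3}R^{-8/3}$, so $\theta=8/3$; you leave $\theta$ undetermined. Your account of the first term is also inaccurate. With a finite smoothing order the BMN term is not $O(T^{-A})$ but a small positive power, $Q^{1/(2(s-1))}\le T^{1/10}$ for $s\ge6$. It is $O(VT^{-\delta})$ precisely because $V\ge T^{1/4+\delta}$, which is where that hypothesis is actually used.

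For the second term, with $\theta>1/6$ the inequality $T^{1/(3\theta)}V^{-1/\theta}\le T^2V^{-6}$ is equivalent to $V\le T^{1/3}$. So combining with the lower bound $V\ge T^{1/4+\delta}$ cannot work; you need an upper bound on $V$. The paper obtains it by a case split, without appealing to the Weyl bound:
\begin{itemize}
\item If $V\ge T^{1/3+\delta}$, then $T^{1/3}R^{-8/3}\le T^{1/3}\le VT^{-\delta}$, because $R\ge1$.
\item Otherwise, failure of the first alternative gives $R^{8/3}\ll T^{1/3+\delta}V^{-1}$. Hence $R\ll T^{1/8+3\delta/8}V^{-3/8}=T^{2+3\delta/8}V^{-6}\cdot V^{45/8}T^{-15/8}\le T^{2+6\delta}V^{-6}$.
\end{itemize}
This split is also why Theorem \ref{thm:block} needs no upper bound on $W$.

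Your fallback through Lemma \ref{lem:maximal} is not available. The error terms of \cite[(3.12)]{BMN} are majorants produced by the stationary-phase analysis, not Dirichlet polynomials in $n$, so there is nothing to feed into a large-value theorem. Also, the sharp-endpoint term $\max(H,M^{3/5})$ of \cite[(3.16)]{BMN}, which your ``boundary pieces'' suggest, does not arise here, since every piece is smoothly weighted.
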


By Proposition \ref{prop:ext}, applied with $\epsilon=\delta/4$ and truncation $Q$ (so that $\sqrt T\ll Q\le T^\delta Q\le T^{1-\delta/4}$ by \eqref{eq:M0} and $M\le T^{1+\delta/8}$ by \eqref{eq:alt2}) and with $s>16/\delta$, each piece $P_j(t)$ with $j\in\Bc(M)$ and $t\in I_q$ is transformed by \cite[Proposition 3.1]{BMN} in the form stated there, the partition being $\{\omega_j^{(q)}\}$, since the hypotheses of that proposition on the partition are \eqref{eq:37}, $v_j\ll K$ and the further properties listed in Appendix \ref{sec:evidence}.

Summing over $j\in\Bc(M)$ as in \cite[(3.16)]{BMN}, the error term is
\[
  E=M^{-1/2}\Bigl(\sqrt M\bigl(M/K^2\bigr)^{\frac1{2(s-1)}}+\frac{M^{5/2}K^2}{H^3}\Bigr)
  =Q^{\frac1{2(s-1)}}+\frac{T^3}{Q^4},
\]
by \eqref{eq:KH}, the logarithmic saving of \cite{BMN} being taken to be $0$.  The third term $\max(H,M^{3/5})$ of their (3.16), which is the cost of the sharp endpoints in their (3.9), does not arise, since here every piece is smoothly weighted.

\begin{proof}[Proof of Proposition \ref{prop:book}]
The first term is at most $T^{\frac1{2(s-1)}}\le T^{1/10}$ for $s\ge6$, and is $O(VT^{-\delta})$ by \eqref{eq:Vrange}.  The second is the main term.  By \eqref{eq:KH} it is $T^3Q^{-4}=T^{1/3}R^{-8/3}$.  If $V\ge T^{1/3+\delta}$ this is at most $T^{1/3}\le VT^{-\delta}$.  Otherwise either it is $O(VT^{-\delta})$, or else
\[
  R^{8/3}\ll T^{1/3+\delta}V^{-1},
\]
whence
\[
  R\ll T^{1/8+3\delta/8}V^{-3/8};
\]
and, since $V<T^{1/3+\delta}$,
\[
  T^{1/8+3\delta/8}V^{-3/8}=T^{2+3\delta/8}V^{-6}\cdot V^{45/8}T^{-15/8}\ \le\ T^{2+6\delta}V^{-6}.
\]
So in the second case \eqref{eq:target} already holds.
\end{proof}

\begin{remark}
In \cite{Jutila} the corresponding total error is $T^{1/3}R^{-2/3}\Lc^2$ \cite[p.\ 112]{Jutila}, and the term above is smaller by a factor $R^2$.
\end{remark}

We may therefore assume that for $\gg R\Lc^{-1}$ points $t_\nu$, with $t_\nu\in I_q$,
\begin{equation}\label{eq:alt3}
  \Bigl|\sum_{j\in\Bc(M)}S_j(t_\nu)\Bigr|\ \gg\ V\Lc^{-1},
\end{equation}
where $S_j(t)$ is the explicit part of the transform of $P_j(t)$, written out in Section~\ref{sec:transformed}.

\section{The two-height mean-square estimate}
\label{sec:endgame}

We now bound the transformed part left by \eqref{eq:alt3}. First we
separate the fractions so that different phases are distinguishable.
Next we pass from the sampled heights to a discrete mean square of the
individual transformed sums. Proposition \ref{prop:sieve} bounds that
mean square, and Section~\ref{sec:exponent} substitutes $Q=(TR)^{2/3}$ to
finish the proof. This follows Jutila's mean-value argument
\cite[pp.\ 112--122]{Jutila}, with the level-dependent steps written out.

\subsection{The transformed sums}
\label{sec:transformed}

The level contributes only finitely many arithmetic choices. Put
\[
 N^\flat=N\prod_{p\mid N}p^{-1},\qquad
 B(N^\flat)=\{b/N^\flat:0\le b<N^\flat\}.
\]
For $\alpha_j=-u_j/v_j$, set
\[
 d_j=\prod_{\substack{p\mid v_j\\\operatorname{ord}_p(v_j)<\operatorname{ord}_p(N)}}
 p^{\operatorname{ord}_p(v_j)},\qquad q_j=v_j/d_j.
\]
Choose $0\le c_j<d_j$ with $q_jc_j\equiv-u_j\pmod{d_j}$,
and put $a_j=(u_j+c_jq_j)/d_j$. Then
$-u_j/v_j=-a_j/q_j+c_j/d_j$ and $c_j/d_j\in B(N^\flat)$.
For a character $\chi$ modulo $N$, $\bar f^{\chi}$ denotes the primitive
twist of $\bar f$ by $\chi$. Its normalized coefficients, including the
local factors at primes dividing $N$, satisfy Deligne's bound.
These are the arithmetic data in the transformation of \cite{BMN}.

Fix $q$ and $t\in I_q$, and let $j\in\Bc(M)$.  Write $\alpha_j=-u_j/v_j=-a_j/q_j+\beta_j$ as in \cite[Section~3.2]{BMN}, with $\beta_j=c_j/d_j\in B(N^\flat)$, $v_j=d_jq_j$, $(a_j,q_j)=1$, and for $\beta=c/d$ and $r\mid NN^\flat$ let $J(\beta,r)$ be the set of $j$ with $\beta_j=\beta$ and $(q_j,r)=1$.  Proposition \ref{prop:ext} gives
\begin{multline}\label{eq:transform}
  P_j(t)=\sum_{\beta\in B(N^\flat)}\sum_{\substack{r\mid NN^\flat\\ j\in J(\beta,r)}}\sum_{\chi\ (\mathrm{mod}\ N)}\sum_{\pm}(\mp1)^kc(f,r,\chi;j)\\
  \times\sum_{\ell\ge1}\lambda_{\bar f^{\chi}}(\ell)\,\e\Bigl(\frac{b_j\ell}{q_j}\Bigr)\,\omega^{(q)}_j\bigl(x^\pm_j(\ell/r;t)\bigr)\,\tilde h^\pm_j(\ell/r;t)\,\e\bigl(g^\pm_j(\ell/r;t)\bigr)+E_j(t),
\end{multline}
where $\e(x)=e^{2\pi ix}$, $c(f,r,\chi;j)\ll_N1$, the $E_j$ are the error terms disposed of in Section~\ref{sec:book}, and, for $\beta=c/d$ and $y_j=d/2ru_jq_j$,
\begin{align}
  x^\pm_j(\ell;t)&=\Bigl(\frac d{2u_j}\Bigr)^2\Bigl(\sqrt{\ell+\frac{2tu_jq_j}{\pi d}}\pm\sqrt\ell\Bigr)^2,\label{eq:xj}\\
  g^\pm_j(\ell;t)&=-\frac t{2\pi}\log x^\pm_j(\ell;t)+\frac{u_j}{v_j}x^\pm_j(\ell;t)\mp\frac2{q_j}\sqrt{\ell x^\pm_j(\ell;t)}+\frac18\mp\frac18,\label{eq:gj}\\
  \tilde h^\pm_j(\ell;t)&=h^\pm_j(\ell;t)\,x^\pm_j(\ell;t)^{-1/2}\,V_t\bigl(x^\pm_j(\ell;t)/\sqrt C\bigr),\label{eq:hj}\\
  h^\pm_j(\ell;t)&=\Bigl(\frac{q_jt\sqrt\ell}{\pi x^\pm_j(\ell;t)^{3/2}}\pm\frac\ell{x^\pm_j(\ell;t)}\Bigr)^{-1/2},\notag
\end{align}
these being \cite[(3.13)--(3.15)]{BMN} with the dependence on $t$ made explicit and the amplitude of Proposition \ref{prop:ext} inserted.  The summand in \eqref{eq:transform} vanishes unless $x^\pm_j(\ell/r;t)$ lies in the support of $\omega^{(q)}_j$, which by the choice of $K_1$ in \cite[Section~4.2, Step 3]{BMN} forces $\ell\le K_1rd^{-2}\ll Q$.

Here $a_j=(u_j+cq_j)/d$ is the residue in \cite[(3.11)]{BMN}, and $b_j$ is the inverse of $ra_j$ modulo $q_j$, with $0\le b_j<q_j$. This is the additive twist in their transformation formula. Since $(r,q_j)=1$, equality of two such inverses at a fixed denominator is equivalent to equality of the original residues; with $c,d$ fixed, the latter is equivalent to equality of $u_j$ modulo $v_j$.

Put $g^\pm_{jr}(\ell;t)=g^\pm_j(\ell/r;t)+b_j\ell/q_j$.  From \cite[Sections~5.1 and~5.2]{BMN}, whose computation is for fixed $t$ and is unchanged,
\begin{equation}\label{eq:dl}
  \frac{\partial}{\partial\ell}g^\pm_{jr}(\ell;t)=\frac{b_j}{q_j}-y_j\mp F_j(\ell;t),\qquad F_j(\ell;t)=\sqrt{y_j^2+\frac{ty_j}{\pi\ell}},
\end{equation}
and since $x^\pm_j(\ell;t)$ is a stationary point of the phase whose value is $g^\pm_j(\ell;t)$,
\begin{equation}\label{eq:dt}
  \frac{\partial}{\partial t}g^\pm_{jr}(\ell;t)=-\frac1{2\pi}\log x^\pm_j(\ell/r;t).
\end{equation}
For $j\in\Bc(M)$ with $v_j\asymp\Kc$ one has $u_j/v_j\asymp T/M$, hence $u_jv_j\asymp\Kc^2T/M$ and $y_j\asymp M/\Kc^2T$, with constants depending on $N$; and for $\ell\ll Q$,
\begin{equation}\label{eq:theta}
  \frac{\pi\ell y_j}{t}\ \ll\ \frac{QM}{\Kc^2T^2}=\frac{M^2}{K^2\Kc^2T^2}\ \ll\ T^{-\delta/2}
\end{equation}
by \eqref{eq:R2} and \eqref{eq:alt2}, since
\[
  M^2/(K^2T^2)=MT^{2/3}R^{2/3}T^{-2}\le T^{-4\delta/3+\delta/16}\Lc^{2},
\]
so that $F_j(\ell;t)\asymp\sqrt{ty_j/\ell}$, and by \eqref{eq:xj}
\[
  x^\pm_j(\ell/r;t)=h_t(\alpha_j)\bigl(1+O(\sqrt{\pi\ell y_j/t})\bigr),
\]
the relative error being $O(M/K\Kc T)=O(T^{-\delta/4})$.

Finally, $\tilde h^\pm_j(\ell/r;t)=\ell^{-1/4}\kappa_j(\ell;t)$ where, by the computation of $h^\pm_{jr}$ in \cite[Section~5]{BMN} and the bound $V_t\ll1$,
\begin{equation}\label{eq:kappa}
  |\kappa_j(\ell;t)|\ \ll\ \Kc^{-1/2}M^{1/4}T^{-1/2},\qquad \Bigl|\frac{\partial}{\partial\ell}\kappa_j(\ell;t)\Bigr|\ll\frac{\Kc^{-1/2}M^{1/4}T^{-1/2}}{\ell},
\end{equation}
the second because every factor of $\kappa_j(\ell;t)$ other than the one coming from $V_t(y)$ has logarithmic derivative $O(1/\ell)$ in $\ell$, while $\partial_\ell[V_t(x^\pm_j(\ell/r;t)/\sqrt C)]\ll M^{-1}\partial_\ell x^\pm_j\ll1/\ell$ by Cauchy's estimate for $V_t(y)$ on a disc of radius $\asymp M$ in the sector.

Moreover $\kappa_j(\ell;t)$, like $g^\pm_{jr}(\ell;t)$, extends to a holomorphic function of $t$ in $|\Imag t|\le T^{1/2}$ satisfying the same bounds, being built from $\sqrt{\ell+2tu_jq_j/\pi d}$, powers, the ratio of gamma factors in the truncated integral $V_t(y)$ of Lemma \ref{lem:afe}, and $\sqrt{C(t)}$ continued as
\[
  \pi^{-1}\sqrt N\bigl((\tfrac{k+1}2)^2+t^2\bigr)^{1/4}\bigl((\tfrac{k+3}2)^2+t^2\bigr)^{1/4},
\]
none of which has a singularity in that strip.  The absent factor $M^{-1/2}$ of Jutila's normalisation $n^{-1/2-it}$ is what makes \eqref{eq:kappa} agree with his amplitude $(hk)^{-1/4}t^{-1/2}\asymp\Kc^{-1/2}M^{1/4}T^{-1/2}$.

\subsection{Separating the Farey fractions}\label{sec:separate}

The sum over $j\in\Bc(M)$ in \eqref{eq:alt3} is split into classes.  There are $O_N(1)$ choices of $(\beta,r,\chi,\pm)$.  The denominators $v_j$ are put into $O(\Lc)$ dyadic ranges $[\Kc,2\Kc)$.  The fractions with $v_j$ in such a range, which are at least $(2\Kc)^{-2}$ apart, are listed in order and put into $\lceil T^\delta\rceil$ classes by the residue of their position, so that within a class $|\alpha_i-\alpha_j|\ge\Kc^{-2}T^\delta$.  The integers $u_jv_j$, which lie in an interval of length $\ll\Kc^2T/M$, are put into $O(1+M/T)$ subintervals of length $c_1\Kc^2T^2/M^2$, with $c_1=c_1(N)$ small, and then into $\ll T^\delta$ further classes in which they are distinct, which is possible because an integer $m\le T^2$ has $d(m)\ll T^\delta$ divisors.

Altogether there are $\ll T^{3\delta}$ classes.  By the pigeonhole principle there is a class $\Jc$, with its $(\beta,r,\chi,\pm)$, such that
\begin{equation}\label{eq:thin}
  \Bigl|\sum_{j\in\Jc}S_j(t_\nu)\Bigr|\ \gg\ VT^{-3\delta}
\end{equation}
for a subset $\mathcal T$ of at least $\gg RT^{-3\delta}$ of the points, where now $S_j(t)$ denotes the single term of \eqref{eq:transform} belonging to that $(\beta,r,\chi,\pm)$.  We record the properties of $\Jc$.
\begin{enumerate}
\item[(S0)] $\Jc\subset J(\beta,r)\cap\Bc(M)$ for one $\beta=c/d$ and one $r$, so $d$ and $r$ are the same for all $j\in\Jc$;
\item[(S1)] $\Kc\le v_j<2\Kc$, and $|\alpha_i-\alpha_j|\ge\Kc^{-2}T^\delta$ for $i\ne j$ in $\Jc$;
\item[(S2)] the integers $u_jv_j$, $j\in\Jc$, are distinct and lie in an interval of length at most $c_1\Kc^2T^2/M^2$.
\end{enumerate}
From now on absorb the fixed factor $(\mp1)^kc(f,r,\chi;j)$ into $\kappa_j$; its bounds retain implied constants depending on $N$. Here $\Kc\ll K$, and $\#\Jc\ll\Kc^2T/M+1$ by (S1).  (S1) is Jutila's (4.4.33).  (S2) is his (4.4.34), the length of the interval being what makes the correction $y_i-y_j$ in \eqref{eq:dl} smaller than the separation of the rationals $b_i/q_i$, Lemma \ref{lem:sep} below.

\subsection{From the points to a discrete mean square}
\label{sec:passage}

Let
\begin{equation}\label{eq:Z}
  Z=\max\bigl(\Kc^2T/4M,\,1\bigr),
\end{equation}
which is at most $Z_*$ since $\Kc\le v_j\le K(u_j/v_j)<2K$ and $Z_*=T^{1/3}R^{-2/3}\ge1$.

Divide each $I_q$ into intervals of length $Z$, the last possibly
shorter, and retain every interval meeting $\mathcal T$. There are
$P\le\#\mathcal T\le R$ such intervals. No assumption on the number
of points in one interval is needed.

Fix an occupied interval $[t_0,t_0+Z)\subset I_q$ (or its shortened last
piece). For $|\Real t-t_0|\le5Z$ and $|\Imag t|\le Z$, define
\begin{equation}\label{eq:frozen}
  \widetilde S_j(t)=\sum_{\ell\ge1}\lambda_{\bar f^{\chi}}(\ell)\ell^{-1/4}
  w_j(\ell)\kappa_j(\ell;t)\e\bigl(g^\pm_{jr}(\ell;t)\bigr),
  \qquad w_j(\ell)=\omega^{(q)}_j\bigl(x^\pm_j(\ell/r;t_0)\bigr).
\end{equation}
Only the cutoff is frozen at $t_0$; the analytic amplitude and phase
still depend on $t$.

For $t_\nu$ in the interval,
\[
  |x^\pm_j(\ell/r;t_\nu)-x^\pm_j(\ell/r;t_0)|\ll ZM/T\ll\Kc^2+T^{\delta/8},
\]
so by \eqref{eq:KH2}
\[
  |w_j(\ell)-\omega^{(q)}_j(x^\pm_j(\ell/r;t_\nu))|\ll H^{-1}(\Kc^2+T^{\delta/8})\ll T^{-1/3+\delta/8},
\]
since $H\ge Q^2/2T\ge\frac12T^{1/3}$, and only for $\ell\ll Q$.  Hence by \eqref{eq:kappa} and $\#\Jc\ll\Kc^2T/M+1$
\begin{multline*}
  \Bigl|\sum_{j\in\Jc}S_j(t_\nu)-\sum_{j\in\Jc}\widetilde S_j(t_\nu)\Bigr|\ \ll\ T^{-1/3+\delta/8}\,\#\Jc\,Q^{3/4}\Kc^{-1/2}M^{1/4}T^{-1/2+\delta}\\ \ll\ T^{-1/3+9\delta/8}\bigl(\Kc^{3/2}M^{-3/4}T^{1/2}+\Kc^{-1/2}M^{1/4}T^{-1/2}\bigr)Q^{3/4}\ \ll\ T^{1/6+2\delta},
\end{multline*}
using $\Kc\ll K$, $Q=M/K^2$ and $M\le T^{1+\delta/8}$.  This is $o(VT^{-3\delta})$ by \eqref{eq:Vrange}, so \eqref{eq:thin} holds with $\widetilde S_j(t)$ in place of $S_j(t)$.  We freeze the weights because $\widetilde S_j(t)$ is holomorphic in $t$, whereas $\omega^{(q)}_j(x^\pm_j(\ell/r;t))$ is not.

Gallagher's inequality, applied to the $1$-spaced points of
$\mathcal T$ in this interval, gives
\begin{multline}\label{eq:gallagher}
  \sum_\nu\Bigl|\sum_{j\in\Jc}\widetilde S_j(t_\nu)\Bigr|^2\ \ll\ \int_{-1}^{Z+1}\Bigl|\sum_{j\in\Jc}\widetilde S_j(t_0+u)\Bigr|^2\dd u\\
  +\Bigl(\int_{-1}^{Z+1}\Bigl|\sum_j\widetilde S_j\Bigr|^2\dd u\Bigr)^{1/2}\Bigl(\int_{-1}^{Z+1}\Bigl|\sum_j\widetilde S_j'\Bigr|^2\dd u\Bigr)^{1/2},
\end{multline}
where $'$ is $d/dt$.

Choose a smooth function $0\le\eta\le1$ equal to $1$ on
$[-1,Z+1]$, supported in $(-Z-1,2Z+1)$, and satisfying
$\eta^{(a)}\ll_a Z^{-a}$ for each integer $a\ge0$.
Then the first integral is at most
\[
  \int\eta(u)|\sum_j\widetilde S_j(t_0+u)|^2\dd u=\sum_{i,j\in\Jc}\int\eta\,\widetilde S_i\overline{\widetilde S_j}\dd u,
\]
and we dispose of the terms $i\ne j$.

\begin{lemma}\label{lem:offdiag}
For $i\ne j$ in $\Jc$, $\ell,\ell'\ll Q$ and real $u$ with $|u|\le5Z$, the function
\[
  f(u)=g^\pm_{ir}(\ell;t_0+u)-g^\pm_{jr}(\ell';t_0+u)
\]
satisfies
\[
  |f'(u)|\asymp|\log(\alpha_i/\alpha_j)|\gg T^\delta/Z,
\]
and this persists for complex $u$ with $|\Real u|\le5Z$ and $|\Imag u|\le Z$.
\end{lemma}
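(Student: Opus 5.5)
By \eqref{eq:dt} the derivative is explicit:
\[
  f'(u)=-\frac1{2\pi}\log\frac{x^\pm_i(\ell/r;t_0+u)}{x^\pm_j(\ell'/r;t_0+u)} .
\]
So the plan is to compare each stationary point with $h_t(\alpha)$ and then use the separation (S1). By the expansion after \eqref{eq:theta}, for real $t=t_0+u$ in the range,
\[
  x^\pm_i(\ell/r;t)=h_t(\alpha_i)\bigl(1+O(M/K\Kc T)\bigr),\qquad x^\pm_j(\ell'/r;t)=h_t(\alpha_j)\bigl(1+O(M/K\Kc T)\bigr).
\]
Since $h_t(\alpha)=-t/2\pi\alpha$, the factor $t$ cancels in the ratio, and
\[
  2\pi f'(u)=\log(\alpha_i/\alpha_j)+O\bigl(M/K\Kc T\bigr).
\]
The sign of $\log(\alpha_i/\alpha_j)$ is immaterial; we work with its modulus.

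Next we bound the main term from below. Both $\alpha_i,\alpha_j$ lie in $-[T/4\pi M,T/2\pi M]$ up to constants, because $j\in\Bc(M)$ means $h_T(\alpha_j)\asymp M$. Hence
\[
  |\log(\alpha_i/\alpha_j)|\asymp\frac{M}{T}\,|\alpha_i-\alpha_j|\ge\frac{M}{T}\,\Kc^{-2}T^\delta
\]
by (S1). When $Z=\Kc^2T/4M$ this is $\gg T^\delta/Z$. When $Z=1$ we have $\Kc^2T/M<4$, so $M\Kc^{-2}T^{\delta}/T\gg T^\delta=T^\delta/Z$ as well. This gives $|\log(\alpha_i/\alpha_j)|\gg T^\delta/Z$.

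The remaining point, and the main one to check, is that the error $M/(K\Kc T)$ is dominated by the main term. It suffices to show
\[
  \frac{M}{K\Kc T}\ll T^{-\delta}\,\frac{M}{\Kc^2T},
\]
that is, $\Kc\ll T^{-\delta}K$. This can fail when $\Kc$ is close to $K$. In that case I would sharpen the expansion. Rather than bounding the two errors separately, write $x^\pm(\ell/r;t)=h_t(\alpha)\phi_\pm(\pi\ell y/t)$ with $\phi_\pm(z)=(\sqrt{1+z}\pm\sqrt z)^2\cdot(\text{normalisation})$, using \eqref{eq:xj}. The ratio then contributes $\log\phi_\pm(z_i)-\log\phi_\pm(z_j)$. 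This is $O(\sqrt{z})\ll M/(K\Kc T)$, and by \eqref{eq:theta} it is at most $T^{-\delta/2}\cdot M/(K\Kc T)\cdot(K\Kc T/M)\cdot$ a small factor. More precisely, \eqref{eq:theta} gives $\sqrt z\ll M/(K\Kc T)\ll T^{-\delta/4}$.

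I expect the honest bound to use the full separation of different fractions in $\Jc$, namely $|\alpha_i-\alpha_j|\ge\Kc^{-2}T^\delta$. Thus the main term is $\gg MT^{\delta}/(\Kc^2T)$ against an error $\ll M/(K\Kc T)\le M/(\Kc^2T)$, using $\Kc\ll K$. This leaves a ratio of at least $T^\delta$. So the error is absorbed and $|f'(u)|\asymp|\log(\alpha_i/\alpha_j)|$. The inequality $\Kc\ll K$ is what resolves the worry above.

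Finally, the complex $u$. For $|\Real u|\le5Z\le5Z_*$ and $|\Imag u|\le Z$, the quantity $t$ changes relative to $T$ by $O(Z/T)$, which is tiny. The expressions $\sqrt{\ell+2tu_jq_j/\pi d}$ and $\log x^\pm$ are holomorphic there, since the radicand has positive real part $\asymp\Kc^2T^2/M^2\cdot(\dots)$. The expansion of $x^\pm$ holds with the same relative errors plus $O(Z/T)$. Because $t$ cancels exactly in $h_t(\alpha_i)/h_t(\alpha_j)$, the same two-sided estimate persists. Alternatively, one can use Cauchy's estimate on $f''(u)=O(1/T)$, which is small compared with $T^\delta/Z$ over a distance of $Z$.
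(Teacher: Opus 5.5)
Your proposal follows the paper's own proof: \eqref{eq:dt} and the expansion $x^\pm=h_t(\alpha)\bigl(1+O(M/K\Kc T)\bigr)$ reduce $2\pi f'$ to $\log(\alpha_i/\alpha_j)$, (S1) with $Z\ge\Kc^2T/4M$ gives the lower bound, the error is smaller by the factor $\Kc/(KT^\delta)\ll T^{-\delta}$, and the complex case follows from holomorphy with $t$-derivative $O(1/T)$ together with $Z\le T^{1/3}$. Your intermediate worry that you need $\Kc\ll T^{-\delta}K$ came from dropping the factor $T^\delta$ supplied by (S1), so the $\phi_\pm$ ``sharpening'' paragraph is unnecessary and should simply be deleted.
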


\begin{proof}
By \eqref{eq:dt},
\[
  f'(u)=-\frac1{2\pi}\log\bigl(x^\pm_i(\ell/r;t)/x^\pm_j(\ell'/r;t)\bigr)
\]
with $t=t_0+u$.  By \eqref{eq:xj}, $x^\pm_j(\ell/r;t)=h_t(\alpha_j)(1+O(\sqrt{\pi\ell y_j/t}))$, and by \eqref{eq:theta} the relative error is $O(M/K\Kc T)$.  So
\[
  f'(u)=\frac1{2\pi}\log(\alpha_i/\alpha_j)+O(M/K\Kc T).
\]

Since $|\alpha_i|\asymp T/M$ and $|\alpha_i-\alpha_j|\ll T/M$,
\[
  |\log(\alpha_i/\alpha_j)|\asymp|\alpha_i-\alpha_j|M/T\ge\Kc^{-2}T^\delta M/T
\]
by (S1), which is $\ge T^\delta/4Z$ by \eqref{eq:Z}; and the error is smaller than this by the factor $\Kc/KT^\delta\ll T^{-\delta}$.  For complex $u$ with $|\Imag u|\le Z$ the function $\log(x^\pm_i/x^\pm_j)$ is holomorphic with derivative $O(1/T)$ in $t$, so it changes by $O(Z/T)$, which is $o(T^\delta/Z)$ because $Z\le Z_*=T^{1/3}R^{-2/3}\le T^{1/3}$.
\end{proof}

Consider an off-diagonal term.  Expanding both sums it is a sum over $\ell,\ell'\ll Q$ of terms
\[
  \lambda_{\bar f^{\chi}}(\ell)\overline{\lambda_{\bar f^{\chi}}(\ell')}(\ell\ell')^{-1/4}w_i(\ell)w_j(\ell')\int\eta(u)G(u)\e(f(u))\dd u,
\]
with $f$ as in Lemma \ref{lem:offdiag} and
\[
  G(u)=\kappa_i(\ell;t_0+u)\kappa^*_j(\ell';t_0+u),
\]
where $\kappa^*_j(\ell;t)=\overline{\kappa_j(\ell;\bar t)}$ is holomorphic and agrees with $\overline{\kappa_j(\ell;t)}$ for real $t$.  Here $G(u)$ is holomorphic for $|\Real u|\le5Z$, $|\Imag u|\le Z$, and $O(\Kc^{-1}M^{1/2}T^{-1})$ there by \eqref{eq:kappa}, and the phase $f$ is real for real $u$. On the real axis the analytic continuation agrees with the original conjugate product.

For completeness, the required non-stationary-phase bound follows by
integration by parts. Put $A_0=\Kc^{-1}M^{1/2}T^{-1}$ and
$\lambda=|\log(\alpha_i/\alpha_j)|$. Lemma \ref{lem:offdiag} and
Cauchy's estimate on a slightly smaller complex neighbourhood give
\[
 |G^{(a)}|\ll_a A_0Z^{-a},\qquad
 |(1/f')^{(a)}|\ll_a\lambda^{-1}Z^{-a}.
\]
Integrating $J$ times using $\e(f)=(2\pi i f')^{-1}(\e(f))'$,
with no boundary terms because $\eta$ is smooth and compactly supported,
therefore gives
\[
 \left|\int\eta(u)G(u)\e(f(u))\dd u\right|
 \ll_J A_0Z(\lambda Z)^{-J}
 \ll_J A_0Z T^{-J\delta}.
\]
Taking $J=\lceil20/\delta\rceil$ makes this $O(T^{-10})$.

Summing over $\ell,\ell'$ and over $i\ne j$ in $\Jc$, the off-diagonal contributes $O(T^{-3})$, which is negligible, and the same applies to the integral involving $\widetilde S'_j$ in \eqref{eq:gallagher}, whose differentiated amplitude is $\partial_t\kappa_j+2\pi i(\partial_tg^\pm_{jr})\kappa_j$. Cauchy's estimate on a slightly larger strip and \eqref{eq:dt} bound it by $O(\Kc^{-1/2}M^{1/4}T^{-1/2}\Lc)$, without dividing by $\kappa_j$.  Hence
\begin{multline*}
  \int_{-1}^{Z+1}\Bigl|\sum_{j\in\Jc}\widetilde S_j(t_0+u)\Bigr|^2\dd u\ \ll\ \sum_{j\in\Jc}\int_{-Z-1}^{2Z+1}|\widetilde S_j(t_0+u)|^2\dd u+T^{-3}\\
  \le\ 3(Z+1)\max_{|t-t_0|\le3Z}\sum_{j\in\Jc}|\widetilde S_j(t)|^2+T^{-3},
\end{multline*}
and likewise for $\widetilde S'_j$. Choose maximising points $t_p,t_p'$
for each occupied interval. Summing \eqref{eq:gallagher} over those
intervals, and applying Cauchy's inequality to the mixed terms, gives
\[
 RV^2T^{-9\delta}\ll Z\Bigl\{
 \sum_{p,j}|\widetilde S_j(t_p)|^2
 +\Bigl(\sum_{p,j}|\widetilde S_j(t_p)|^2\Bigr)^{1/2}
  \Bigl(\sum_{p,j}|\widetilde S'_j(t_p')|^2\Bigr)^{1/2}\Bigr\}.
\]
Here the left side uses $\#\mathcal T\gg RT^{-3\delta}$ and
\eqref{eq:thin}; the negligible errors have been absorbed.
Since $Z\ll\Kc^2TM^{-1}T^{\delta/8}$, taking square roots gives
\begin{multline}\label{eq:4441}
 RV\ll R^{1/2}\Kc M^{-1/2}T^{1/2+5\delta}
 \Bigl\{\sum_{p,j}|\widetilde S_j(t_p)|^2\\
 +\Bigl(\sum_{p,j}|\widetilde S_j(t_p)|^2\Bigr)^{1/2}
  \Bigl(\sum_{p,j}|\widetilde S'_j(t_p')|^2\Bigr)^{1/2}\Bigr\}^{1/2}.
\end{multline}
The sums run over $j\in\Jc$ and the $P\le R$ occupied intervals.
The selected heights lie in $[T/2,3T]$.

Each $t_p$ lies within $3Z$ of the left end of its interval, and each $I_q$ contains at most one interval shorter than $Z$, so if the intervals are split into sixteen classes according to the residue modulo $16$ of their position in $[T,2T]$, the $t_p$ belonging to one class satisfy $|t_p-t_{p'}|\ge Z$ for $p\ne p'$, and similarly the $t'_p$.  We apply what follows to each class separately and add, at the cost of a constant.  In each $\widetilde S_j(t_p)$ the weight $w_j(\ell)$ is the one frozen at the left end of the interval containing $t_p$, so it depends on $p$ as well as on $j$.  Nothing below uses more than that it is bounded and of bounded variation.
\subsection{The two-height large sieve}
\label{sec:sieve}

\begin{proposition}\label{prop:sieve}
Let $\Jc$ satisfy \textup{(S0)--(S2)}, let $Z$ be as in \eqref{eq:Z}, and let $t_1,\dots,t_P\in[T/2,3T]$ satisfy $|t_p-t_{p'}|\ge Z$ for $p\ne p'$.  Let $1\le L\ll Q$, let $A>0$, and for each $p$ and $j$ let $\kappa_{p,j}(\ell)$, $L\le\ell\le2L$, be complex numbers with $|\kappa_{p,j}(\ell)|\le A$ and $\sum_{\ell}|\kappa_{p,j}(\ell+1)-\kappa_{p,j}(\ell)|\le A$.  Then
\[
  \sum_{p\le P}\sum_{j\in\Jc}\Bigl|\sum_{L\le\ell\le2L}\lambda_{\bar f^{\chi}}(\ell)\ell^{-1/4}\kappa_{p,j}(\ell)\,\e\bigl(g^\pm_{jr}(\ell;t_p)\bigr)\Bigr|^2
  \ \ll\ A^2L^{1/2}T^{\delta}\bigl(\Kc^{-1}K^{-1}M+KM^{-1/2}PT\bigr).
\]
\end{proposition}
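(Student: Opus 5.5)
We prove the proposition by duality, in the form of Bombieri's version of Hal\'asz's inequality. Write $\xi_{p,j}(\ell)=\lambda_{\bar f^{\chi}}(\ell)\ell^{-1/4}\kappa_{p,j}(\ell)$ for $L\le\ell\le2L$. Bombieri's inequality bounds the left side by
\[
\Bigl(\sum_{\ell}|\xi(\ell)|^2\Bigr)\max_{(p,j)}\sum_{(p',j')}\Bigl|\sum_{L\le\ell\le2L}\e\bigl(g^\pm_{jr}(\ell;t_p)-g^\pm_{j'r}(\ell;t_{p'})\bigr)\Bigr|,
\]
where the coefficients $\kappa_{p,j}(\ell)$ have been removed from the inner sums by partial summation; this costs a factor $A$ because their total variation is at most $A$. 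One needs care here, since the coefficients depend on the pair $(p,j)$. I would first extract $\kappa_{p,j}$ by partial summation into the dual vectors, writing the large sieve with coefficients $\lambda(\ell)\ell^{-1/4}$ and test vectors $\kappa_{p,j}(\ell)\e(g)$. Alternatively, one can apply Bombieri with the kernel $\sum_\ell\kappa_{p,j}\bar\kappa_{p',j'}\e(\Phi)$ and use partial summation on that, which again gives a factor $A^2$. Rankin--Selberg gives $\sum_\ell|\lambda(\ell)|^2\ell^{-1/2}\ll L^{1/2}$. It then suffices to show, uniformly in $(p,j)$,
\[
\sum_{(p',j')}\Bigl|\sum_{\ell\sim L}\e(\Phi(\ell))\Bigr|\ll T^\delta\bigl(\Kc^{-1}K^{-1}M+KM^{-1/2}PT\bigr),\qquad \Phi=g^\pm_{jr}(\cdot;t_p)-g^\pm_{j'r}(\cdot;t_{p'}).
\]

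By \eqref{eq:dl}, $\Phi'(\ell)=b_j/q_j-b_{j'}/q_{j'}-(y_j-y_{j'})\mp(F_j(\ell;t_p)-F_{j'}(\ell;t_{p'}))$, and $\Phi''=\mp\partial_\ell(F_j-F_{j'})$ with $\partial_\ell F\asymp-\sqrt{ty}\,\ell^{-3/2}$. This is the exact computation referred to as Lemma~\ref{lem:deriv}. I would split the pairs into four cases.

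\emph{Diagonal.} For $(p',j')=(p,j)$ the trivial bound $L\ll Q=M/K^2$ is dominated by $\Kc^{-1}K^{-1}M$, since $\Kc\ll K$.

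\emph{Same fraction, different heights.} When $j'=j$ and $p'\ne p$, the derivative is $\Phi'=\mp(F_j(\ell;t_p)-F_j(\ell;t_{p'}))$. It is monotone, of size $\asymp|t_p-t_{p'}|\sqrt{y_j/(T\ell)}$, and $|\Phi''|\asymp|t_p-t_{p'}|\sqrt{y_j/T}\,L^{-3/2}$. When $|\Phi'|\le1/2$ the Kusmin--Landau bound gives $\ll1/|\Phi'|$; otherwise the second-derivative test gives $L|\Phi''|^{1/2}+|\Phi''|^{-1/2}$. Summing over $t_{p'}$ spaced by $Z$ (so $|t_p-t_{p'}|\asymp mZ$ with $m\ge1$), the first bound summed over $m$ gives $\ll T^\delta\sqrt{TL/y_j}/Z\asymp\Kc^{-1}K^{-1}M$ up to $T^\delta$. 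Here I use $y_j\asymp M/\Kc^2T$, $Z\ge\Kc^2T/4M$ and $L\ll M/K^2$. The second-derivative terms, summed over at most $P$ heights, give the term $KM^{-1/2}PT$: with $|t_p-t_{p'}|\le3T$, one gets $L|\Phi''|^{1/2}\ll L^{1/4}T^{1/4}y^{1/4}T^{1/4}\ll K\,M^{-1/2}T$ after substituting.

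\emph{Different fractions.} For $j'\ne j$ the values $u_jv_j$ are distinct by (S2), so $|\sqrt{t_py_j}-\sqrt{t_{p'}y_{j'}}|$ is large, except for at most one $j'$ per height $p'$. This is the step where the heights interfere: $t_py_j=t_{p'}y_{j'}$ may nearly hold. I would use that $ty_j$ depends on $j$ through $t/(u_jq_j)$ up to fixed factors, so for fixed $p'$ the near-coincidence singles out a unique $j'$. For non-exceptional $j'$, $|\Phi''|\gg\sqrt{T y}\,L^{-3/2}\cdot(\text{relative gap})$, and the second-derivative test summed over $\#\Jc\ll\Kc^2T/M+1$ fractions and $P$ heights again lands in $KM^{-1/2}PT\cdot T^\delta$.

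\emph{Exceptional fraction.} For the exceptional $j'$, Lemma~\ref{lem:sep} (the separation of $b_j/q_j-b_{j'}/q_{j'}$ from the integers, beating $y_j-y_{j'}$ because of the short interval in (S2)) keeps $\|\Phi'\|\gg1/\Kc^2$ while $F_j-F_{j'}$ is smaller. Kusmin--Landau then gives $\ll\Kc^2$ per height, hence $\ll P\Kc^2\ll KM^{-1/2}PT$. If instead $F_j-F_{j'}$ is not small, $|\Phi''|$ is again large and the previous case applies.

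I expect this case analysis of the different fractions, in particular making the exceptional set uniform across heights and proving the non-exceptional $\Phi''$ lower bound, to be the main obstacle.
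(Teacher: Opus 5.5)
Your architecture is the paper's: Bombieri's form of Hal\'asz's inequality with $\xi=(\lambda_{\bar f^{\chi}}(\ell)\ell^{-1/4})$, the $\kappa_{p,j}$ moved into the test vectors, and then the same four cases. Your diagonal and same-fraction cases are correct, apart from a stray $T^{1/4}$: the term $L|\Phi''|^{1/2}$ is $L^{1/4}(Ty_j)^{1/4}\ll M^{1/2}\le KM^{-1/2}T$. The two cases you flag are not yet proofs, however, and one claim in them is false. Below I use the paper's labelling, with $(p',j')$ fixed and $(p,j)$ summed, and $\Lambda=A_j(t_p)-A_{j'}(t_{p'})=(u_jv_j-c)/dt_p$ with $c=u_{j'}v_{j'}t_p/t_{p'}$.

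\emph{Exceptional fraction.} Lemma~\ref{lem:sep} does not give $\|\Phi'\|\gg\Kc^{-2}$. It gives only $\|z_{jj'}\|\gg\Kc^{-2}\min(1,M^2T^{-2})$, and the weak bound is attained. If $q_j=q_{j'}$ and $u_j\equiv u_{j'}\pmod{v_j}$, the rational parts cancel and $\|z_{jj'}\|=|y_j-y_{j'}|$, which is $\asymp\Kc^{-2}M^2T^{-2}$ when $|u_j-u_{j'}|=v_j$. Nothing in (S0)--(S2) prevents such a pair from being the exceptional one when $M$ is somewhat smaller than $T$. Kusmin--Landau therefore gives $\Kc^2T^{2+\delta/4}M^{-2}$ per height, not $\Kc^2$. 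This is still admissible, but only via $K\le M^{1/2}T^{-1/3}$ and $M\ge T^{2/3}$.

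Because $\|z_{jj'}\|$ can be this small, ``$F_j-F_{j'}$ is smaller'' has to be made quantitative. The paper splits according to whether $|\Lambda|\le c\Kc M^{1/2}L^{1/2}T^{-2-\delta/4}$. Below this threshold it must also control the correction $y_j^2-y_{j'}^2$ in $F_j^2-F_{j'}^2$. For that (S2) is too weak, and exceptionality itself is used: $|u_jv_j-u_{j'}v_{j'}|\ll\Kc^2|t_p-t_{p'}|/M+1$.

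Above the threshold you are not in ``the previous case'', because $|\Lambda|$ can be far below $1/dT$. Then $|\Phi''|$ is only $\gg\Kc^{-2}M^2L^{-1}T^{-2-\delta/4}$, so the $|\Phi''|^{-1/2}$ term of the second-derivative test becomes $\Kc M^{-1}L^{1/2}T^{1+\delta/8}$ and must be checked afresh. One must also verify that the correction in the bracket of $\Phi''$ is still dominated by $|\Lambda|$. The threshold is chosen precisely so that both halves close.

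\emph{Non-exceptional fractions.} Here the distinctness of the integers $u_jv_j$ is needed for the summation over $j$, not only to isolate the exceptional index. Suppose you use only the uniform bound $|\Lambda|\gg1/dT$ for each of the $\#\Jc\ll\Kc^2T/M+1$ fractions. Then the $|\Phi''|^{-1/2}$ terms sum to about $\Kc^{7/2}M^{-7/4}L^{3/4}T^{3/2}\ll K^2M^{-1}T^{3/2}$ per height. This exceeds the target $KM^{-1/2}T$ by up to $(T/Q)^{1/2}$, which can be as large as $T^{1/6}$.

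The fix is to order the $j$ by $|u_jv_j-c|$. The $m$-th has $|\Lambda|\gg m/dT$, so $\sum_j|\Lambda|^{-1/2}\ll T^{1/2}(\#\Jc)^{1/2}$. Together with $\sum_j|\Lambda|^{1/2}\ll\#\Jc\,\Kc M^{-1/2}$, this gives $KM^{-1/2}T$ per height.

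Finally, your first-derivative estimates in cases 3 and 4 need $\Phi'$ to be monotone on at most two intervals. This holds because the bracket in $\Phi''$ in Lemma~\ref{lem:deriv} is linear in $\ell$.
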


This is \cite[(4.4.44)--(4.4.46)]{Jutila} at arbitrary level. Large-value estimates over pairs formed by a height and a character or modulus appear also in Meurman's work on the twelfth moment of Dirichlet $L$-functions \cite{Meurman84}, and in the spectral approach of Jutila and Motohashi \cite{JutilaMotohashi}. The proof occupies the rest of the subsection.  First the two derivatives.

\begin{lemma}\label{lem:deriv}
Let $i,j\in\Jc$, $t_p,t_{p'}\in[T/2,3T]$, and put
\[
  \Phi(\ell)=g^\pm_{ir}(\ell;t_p)-g^\pm_{jr}(\ell;t_{p'}),\qquad A_j(t)=u_jq_j/t=u_jv_j/dt,
\]
and $F_i=F_i(\ell;t_p)$, $F_j=F_j(\ell;t_{p'})$ as in \eqref{eq:dl}.  Then
\begin{align}
  \Phi'(\ell)&=z_{ij}\mp\bigl(F_i-F_j\bigr),\qquad z_{ij}=\frac{b_i}{q_i}-\frac{b_j}{q_j}-(y_i-y_j),\label{eq:Phi1}\\
  F_i-F_j&=\frac{d\,t_pt_{p'}}{2\pi r\ell\,u_iq_iu_jq_j\,(F_i+F_j)}\Bigl[A_j(t_{p'})-A_i(t_p)+\frac{\pi d\ell}{2rt_pt_{p'}}\Bigl(\frac{u_jq_j}{u_iq_i}-\frac{u_iq_i}{u_jq_j}\Bigr)\Bigr],\label{eq:Phi1b}\\
  \Phi''(\ell)&=\pm\frac{r\,(t_pt_{p'})^2y_i^2y_j^2}{\pi^2d\,\ell^3\,F_iF_j\,(t_py_iF_j+t_{p'}y_jF_i)}\Bigl[A_j(t_{p'})-A_i(t_p)+\frac{\pi d\ell}{2r}\bigl(t_{p'}^{-2}-t_p^{-2}\bigr)\Bigr].\label{eq:Phi2}
\end{align}
For $L\le\ell\le2L$ with $L\ll Q$, the positive prefactors in \eqref{eq:Phi1b} and \eqref{eq:Phi2} are $\asymp_N\Kc^{-3}M^{3/2}L^{-1/2}$ and $\asymp_N\Kc^{-3}M^{3/2}L^{-3/2}$ respectively, and each is a smooth function of $\ell$ whose logarithmic derivative is $O(1/\ell)$.
\end{lemma}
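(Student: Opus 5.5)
The plan is a direct computation from \eqref{eq:dl}. Since $\Phi(\ell)=g^\pm_{ir}(\ell;t_p)-g^\pm_{jr}(\ell;t_{p'})$ and the same sign $\pm$ and the same $r,d$ are used for both terms by (S0), differentiating term by term with \eqref{eq:dl} gives
\[
\Phi'(\ell)=\frac{b_i}{q_i}-y_i\mp F_i-\Bigl(\frac{b_j}{q_j}-y_j\mp F_j\Bigr),
\]
which is \eqref{eq:Phi1}. For \eqref{eq:Phi1b} I will write $F_i-F_j=(F_i^2-F_j^2)/(F_i+F_j)$ and compute
\[
F_i^2-F_j^2=y_i^2-y_j^2+\frac{t_py_i-t_{p'}y_j}{\pi\ell},
\]
with $y_i=d/(2ru_iq_i)$. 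Putting everything over the common denominator $(2r)^2u_iq_iu_jq_j$, the term $(t_py_i-t_{p'}y_j)/\pi\ell$ becomes
\[
\frac{d}{2\pi r\ell\,u_iq_iu_jq_j}\bigl(t_pu_jq_j-t_{p'}u_iq_i\bigr)=\frac{d\,t_pt_{p'}}{2\pi r\ell\,u_iq_iu_jq_j}\bigl(A_j(t_{p'})-A_i(t_p)\bigr).
\]
Similarly,
\[
y_i^2-y_j^2=\frac{d^2}{4r^2}\Bigl(\frac1{(u_iq_i)^2}-\frac1{(u_jq_j)^2}\Bigr)=\frac{d^2}{4r^2u_iq_iu_jq_j}\Bigl(\frac{u_jq_j}{u_iq_i}-\frac{u_iq_i}{u_jq_j}\Bigr).
\]
Factoring out the same prefactor then gives exactly the bracket in \eqref{eq:Phi1b}. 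This part is pure algebra.

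For $\Phi''$, I differentiate $F_j$ in $\ell$:
\[
\partial_\ell F_j=-\frac{ty_j}{2\pi\ell^2F_j}.
\]
Hence
\[
\Phi''(\ell)=\pm\frac1{2\pi\ell^2}\Bigl(\frac{t_py_i}{F_i}-\frac{t_{p'}y_j}{F_j}\Bigr)=\pm\frac{t_py_iF_j-t_{p'}y_jF_i}{2\pi\ell^2F_iF_j},
\]
with the sign bookkeeping checked against the $\mp$ in \eqref{eq:Phi1}. Multiplying numerator and denominator by the conjugate $t_py_iF_j+t_{p'}y_jF_i$ reduces the numerator to
\[
t_p^2y_i^2F_j^2-t_{p'}^2y_j^2F_i^2=t_p^2t_{p'}^2y_i^2y_j^2\Bigl(\frac{1}{t_{p'}^2}\cdot\frac{t_{p'}^2y_j^2}{t_{p'}^2y_j^2}\cdots\Bigr),
\]
which is best expanded directly as
\[
t_p^2y_i^2y_j^2-t_{p'}^2y_j^2y_i^2+\frac{t_pt_{p'}y_iy_j}{\pi\ell}\bigl(t_py_i-t_{p'}y_j\bigr).
\]
Pulling out $(t_pt_{p'})^2y_i^2y_j^2$, the first difference becomes $t_{p'}^{-2}-t_p^{-2}$. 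The second, via $1/y=2ru q/d$, becomes $\frac{2r}{\pi d\ell}(A_j(t_{p'})-A_i(t_p))$. Rescaling by $\pi d\ell/2r$ gives the bracket of \eqref{eq:Phi2}, and the remaining constants assemble into the displayed prefactor $r(t_pt_{p'})^2y_i^2y_j^2/(\pi^2d\ell^3F_iF_j(\cdots))$. The main risk here is only bookkeeping of constants and signs, which I will check by setting $i=j$, $t_p=t_{p'}$ (both brackets must vanish).

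For the sizes, I use the facts after \eqref{eq:dt}: $u_jv_j\asymp\Kc^2T/M$, $y_j\asymp M/(\Kc^2T)$, $t\asymp T$, and by \eqref{eq:theta} $F\asymp\sqrt{ty/\ell}\asymp M^{1/2}\Kc^{-1}L^{-1/2}$. For \eqref{eq:Phi1b} this gives the prefactor
\[
\frac{T^2}{L\,(\Kc^2T/M)^2\,M^{1/2}\Kc^{-1}L^{-1/2}}=\Kc^{-3}M^{3/2}L^{-1/2}.
\]
For \eqref{eq:Phi2} it gives
\[
\frac{T^4\,(M/\Kc^2T)^4}{L^3\,(M/\Kc^2L)\,T\,(M/\Kc^2T)\,M^{1/2}\Kc^{-1}L^{-1/2}}=\Kc^{-3}M^{3/2}L^{-3/2},
\]
with constants depending only on $N$ through $d$ and $r$. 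Smoothness with logarithmic derivative $O(1/\ell)$ follows because each factor is a power of $\ell$ times $F_i$, $F_j$ or $t_py_iF_j+t_{p'}y_jF_i$. These are positive combinations of functions with $\partial_\ell\log F=-\tfrac{ty}{2\pi\ell^2F^2}=O(1/\ell)$, the last by $F^2\ge ty/\pi\ell$.
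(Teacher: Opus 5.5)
Your proposal is correct and follows the paper's proof essentially step for step. You obtain \eqref{eq:Phi1} directly from \eqref{eq:dl}, \eqref{eq:Phi1b} via $F_i-F_j=(F_i^2-F_j^2)/(F_i+F_j)$, and \eqref{eq:Phi2} via $\partial_\ell F=-ty/2\pi\ell^2F$ followed by multiplying through by $t_py_iF_j+t_{p'}y_jF_i$; the sizes come from $u_jq_j\asymp\Kc^2T/dM$, $y_j\asymp M/\Kc^2T$ and \eqref{eq:theta}, all exactly as in the paper. In a write-up, delete the abandoned intermediate expression for $t_p^2y_i^2F_j^2-t_{p'}^2y_j^2F_i^2$, which is incomplete as written. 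Your bound $|\partial_\ell\log F|\le1/2\ell$, obtained from $F^2\ge ty/\pi\ell$, is a clean justification of the final logarithmic-derivative claim.
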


\begin{proof}
\eqref{eq:Phi1} is \eqref{eq:dl}.  For \eqref{eq:Phi1b}, $F_i-F_j=(F_i^2-F_j^2)/(F_i+F_j)$ and
\begin{align*}
  F_i^2-F_j^2&=(y_i^2-y_j^2)+\frac1{\pi\ell}\bigl(t_py_i-t_{p'}y_j\bigr),\\
  t_py_i-t_{p'}y_j&=\frac d{2r}\Bigl(\frac{t_p}{u_iq_i}-\frac{t_{p'}}{u_jq_j}\Bigr)=\frac{d\,t_pt_{p'}}{2r\,u_iq_iu_jq_j}\bigl(A_j(t_{p'})-A_i(t_p)\bigr),
\end{align*}
while
\[
  y_i^2-y_j^2=\frac{d^2}{4r^2}\cdot\frac{(u_jq_j)^2-(u_iq_i)^2}{(u_iq_iu_jq_j)^2},
\]
which is the same factor $\frac{d\,t_pt_{p'}}{2\pi r\ell\,u_iq_iu_jq_j}$ times $\frac{\pi d\ell}{2rt_pt_{p'}}\bigl(\frac{u_jq_j}{u_iq_i}-\frac{u_iq_i}{u_jq_j}\bigr)$.

For \eqref{eq:Phi2}, $\partial_\ell F=-ty/2\pi\ell^2F$, so
\[
  \Phi''=\pm(t_py_iF_j-t_{p'}y_jF_i)/2\pi\ell^2F_iF_j,
\]
and
\begin{align*}
  (t_py_iF_j)^2-(t_{p'}y_jF_i)^2&=y_i^2y_j^2(t_p^2-t_{p'}^2)+\frac{t_pt_{p'}y_iy_j}{\pi\ell}\bigl(t_py_i-t_{p'}y_j\bigr)\\
  &=\frac{2r(t_pt_{p'})^2y_i^2y_j^2}{\pi d\ell}\Bigl[A_j(t_{p'})-A_i(t_p)+\frac{\pi d\ell}{2r}\bigl(t_{p'}^{-2}-t_p^{-2}\bigr)\Bigr],
\end{align*}
using $d/2ru_iq_iu_jq_j=2ry_iy_j/d$.  Dividing by $t_py_iF_j+t_{p'}y_jF_i$ gives \eqref{eq:Phi2}.

For the sizes, $u_jq_j\asymp\Kc^2T/dM$, $y_j\asymp d^2M/r\Kc^2T$ and, by \eqref{eq:theta}, $F_j\asymp\sqrt{Ty_j/L}$ for both heights.  Substituting gives the two orders of magnitude, and each factor is a power of $\ell$, of $F_i$, of $F_j$ or of $t_py_iF_j+t_{p'}y_jF_i$, all of which have logarithmic derivative $O(1/\ell)$.
\end{proof}

These are Jutila's (4.4.47) and (4.4.48), with $u_jq_j=u_jv_j/d$ in place of $hk$ and $b_j/q_j$ in place of $\bar h/k$.  His correction terms are printed with the factor $1/2\pi x$ where \eqref{eq:Phi1b} and \eqref{eq:Phi2} have $\pi\ell/2$, and nothing depends on this.  The level enters only through $d$ and $r$.  The bracket in \eqref{eq:Phi2} is linear in $\ell$, so $\Phi''$ changes sign at most once on any interval, and $\Phi'$ is monotone on each of at most two subintervals.

\begin{lemma}\label{lem:sep}
For $i\ne j$ in $\Jc$,
\[
  \|z_{ij}\|\gg_N\Kc^{-2}\min(1,M^2T^{-2})\ge\Kc^{-2}M^2T^{-2-\delta/4}.
\]
\end{lemma}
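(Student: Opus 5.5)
The plan is to split $z_{ij}$ into the rational part $b_i/q_i-b_j/q_j$ and the correction $y_i-y_j$, show the first is far from the integers, and show the second is small compared with that distance.

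\emph{Step 1: the rational part.} By (S0), $d$ and $r$ are common to all $j\in\Jc$. The fractions $b_i/q_i$ and $b_j/q_j$ are rationals with denominators $q_i,q_j\le v_j<2\Kc$. Hence $b_i/q_i-b_j/q_j$ is either an integer or at distance $\ge 1/(q_iq_j)\gg\Kc^{-2}$ from the integers.

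To exclude the integer case, note that it would force $q_i=q_j$ and $b_i=b_j$, because both numerators lie in $[0,q)$. By the remark after \eqref{eq:transform}, with $c,d$ fixed, $b_i=b_j$ at a common denominator is equivalent to $u_i\equiv u_j\pmod{v_i}$. Since $v_i=dq_i=v_j$, the fractions $-u_i/v_i$ and $-u_j/v_j$ would then differ by an integer. But $|\alpha_i-\alpha_j|\ll T/M$. If $M\ge T$ this is $<1$, so $\alpha_i=\alpha_j$, a contradiction. If $M<T$, distinct fractions differing by a nonzero integer give a difference $\ge1$. This case is handled by noting that $u_iv_i$ and $u_jv_j$ lie in an interval of length $c_1\Kc^2T^2/M^2$, while $|u_i-u_j|v\ge v\ge\Kc$. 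That only bounds the count, so instead I use $\|\cdot\|$ directly, as follows.

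\emph{Step 2: the correction term.} From $y_j=d/(2ru_jq_j)=d^2/(2ru_jv_j)$,
\[
|y_i-y_j|=\frac{d^2|u_jv_j-u_iv_i|}{2r\,u_iv_iu_jv_j}\ll_N\frac{c_1\Kc^2T^2/M^2}{(\Kc^2T/M)^2}=c_1\Kc^{-2}.
\]
So by (S2) the correction is at most $c_1C_N\Kc^{-2}$, with $c_1$ as small as we please. When $b_i/q_i-b_j/q_j\notin\Z$, this gives $\|z_{ij}\|\ge(q_iq_j)^{-1}-|y_i-y_j|\gg\Kc^{-2}$.

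\emph{Step 3: the degenerate case.} Here $b_i/q_i\equiv b_j/q_j\pmod 1$, so $q_i=q_j$ and $u_i\equiv u_j\pmod{v}$ with $u_i\ne u_j$. Then $z_{ij}\equiv -(y_i-y_j)$, and
\[
|y_i-y_j|\asymp\frac{|u_j-u_i|v}{(u v)^2}\ge\frac{v^2}{(\Kc^2T/M)^2}\asymp\Kc^{-2}M^2T^{-2}.
\]
The last inequality uses $|u_i-u_j|\ge v$. Since the same quantity is also $\ll c_1\Kc^{-2}<1/2$, its distance to the integers is itself. This produces the $\min(1,M^2T^{-2})$.

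Finally, $M\ge T^\delta Q/2$ together with $M\le T^{1+\delta/16}$ gives $\min(1,M^2/T^2)\ge M^2T^{-2-\delta/4}$. The main obstacle is bookkeeping in Step 1: confirming through the definitions of $a_j$ and $b_j$ that coincidence of the twists forces equal denominators and congruent $u$'s, with constants depending only on $N$.
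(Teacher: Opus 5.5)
Your proof is correct and follows the paper's argument. You use (S2) to make $|y_i-y_j|\le c\,\Kc^{-2}$ with $c$ small, and then split according to the rational part $b_i/q_i-b_j/q_j$. Either it is at distance $\ge1/q_iq_j$ from $\Z$, which merges the paper's cases $q_i\ne q_j$ and $q_i=q_j$, $a_i\not\equiv a_j$; or it is an integer, in which case $u_i\equiv u_j\pmod v$ forces $|u_i-u_j|\ge v$, and the correction itself gives $\|z_{ij}\|\gg\Kc^{-2}M^2T^{-2}$. The abandoned attempt in Step 1 to exclude the integer case is unnecessary and can be deleted, and the final inequality needs only $M\le T^{1+\delta/8}$, not the lower bound on $M$.
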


\begin{proof}
By (S2),
\[
  |y_i-y_j|=\frac{d^2|u_iv_i-u_jv_j|}{2r\,u_iv_iu_jv_j}\le\frac{c_1d^2}{2r}\cdot\frac{\Kc^2T^2/M^2}{(\Kc^2T/4\pi M)^2}=\frac{8\pi^2c_1d^2}{r\Kc^2},
\]
using $u_jv_j=v_j^2(u_j/v_j)>\Kc^2T/4\pi M$ for $j\in\Bc(M)$.  With $c_1$ small this is at most $d^2/8\Kc^2$, and $d\le v_j<2\Kc$.

If $q_i\ne q_j$ then $b_i/q_i-b_j/q_j$ is a non-zero rational with denominator dividing $q_iq_j$, since $(b_i,q_i)=(b_j,q_j)=1$, so $\|b_i/q_i-b_j/q_j\|\ge1/q_iq_j\ge d^2/4\Kc^2$ and $\|z_{ij}\|\ge d^2/8\Kc^2$.  If $q_i=q_j=q$ but $a_i\not\equiv a_j\pmod q$ then $\|(b_i-b_j)/q\|\ge1/q=d/v_i\ge d/2\Kc$ and $\|z_{ij}\|\ge d/4\Kc$.

If $q_i=q_j$ and $a_i\equiv a_j\pmod q$, then $u_i\equiv u_j\pmod{v_i}$, so $|u_i-u_j|\ge v_i\ge\Kc$ and, as the rational part is an integer,
\[
  \|z_{ij}\|=|y_i-y_j|=\frac{d^2|u_i-u_j|}{2r\,u_iu_jv_i}\ge\frac{d^2}{2ru_iu_j}\gg\frac{d^2M^2}{r\Kc^2T^2},
\]
using $u_j\asymp\Kc T/M$ and $|y_i-y_j|\le d^2/8\Kc^2<\frac12$.  In every case $\|z_{ij}\|\gg\Kc^{-2}\min(1,M^2/T^2)$, and $M\le T^{1+\delta/8}$ gives the last inequality.
\end{proof}

\begin{proof}[Proof of Proposition \ref{prop:sieve}]
Let
\[
  \xi=(\lambda_{\bar f^{\chi}}(\ell)\ell^{-1/4})_{L\le\ell\le2L}
\]
and
\[
  \varphi_{p,j}=(\kappa_{p,j}(\ell)\e(g^\pm_{jr}(\ell;t_p)))_{L\le\ell\le2L},
\]
so that the inner sum in the proposition is $(\xi,\varphi_{p,j})$.  Bombieri's form of Hal\'asz's inequality \cite[Lemma 4.5]{Jutila} gives
\[
  \sum_{p,j}|(\xi,\varphi_{p,j})|^2\ \le\ \|\xi\|^2\max_{(p',j')}\sum_{(p,j)}|(\varphi_{p,j},\varphi_{p',j'})|,\qquad \|\xi\|^2=\sum_{L\le\ell\le2L}|\lambda_{\bar f^{\chi}}(\ell)|^2\ell^{-1/2}\ll L^{1/2}T^{\delta/2},
\]
by Deligne's bound.

Fix $(p',j')$ and put $\Phi=\Phi_{p,j}=g^\pm_{jr}(\cdot\,;t_p)-g^\pm_{j'r}(\cdot\,;t_{p'})$.  The coefficient $\kappa_{p,j}(\ell)\overline{\kappa_{p',j'}(\ell)}$ is bounded by $A^2$ with total variation at most $2A^2$, so by partial summation
\[
  |(\varphi_{p,j},\varphi_{p',j'})|\ \le\ 3A^2\Delta(p,j),\qquad \Delta(p,j)=\max_{L\le L_1\le L_2\le2L}\Bigl|\sum_{L_1\le\ell\le L_2}\e(\Phi_{p,j}(\ell))\Bigr|,
\]
and it suffices to prove
\begin{equation}\label{eq:4446}
  \sum_{(p,j)}\Delta(p,j)\ \ll\ T^{\delta/2}\bigl(\Kc^{-1}K^{-1}M+KM^{-1/2}PT\bigr).
\end{equation}
We use two estimates for exponential sums, namely the second derivative estimate \cite[Lemma 4.1]{Jutila}, that if $\lambda_2\le|\Phi''|\le h\lambda_2$ on $[L_1,L_2]$ then
\[
  \sum\e(\Phi(\ell))\ll h(L_2-L_1)\lambda_2^{1/2}+\lambda_2^{-1/2},
\]
and the first derivative estimate \cite[Lemma 4.6]{Jutila}, that if $\Phi'$ is monotone and $\|\Phi'\|\ge m>0$ then
\[
  \sum\e(\Phi(\ell))\ll m^{-1}.
\]
The latter is applied on each of the at most two intervals of monotonicity.

Lemma \ref{lem:deriv} is used with $(i,j)\to(j,j')$ and the heights $(t_p,t_{p'})$, and we write $B_1(\ell)$, $B_2(\ell)$ for the brackets in \eqref{eq:Phi1b}, \eqref{eq:Phi2} and
\[
  \Lambda:=A_j(t_p)-A_{j'}(t_{p'})
\]
for their common leading term, so that $|B_1|,|B_2|$ are $|\Lambda|$ up to the corrections.  Throughout, $L\ll Q=M/K^2$, $\Kc\ll K$, $K\gg T^{\delta/2}$, $K\le M^{1/2}T^{-1/3}$, $T^{2/3}\le M\le T^{1+\delta/8}$ and $|t_p-t_{p'}|\le3T$.

\smallskip\noindent\emph{1. The diagonal pair.} We first treat $(p',j')$ itself.  Trivially $\Delta(p',j')\le L\ll M/K^2\le\Kc^{-1}K^{-1}M$.

\smallskip\noindent\emph{2. The same fraction at different heights.} Let $j=j'$ and $p\ne p'$, so that the fraction is the same and the heights differ.  Here $z_{jj}=0$, and in \eqref{eq:Phi1b} and \eqref{eq:Phi2} the corrections vanish or have the sign of the leading term $-\Lambda$, where $\Lambda=u_jq_j(t_p^{-1}-t_{p'}^{-1})$, so $\Phi''$ has constant sign, $\Phi'$ is monotone, and by Lemma \ref{lem:deriv}
\begin{align*}
  |\Phi'(\ell)|&\asymp\Kc^{-3}M^{3/2}L^{-1/2}\cdot\frac{\Kc^2T}{M}\cdot\frac{|t_p-t_{p'}|}{T^2}=\Kc^{-1}M^{1/2}L^{-1/2}T^{-1}|t_p-t_{p'}|,\\
  |\Phi''(\ell)|&\asymp\Kc^{-1}M^{1/2}L^{-3/2}T^{-1}|t_p-t_{p'}|.
\end{align*}
If $|t_p-t_{p'}|\le c\Kc M^{-1/2}L^{1/2}T$ with $c$ small then $|\Phi'|\le\frac12$, so $\|\Phi'\|=|\Phi'|$ and the first derivative estimate gives
\[
  \Delta(p,j')\ll\Kc M^{-1/2}L^{1/2}T|t_p-t_{p'}|^{-1}.
\]
Since the $t_p$ are $Z$-spaced,
\[
  \sum_{p\ne p'}|t_p-t_{p'}|^{-1}\ll Z^{-1}\Lc\le\Kc^{-2}MT^{-1}\Lc,
\]
and these $p$ contribute $\ll\Kc^{-1}M^{1/2}L^{1/2}\Lc\ll\Kc^{-1}K^{-1}M\Lc$.

Otherwise $|\Phi''|\gg L^{-1}$ and $|\Phi''|\ll\Kc^{-1}M^{1/2}L^{-3/2}$, with $|\Phi''|$ varying by a bounded factor on $[L,2L]$, and the second derivative estimate gives
\[
  \Delta(p,j')\ll L(\Kc^{-1}M^{1/2}L^{-3/2})^{1/2}+L^{1/2}\ll M^{1/2}.
\]
These $p$ contribute $\ll PM^{1/2}\le PKM^{-1/2}T$, since $M\le KT$.

\smallskip\noindent\emph{3. Different fractions: the non-exceptional terms.} Let $j\ne j'$ and fix $p$.  Then
\[
  \Lambda=\frac1{dt_p}\bigl(u_jv_j-u_{j'}v_{j'}t_p/t_{p'}\bigr),
\]
and as $j$ varies in $\Jc$ the integers $u_jv_j$ are distinct by (S2), so at most one $j$, the \emph{exceptional} one, has $|u_jv_j-u_{j'}v_{j'}t_p/t_{p'}|<\frac12$.  For the others $|\Lambda|\ge1/2dt_p\ge1/6dT$, and always $|\Lambda|\ll\Kc^2/M$.

The corrections are small. In $B_2$ the correction is $\ll dL/rT^2$, which is $\ll T^{-\delta/4}/dT$ since $L\ll Q\le T^{1-\delta/4}$.  In $B_1$ it is
\[
  \frac{\pi d\ell}{2rt_pt_{p'}}\cdot\frac{|(u_jv_j)^2-(u_{j'}v_{j'})^2|}{u_jv_ju_{j'}v_{j'}},
\]
and by (S2) $|u_jv_j-u_{j'}v_{j'}|\le c_1\Kc^2T^2/M^2$ while $u_jv_j\asymp\Kc^2T/M$, so it is
\[
  \ll dL/rTM\ll d^2L/rM\cdot(1/dT)\ll(d^2/rK^2)(1/dT).
\]
Hence for non-exceptional $j$, $|B_1|\asymp|B_2|\asymp|\Lambda|$.

Consider first the non-exceptional $j$.  By Lemma \ref{lem:deriv}, $|\Phi''|\asymp\Kc^{-3}M^{3/2}L^{-3/2}|\Lambda|$, varying by a bounded factor on $[L,2L]$, and the second derivative estimate gives
\[
  \Delta(p,j)\ll L\Kc^{-3/2}M^{3/4}L^{-3/4}|\Lambda|^{1/2}+\Kc^{3/2}M^{-3/4}L^{3/4}|\Lambda|^{-1/2}.
\]

Since $|\Lambda|=|u_jv_j-c|/dt_p$ with the $u_jv_j$ distinct integers and $c$ fixed,
\[
  \sum_j|\Lambda|^{-1/2}\ll(dT)^{1/2}\sum_{m\le\#\Jc}m^{-1/2}\ll T^{1/2}(\#\Jc)^{1/2}\ll\Kc TM^{-1/2}+T^{1/2},
\]
and
\[
  \sum_j|\Lambda|^{1/2}\ll\#\Jc\cdot\Kc M^{-1/2}\ll\Kc^3TM^{-3/2}+\Kc M^{-1/2}.
\]
Therefore, using $L\ll M/K^2$ and $\Kc\ll K$,
\begin{align*}
  \sum_{j\ \mathrm{non\text{-}exc}}\Delta(p,j)\ &\ll\ \Kc^{-3/2}M^{3/4}L^{1/4}\bigl(\Kc^3TM^{-3/2}+\Kc M^{-1/2}\bigr)\\
  &\qquad+\Kc^{3/2}M^{-3/4}L^{3/4}\bigl(\Kc TM^{-1/2}+T^{1/2}\bigr)\\
  &\ll\ \Kc^{3/2}K^{-1/2}M^{-1/2}T+\Kc^{5/2}K^{-3/2}M^{-1/2}T+\Kc^{-1/2}K^{-1/2}M^{1/2}+T^{1/2}\\
  &\ll\ KM^{-1/2}T,
\end{align*}
the last step because $M\ll KT$, which follows from $MQ\ll T^2$ and $Q=M/K^2$. In particular $\Kc^{-1/2}K^{-1/2}M^{1/2}\ll KM^{-1/2}T$ since $\Kc,K\ge1$, so this term is absorbed for each fixed $p$, before the sum over heights.  This is Jutila's (4.4.50).

\smallskip\noindent\emph{4. The exceptional fraction.} There is at most one such $j$ for each height $t_p$.  Suppose first that
\begin{equation}\label{eq:4452}
  |\Lambda|\le c\,\Kc M^{1/2}L^{1/2}T^{-2-\delta/4}
\end{equation}
with $c=c(N)$ small.

The correction in $B_1$ is now bounded differently, since exceptionality gives
\[
  |u_jv_j-u_{j'}v_{j'}|\le u_{j'}v_{j'}|t_p/t_{p'}-1|+\frac12\ll\Kc^2|t_p-t_{p'}|/M+1,
\]
so the correction is
\[
  \ll\frac{dL}{rT^2}\bigl(\frac{|t_p-t_{p'}|}T+\frac M{\Kc^2T}\bigr)\ll\frac{dL}{rT^2}(1+T^{\delta/8}\Kc^{-2}).
\]
By Lemma \ref{lem:deriv} and \eqref{eq:4452},
\begin{align*}
  |F_j-F_{j'}|\ &\ll\ \Kc^{-3}M^{3/2}L^{-1/2}\Bigl(c\Kc M^{1/2}L^{1/2}T^{-2-\delta/4}+\frac{dL}{rT^2}(1+T^{\delta/8}\Kc^{-2})\Bigr)\\
  &\ll\ \Kc^{-2}M^2T^{-2-\delta/4}\Bigl(c+\frac{d}{r}\cdot\frac{L^{1/2}T^{\delta/4}}{\Kc M^{1/2}}(1+T^{\delta/8}\Kc^{-2})\Bigr),
\end{align*}
and $L^{1/2}/\Kc M^{1/2}\le1/\Kc K$, so with $c$ small and $T$ large this is at most half the lower bound of Lemma \ref{lem:sep}.  Hence
\[
  \|\Phi'\|\ge\frac12\|z_{jj'}\|\gg\Kc^{-2}M^2T^{-2-\delta/4}
\]
on $[L,2L]$, and the first derivative estimate on the two intervals of monotonicity gives
\[
  \Delta(p,j)\ \ll\ \Kc^2M^{-2}T^{2+\delta/4}\ \le\ K^2M^{-2}T^{2+\delta/4}\ \le\ KM^{-3/2}T^{5/3+\delta/4}\ \le\ KM^{-1/2}T^{1+\delta/4},
\]
using $K\le M^{1/2}T^{-1/3}$ and $M\ge T^{2/3}$.

If instead \eqref{eq:4452} fails, then
\[
  c\Kc M^{1/2}L^{1/2}T^{-2-\delta/4}<|\Lambda|<1/dT,
\]
the correction in $B_2$, which is $\ll dL/rT^2$, is smaller than $|\Lambda|$ by the factor
\[
  dL^{1/2}T^{\delta/4}/rc\Kc M^{1/2}\ll T^{\delta/4}/\Kc K\ll T^{-\delta/4},
\]
so $|\Phi''|\asymp\Kc^{-3}M^{3/2}L^{-3/2}|\Lambda|$ lies between $\Kc^{-2}M^2L^{-1}T^{-2-\delta/4}$ and $\Kc^{-3}M^{3/2}L^{-3/2}T^{-1}$, and the second derivative estimate gives
\begin{align*}
  \Delta(p,j)\ &\ll\ L\bigl(\Kc^{-3}M^{3/2}L^{-3/2}T^{-1}\bigr)^{1/2}+\bigl(\Kc^{-2}M^2L^{-1}T^{-2-\delta/4}\bigr)^{-1/2}\\
  &\ll\ M^{3/4}L^{1/4}T^{-1/2}+\Kc M^{-1}L^{1/2}T^{1+\delta/8}\\
  &\ll\ MT^{-1/2}+M^{-1/2}T^{1+\delta/8}\ \ll\ KM^{-1/2}T^{1+\delta/2},
\end{align*}
using $L\le M/K^2$, $\Kc\ll K$, and $M\le T^{1+\delta/8}$ for the first term.  The exceptional $j$ therefore contributes $\ll KM^{-1/2}T^{1+\delta/2}$ for each $p$.

Summing the four contributions over $p\le P$ gives \eqref{eq:4446}, and the proposition.
\end{proof}

\subsection{Completing the large-value bound}
\label{sec:exponent}

\begin{proposition}\label{prop:final}
\eqref{eq:alt3} implies \eqref{eq:target}.
\end{proposition}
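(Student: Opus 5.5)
We insert Proposition~\ref{prop:sieve} into \eqref{eq:4441}. The $\ell$-sum in $\widetilde S_j(t_p)$ is split into $O(\Lc)$ dyadic ranges $L\le\ell\le2L$ with $L\ll Q$. On each range we take
\[
 \kappa_{p,j}(\ell)=w_j(\ell)\kappa_j(\ell;t_p),
\]
with $w_j$ frozen at the left end of the interval containing $t_p$. By \eqref{eq:kappa}, and because $w_j$ is bounded and of bounded variation, these satisfy the hypotheses with $A\ll\Kc^{-1/2}M^{1/4}T^{-1/2}$.

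For $\widetilde S'_j(t_p')$ the amplitude is $\partial_t\kappa_j+2\pi i(\partial_tg^\pm_{jr})\kappa_j$, with the same phase. By Cauchy's estimate and \eqref{eq:dt} it is $O(A\Lc)$. Its variation in $\ell$ is also $O(A\Lc)$, since $\partial_\ell\log x^\pm_j\ll1/\ell$. So the same proposition applies with $A\Lc$ in place of $A$. Cauchy's inequality over the dyadic ranges costs only a further $\Lc$.

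The proposition with $L\ll Q$ then gives both sums in \eqref{eq:4441} as
\[
 \ll\Kc^{-1}M^{1/2}T^{-1}Q^{1/2}T^{2\delta}\bigl(\Kc^{-1}K^{-1}M+KM^{-1/2}RT\bigr),
\]
using $P\le R$. The geometric mean of the two sums is bounded by the same quantity. Substituting into \eqref{eq:4441}, the factor $\Kc M^{-1/2}T^{1/2}$ outside the bracket combines with the square root of $\Kc^{-1}M^{1/2}T^{-1}$, and all powers of $\Kc$ and $M^{1/2}$ cancel. This leaves
\[
 RV\ll T^{O(\delta)}R^{1/2}Q^{1/4}\bigl(\Kc^{-1}K^{-1}M+KM^{-1/2}RT\bigr)^{1/2}.
\]
Since $\Kc\ge1$ and $M/K=M^{1/2}Q^{1/2}$, the first term in the bracket is at most $M^{1/2}Q^{1/2}$. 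The second is $M^{1/2}Q^{-1/2}RT\cdot M^{-1/2}$, that is $RTQ^{-1/2}$, because $K=(M/Q)^{1/2}$. Thus
\[
 RV\ll T^{O(\delta)}R^{1/2}\bigl(M^{1/4}Q^{1/2}+R^{1/2}T^{1/2}\bigr).
\]

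This does not yet match the announced form $R^{1/2}Q^{1/2}+RT^{1/2}Q^{-1/4}$, so the bookkeeping of $M$ must be rechecked. The first term should come out as $R^{1/2}Q^{1/2}$ once the trivial diagonal bound $L\ll Q$ is used in place of $\Kc^{-1}K^{-1}M$. Indeed, step~1 of the proof of Proposition~\ref{prop:sieve} gives $\Delta\le L\le Q$, and the $j'=j$ contributions of step~2 are also $\ll Q\Lc$ when $L\le Q$. The diagonal part of the final bound is then $Q^{1/2}\cdot Q^{1/2}=Q$ inside the square root. The second term should come out as $RT^{1/2}Q^{-1/4}$.

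I expect this exponent bookkeeping to be the main obstacle. The first thing I would try is to carry $L$ explicitly through the bounds rather than replacing it by $Q$ early. I would then verify that the $M$-dependence cancels exactly, as the outline claims, using $Q=M/K^2$ and $Z\asymp\Kc^2T/M$.

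Once the claimed inequality
\[
 RV\ll T^{O(\delta)}\bigl(R^{1/2}Q^{1/2}+RT^{1/2}Q^{-1/4}\bigr)
\]
is in hand, we substitute $Q=(TR)^{2/3}$. Both terms equal $T^{1/3}R^{5/6}$, so $RV\ll T^{1/3+O(\delta)}R^{5/6}$. Raising this to the sixth power gives $R\ll T^{2+O(\delta)}V^{-6}$, which is \eqref{eq:target}.
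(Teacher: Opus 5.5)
Your setup coincides with the paper's: the dyadic split in $\ell$, the choice $\kappa_{p,j}(\ell)=w_j(\ell)\kappa_j(\ell;t_p)$ with $A\asymp\Kc^{-1/2}M^{1/4}T^{-1/2}$, the amplitude $O(A\Lc)$ for $\widetilde S'_j$, and the resulting bound $\Kc^{-1}M^{1/2}T^{-1}Q^{1/2}T^{2\delta}(\Kc^{-1}K^{-1}M+KM^{-1/2}RT)$ for both sums in \eqref{eq:4441}. The gap is in the step you yourself flag. The inequality $RV\ll T^{O(\delta)}(R^{1/2}Q^{1/2}+RT^{1/2}Q^{-1/4})$ is never derived, and the route you sketch towards it does not work. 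The obstruction is an arithmetic slip, because the powers do not cancel:
\[
 \Kc M^{-1/2}T^{1/2}\cdot\bigl(\Kc^{-1}M^{1/2}T^{-1}\bigr)^{1/2}=\Kc^{1/2}M^{-1/4}.
\]
Your displayed bound is still true, since this factor is $\ll K^{1/2}M^{-1/4}=Q^{-1/4}\le1$. But it is useless: its second term says only $V\ll T^{1/2+O(\delta)}$.

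Keep this factor, and do not replace $\Kc^{-1}$ by $1$ in the first term. Using $K^{-1/2}=M^{-1/4}Q^{1/4}$,
\[
 \Kc^{1/2}M^{-1/4}Q^{1/4}\bigl(\Kc^{-1}K^{-1}M\bigr)^{1/2}=M^{1/4}K^{-1/2}Q^{1/4}=Q^{1/2}.
\]
Using $\Kc\ll K$ and $KM^{-1/2}=Q^{-1/2}$,
\[
 \Kc^{1/2}M^{-1/4}Q^{1/4}\bigl(KM^{-1/2}RT\bigr)^{1/2}\ll KM^{-1/2}Q^{1/4}(RT)^{1/2}=Q^{-1/4}(RT)^{1/2}.
\]
Multiplying by $R^{1/2}$ gives exactly the announced inequality. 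This is the paper's computation: it first simplifies the two sums to $T^{2\delta}(\Kc^{-2}K^{-2}M^2T^{-1}+\Kc^{-1}M^{1/2}P)$, and then uses $K^{-1}M^{1/2}=Q^{1/2}$ and $\Kc^{1/2}M^{-1/4}\ll Q^{-1/4}$.

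The repair you propose through the diagonal is neither needed nor valid. Substituting $Q$ for $\Kc^{-1}K^{-1}M$ in your bracket gives a first term $R^{1/2}Q^{3/4}$, which is still a factor $Q^{1/4}$ above the target. Moreover, the step-2 contribution $\Kc^{-1}M^{1/2}L^{1/2}\Lc$ in the proof of Proposition \ref{prop:sieve} is not $\ll Q\Lc$ when $\Kc$ is much smaller than $K$. Once the inequality is derived correctly, your final substitution $Q=(TR)^{2/3}$ completes the proof as in the paper.
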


\begin{proof}
Split each $\widetilde S_j(t_p)$ in \eqref{eq:4441} into $O(\Lc)$ dyadic ranges $L\le\ell\le2L$ with $L\ll Q$, so that
\[
  |\widetilde S_j(t_p)|^2\ll\Lc\sum_L|\widetilde S_j^{(L)}(t_p)|^2.
\]

For each $L$, Proposition \ref{prop:sieve} applies with $\kappa_{p,j}(\ell)=w_j(\ell)\kappa_j(\ell;t_p)$ and $A\asymp\Kc^{-1/2}M^{1/4}T^{-1/2}$, because $w_j(\ell)$ is bounded by $1$ with variation at most $2$, being $\omega^{(q)}_j$ composed with the monotone function $x^\pm_j(\cdot/r;t_0)$, and $\kappa_j(\cdot\,;t_p)$ is bounded by $A$ with variation $O(A)$ on $[L,2L]$ by \eqref{eq:kappa}.  Hence, with $L\ll M/K^2$,
\begin{multline*}
  \sum_{p,j}|\widetilde S_j(t_p)|^2\ \ll\ \Lc^2T^{\delta}\Kc^{-1}M^{1/2}L^{1/2}T^{-1}\bigl(\Kc^{-1}K^{-1}M+KM^{-1/2}PT\bigr)\\
  \ll\ T^{2\delta}\bigl(\Kc^{-2}K^{-2}M^2T^{-1}+\Kc^{-1}M^{1/2}P\bigr),
\end{multline*}
which is Jutila's (4.4.43).  The same holds for $\widetilde S'_j(t'_p)$, whose amplitude $\partial_t\kappa_j+2\pi i(\partial_tg^\pm_{jr})\kappa_j$ is $O(A\Lc)$ with variation $O(A\Lc)$, by Cauchy's estimate, \eqref{eq:dt} and the corresponding $\ell$-derivative bounds in \eqref{eq:kappa}.  Inserting into \eqref{eq:4441} with $P\le R$,
\begin{multline*}
  RV\ \ll\ R^{1/2}\Kc M^{-1/2}T^{1/2+6\delta}\bigl(\Kc^{-1}K^{-1}MT^{-1/2}+\Kc^{-1/2}M^{1/4}R^{1/2}\bigr)\\
  =T^{6\delta}\bigl(R^{1/2}K^{-1}M^{1/2}+R\Kc^{1/2}M^{-1/4}T^{1/2}\bigr).
\end{multline*}
Now $K^{-1}M^{1/2}=Q^{1/2}$ and
$\Kc^{1/2}M^{-1/4}\ll Q^{-1/4}$. Thus
\[
 RV\ll T^{6\delta}
       \bigl(R^{1/2}Q^{1/2}+RT^{1/2}Q^{-1/4}\bigr).
\]
The two terms balance at $Q=(TR)^{2/3}$, the choice already made in
\eqref{eq:M0}. Consequently $RV\ll T^{1/3+6\delta}R^{5/6}$, and
$R\ll T^{2+36\delta}V^{-6}$ follows.
\end{proof}
\subsection{The block form of the theorem}

What Sections~\ref{sec:book} and~\ref{sec:endgame} prove is a statement about a single block, and it is this that the application in Section~\ref{sec:zeros} uses. The parameter $R^*$ below records the count used to build the partition before a subset of the heights is selected; it differs from the surviving count $R$ only by logarithmic factors.

\begin{theorem}\label{thm:block}
Let $\delta>0$ be small, $T$ large, $W\ge T^{1/4+\delta}$, and let $\{t_\nu\}$ be a set of $R\le T^{1/2-2\delta}\Lc^{3}$ points, $1$-spaced in $[T,2T]$.  Let the Farey system, the intervals $I_q$ and the partition $\{\omega^{(q)}_j\}$ of Section~\ref{sec:setup} be constructed using $Q=(TR^*)^{2/3}$, where $R\le R^*\le R\Lc^2$.  Let $T^\delta Q/2\le M\le T^{1+\delta/8}$, and let $\phi_t(z)$ be holomorphic in $t$ and $z$ for
\[
  |\Imag t|\le T^{1/2},\quad T/2\le\Real t\le3T,\quad M/8\le\Real z\le8M,\quad |\Imag z|\le M/8,
\]
and bounded by $1$ there.  If, for every $\nu$ with $t_\nu\in I_q$,
\[
  \Bigl|\sum_{j\in\Bc(M)}\sum_{n\ge1}\frac{\lambda_f(n)}{n^{\frac12+it_\nu}}\phi_{t_\nu}(n)\,\omega^{(q)}_j(n)\Bigr|\ \ge\ W,
\]
then $R\ll T^{2+O(\delta)}W^{-6}$.
\end{theorem}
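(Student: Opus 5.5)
The plan is to rerun Sections~\ref{sec:book} and~\ref{sec:endgame} with the weight $V_t(n/\sqrt C)$ replaced by the general holomorphic weight $\phi_t(n)$, and with $V$ replaced by $W$. Nothing in those sections used properties of $V_t$ beyond three facts. First, it is holomorphic in both variables on the stated domain. Second, it is bounded there. Third, Cauchy's estimate on discs of radius $\asymp M$ gives $\partial_z^i\phi_t(z)\ll_i M^{-i}$ and $\partial_t^i\phi_t\ll_i T^{-i/2}$ on a slightly smaller domain. Since $M/8\le\Real z\le 8M$ covers the support of every $\omega^{(q)}_j$ with $j\in\Bc(M)$, and since $x^\pm_j(\ell/r;t)$ stays within a relative $O(T^{-\delta/4})$ of $h_t(\alpha_j)\in[M,2M]$, these bounds are available wherever the weight is evaluated.

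\textbf{Step 1: transform each piece.} Apply Proposition~\ref{prop:ext} with amplitude $\phi_t$ in place of $V_t(\cdot/\sqrt C)$ to each $P_j$. The partition depends only on $Q=(TR^*)^{2/3}$, and \eqref{eq:37} and \eqref{eq:KH2} continue to hold because $R^*$ and $R$ differ by at most $\Lc^2$; this affects only the $T^{O(\delta)}$ factors. The result is \eqref{eq:transform} with $\tilde h^\pm_j$ built from $\phi_t(x^\pm_j)$.

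\textbf{Step 2: dispose of the error terms.} Proposition~\ref{prop:book} is applied verbatim with $W$ in place of $V$. Either the error terms are $O(WT^{-\delta})$, or already $R\ll T^{2+6\delta}W^{-6}$. Here one has to recheck that the exponent arithmetic survives with $R^*$ in $Q$: the term $T^3Q^{-4}$ becomes $T^{1/3}R^{*-8/3}\le T^{1/3}R^{-8/3}$, so the bound is unaffected.

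\textbf{Step 3: rerun the mean-square argument.} For the main term, proceed exactly through Section~\ref{sec:separate}, the class selection \eqref{eq:thin}, the freezing \eqref{eq:frozen}, Gallagher's inequality \eqref{eq:gallagher}, Lemma~\ref{lem:offdiag} and Proposition~\ref{prop:sieve}, all with $W$ in place of $V$. The amplitude bounds \eqref{eq:kappa} hold for the new $\kappa_j$, since $\partial_\ell\phi_t(x^\pm_j)\ll M^{-1}\partial_\ell x^\pm_j\ll1/\ell$. The holomorphy in $t$ on $|\Imag t|\le Z\le T^{1/3}$ is what makes the integration by parts in Lemma~\ref{lem:offdiag} and the Cauchy bound for $\widetilde S_j'$ go through. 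Proposition~\ref{prop:final} then gives
\[
RW\ll T^{O(\delta)}\bigl(R^{1/2}Q^{1/2}+RT^{1/2}Q^{-1/4}\bigr)
\]
with $Q=(TR^*)^{2/3}$. Since $R\le R^*\le R\Lc^2$, both terms are within $\Lc^{O(1)}$ of their values at $Q=(TR)^{2/3}$, so $RW\ll T^{1/3+O(\delta)}R^{5/6}$, which is the bound claimed.

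\textbf{Expected obstacle.} The main risk is the uniform bound for the $t$-derivative of the amplitude. For $V_t$ this came from the truncated Mellin integral. Here it must come from Cauchy's estimate in $t$ on the strip $|\Imag t|\le T^{1/2}$. That only gives $\partial_t\phi_t\ll T^{-1/2}$, which is harmless because the required bound is $O(A\Lc)$ and the dominant contribution to the $t$-derivative is the phase term $2\pi i(\partial_tg)\kappa_j$.

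A second point to check is that the frozen-weight replacement error, $T^{1/6+2\delta}$, is still $o(WT^{-3\delta})$. This holds because $W\ge T^{1/4+\delta}$, just as it held for $V$.
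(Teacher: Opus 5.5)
Your proposal is correct and is essentially the paper's proof. Like the paper, you rerun Sections~\ref{sec:book} and~\ref{sec:endgame} with $\phi_t(n)$ in place of $V_t(n/\sqrt C)$ and $W$ in place of $V$, using only holomorphy and boundedness of the amplitude (through Proposition~\ref{prop:ext} and \eqref{eq:kappa}), and you absorb the difference between $R^*$ and $R$ in Proposition~\ref{prop:book} and in the final balance exactly as the paper does. The one check you leave implicit is one the paper states: the hypothesis $R\le T^{1/2-2\delta}\Lc^3$, together with $R^*\le R\Lc^2$, is what keeps $T^\delta Q\le T^{1-\delta/4}$, which is the range that Proposition~\ref{prop:ext} and Proposition~\ref{prop:book} require.
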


\begin{proof}
This is the argument of Sections~\ref{sec:book} and~\ref{sec:endgame} with $V_t(n/\sqrt C)$ replaced by $\phi_t(n)$, $V$ by $W$.  No upper bound on $W$ is needed, since the proof of Proposition \ref{prop:book} treats $V\ge T^{1/3+\delta}$ separately, and elsewhere in Sections~\ref{sec:book} and~\ref{sec:endgame} only the lower bound in \eqref{eq:Vrange} is used.  The amplitude is used only through Proposition \ref{prop:ext}, whose proof in Appendix \ref{sec:evidence} asks of it exactly holomorphy and boundedness on the discs used in \cite[Lemma 4.3]{BMN}, and through \eqref{eq:kappa}, where holomorphy in $t$ is used in Lemma \ref{lem:offdiag}.

The hypothesis $R\le T^{1/2-2\delta}\Lc^{3}$ is \eqref{eq:R2}, and with $R^*\le R\Lc^2$ it is what makes $T^\delta Q=T^{2/3+\delta}(R^*)^{2/3}\le T^{1-\delta/4}$, as required in Section~\ref{sec:book}.  The parameter $R^*$ enters only through $K=M^{1/2}T^{-1/3}(R^*)^{-1/3}$, $Q=T^{2/3}(R^*)^{2/3}$; the initial cutoff is $T^\delta Q$.  In Section~\ref{sec:exponent} the two terms $K^{-1}M^{1/2}=T^{1/3}(R^*)^{1/3}$ and $K^{1/2}M^{-1/4}T^{1/2}=T^{1/3}(R^*)^{-1/6}$ then give
\[
  RW\ll T^{1/3+O(\delta)}(R^{1/2}(R^*)^{1/3}+R(R^*)^{-1/6})\ll T^{1/3+O(\delta)}R^{5/6}\Lc,
\]
and $R\ll T^{2+O(\delta)}W^{-6}$ follows as before.
\end{proof}

\subsection{Proof of Theorem \ref{thm:main}}\label{sec:proofmain}

Let $V$ satisfy \eqref{eq:Vrange} and let $\{t_\nu\}$ be $1$-spaced in $[T,2T]$ with $|L(\half+it_\nu,f)|\ge V$. By Theorem \ref{thm:second} we may assume \eqref{eq:R2}. By Section~\ref{sec:setup}, either \eqref{eq:alt1} holds for $\gg R$ points, in which case \eqref{eq:target} was proved there, or \eqref{eq:alt2} holds for some block. In the latter case Proposition \ref{prop:book} either gives \eqref{eq:target} directly or leaves \eqref{eq:alt3}, to which Proposition \ref{prop:final} applies.

To pass to the integral, choose a maximum of $|L(\half+it,f)|$ in
each unit interval meeting $[T,2T]$. The maxima from alternate intervals
form two $1$-spaced sets. Values below $T^{1/4+\delta}$ contribute at most
\[
 T^{1+4\delta}\sum_\nu |L(\half+it_\nu,f)|^2
 \ll_f T^{2+4\delta}\Lc^3
\]
by Theorem \ref{thm:second}. On each dyadic range $[V,2V)$ above that
threshold, \eqref{eq:target} bounds the sixth-power sum by
$T^{2+O(\delta)}$. There are $O(\Lc)$ such ranges by the Weyl bound.
The integral is bounded by the sum of the unit-interval maxima, and so
is $\ll T^{2+O(\delta)}$. Choose $\delta$ sufficiently small in terms of
$\epsilon$ and sum the dyadic height intervals. This proves
Theorem \ref{thm:main}.

The same sampling argument gives
$\int_T^{2T}|L(\half+it,f)|^2\dd t\ll_f T\Lc^3$.
Cauchy's inequality, applied to $|L|$ and $|L|^3$, now gives
Corollary \ref{cor:fourth}.

\section{Moments on lines to the right of the critical line}
\label{sec:lines}

Throughout this section $\half<\sigma\le1$, $T$ is large and $\delta>0$ is small.  Implied constants may depend on $\sigma$, and they are uniform for $\sigma$ in compact subsets of $(\half,1]$.  Let $\{t_\nu\}$ be a set of $R$ points, $1$-spaced in $[T,2T]$, with
\begin{equation}\label{eq:largesigma}
  |L(\sigma+it_\nu,f)|\ge V .
\end{equation}
We compare $L(\sigma+it,f)$ with a Dirichlet polynomial and with the values of $L(s,f)$ on the critical line.

\begin{lemma}\label{lem:dichotomy}
Let $2\le Y\le T^2$ and $V\ge T^\delta$.  For each $\nu$ at least one of the following holds.
\begin{enumerate}
\item[(i)] There is $M\le Y\Lc^2$ with
\[
  \Bigl|\sum_{M<n\le2M}\lambda_f(n)e^{-n/Y}n^{-\sigma-it_\nu}\Bigr|\ge V\Lc^{-2}.
\]
\item[(ii)] There is $t'$ with $|t'-t_\nu|\le\Lc^2$ and
\[
  |L(\half+it',f)|\ge cVY^{\sigma-\frac12}\Lc^{-2},
\]
where $c>0$ depends only on $f$ and $\sigma$.
\end{enumerate}
\end{lemma}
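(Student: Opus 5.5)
We use the standard Mellin representation of a smoothed Dirichlet series with weight $e^{-n/Y}$, and shift the contour to the critical line. For $s=\sigma+it_\nu$,
\[
  \sum_{n\ge1}\lambda_f(n)e^{-n/Y}n^{-s}=\frac1{2\pi i}\int_{(2)}L(s+w,f)\Gamma(w)Y^w\dd w .
\]
The left side is split into the part with $n\le Y\Lc^2$ and a tail. The tail is $O(T^{-10})$ because $e^{-n/Y}\le e^{-\Lc^2}$ there. The part with $n\le Y\Lc^2$ is covered by $O(\Lc)$ dyadic blocks $(M,2M]$. If every such block has modulus $<V\Lc^{-2}$, then the whole smoothed sum is $\ll V\Lc^{-1}$, which is at most $V/4$ for large $T$.

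Next we move the line of integration to $\Real w=\half-\sigma<0$. This crosses the simple pole of $\Gamma(w)$ at $w=0$, whose residue is $L(\sigma+it_\nu,f)$; since $f$ is cuspidal, $L(s,f)$ is entire and there is no other pole. On the new line $L(s+w,f)=L(\half+i(t_\nu+v),f)$ and $|Y^w|=Y^{\frac12-\sigma}$. Hence
\[
  L(\sigma+it_\nu,f)=\sum_{n}\lambda_f(n)e^{-n/Y}n^{-\sigma-it_\nu}-\frac1{2\pi}\int_{-\infty}^\infty L(\half+it_\nu+iv,f)\Gamma(\half-\sigma+iv)Y^{\frac12-\sigma+iv}\dd v .
\]
Moving the contour is legitimate because $\Gamma$ decays exponentially in vertical strips while $L$ grows at most polynomially in $T$ in the strip $\half\le\Real\le2$.

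In the integral we cut at $|v|\le\Lc^2$. The tail $|v|>\Lc^2$ is bounded by $e^{-c\Lc^2}T^{O(1)}Y^{1/2-\sigma}=O(T^{-10})$, using the convexity bound for $L$ and the decay $|\Gamma(\half-\sigma+iv)|\ll e^{-\pi|v|/2}$. On $|v|\le\Lc^2$ we have $|\Gamma(\half-\sigma+iv)|\ll_\sigma1$, since $\half-\sigma$ is bounded away from the poles of $\Gamma$ for $\sigma$ in compact subsets of $(\half,1]$. The truncated integral is therefore at most
\[
  C_{f,\sigma}\Lc^2Y^{\frac12-\sigma}\max_{|t'-t_\nu|\le\Lc^2}|L(\half+it',f)|.
\]

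Now suppose (i) fails. Then $V\le|L(\sigma+it_\nu,f)|\le V/4+O(T^{-10})+C\Lc^2Y^{\frac12-\sigma}\max|L|$. Since $V\ge T^\delta$, the term $O(T^{-10})$ is harmless, and we obtain
\[
  \max_{|t'-t_\nu|\le\Lc^2}|L(\half+it',f)|\ge cVY^{\sigma-\frac12}\Lc^{-2},
\]
which is (ii). Because $L$ is continuous, the maximum is attained at some $t'$ in the interval.

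The only delicate step is the uniformity of the constants in $\sigma$ and the bound on the $v$-tail, and this is routine. The hypotheses $Y\le T^2$ and $V\ge T^\delta$ are used only to keep $\Lc$ comparable to $\log Y$ and to absorb the negligible errors.
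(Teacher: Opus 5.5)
Your proof is correct and follows essentially the same route as the paper. You use Mellin inversion with the kernel $\Gamma(w)Y^w$, shift to $\Real w=\half-\sigma$ to pick up $L(\sigma+it_\nu,f)$ as the residue at $w=0$, truncate the $v$-integral at $|v|\le\Lc^2$ using convexity and the decay of $\Gamma$, and cover $n\le Y\Lc^2$ by $O(\Lc)$ dyadic blocks, with the only difference being cosmetic: you bound the two tails by $O(T^{-10})$, whereas the paper is content with $O(1)$ and absorbs it using $V\ge T^\delta$.
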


\begin{proof}
Let $s=\sigma+it_\nu$.  By Mellin inversion
\begin{equation}\label{eq:mellinlines}
  \sum_{n\ge1}\lambda_f(n)e^{-n/Y}n^{-s}=\frac1{2\pi i}\int_{(2)}L(s+w,f)\Gamma(w)Y^w\dd w .
\end{equation}
Move the contour to $\Real w=\half-\sigma$.  This is legitimate because $L(s,f)$ is entire and of polynomial growth in vertical strips.  The only pole crossed is at $w=0$, and its residue is $L(s,f)$.

On the new line $|Y^w|=Y^{\frac12-\sigma}$ and $|\Gamma(\half-\sigma+iv)|\ll e^{-|v|}$.  By the convexity bound the part of the integral with $|v|>\Lc^2$ is $O(1)$, and the rest is at most
\[
  AY^{\frac12-\sigma}\Lc^2\max_{|v|\le\Lc^2}\bigl|L(\half+i(t_\nu+v),f)\bigr|,
\]
with $A$ depending only on $f$ and $\sigma$.  On the left of \eqref{eq:mellinlines} the terms with $n>Y\Lc^2$ contribute $O(1)$, and the others fall into $O(\Lc)$ dyadic blocks.  Since $V\ge T^\delta$, one of the two alternatives holds once $T$ is large.
\end{proof}

\begin{proof}[Proof of Theorem \ref{thm:lines}]
When $\sigma=\half$ we have $m(\sigma)=2$, and the theorem is the mean square bound obtained in the proof of Theorem \ref{thm:main}. So let $\half<\sigma<1$, and put $a=\sigma-\half$ and $m=m(\sigma)$.  By \cite[Theorem 1.1]{BMN} and the Phragm\'en--Lindel\"of principle,
\[
  |L(\sigma+it,f)|\ll|t|^{\frac23(1-\sigma)}\log|t|,
\]
so we may assume that $T^\delta\le V\le V_{\max}:=C\,T^{(1-2a)/3}\Lc$, where $C$ depends only on $f$ and $\sigma$. We shall show that
\begin{equation}\label{eq:curvecount}
  R\ll T^{1+O(\delta)}V^{-m}.
\end{equation}
Apply Lemma \ref{lem:dichotomy} with
\begin{equation}\label{eq:Ylines}
  Y=\min\Bigl(T^2,\ \bigl(TV^{2-m}\bigr)^{1/(1-2a)}\Bigr),
\end{equation}
which satisfies $T^{1/2}\Lc^{-O(1)}\le Y\le T^2$, since $V\le V_{\max}$. When $\sigma=\frac58$ this is $Y=T^{4/3}V^{-8/3}$. If $Y<T^2$ then
\begin{equation}\label{eq:Ycap}
  Y^{1-2a}=TV^{2-m},
\end{equation}
and if $Y=T^2$ then $Y^{1-2a}\le TV^{2-m}$.  Let $R_1$ and $R_2$ be the numbers of points for which (i) and (ii) hold.

Consider first case (i).  We may fix $M$ at the cost of a factor $\Lc$.  Put $b_n=\lambda_f(n)e^{-n/Y}n^{-\sigma}$ for $M<n\le2M$.  By the Rankin--Selberg bound
\[
  G:=\sum_{M<n\le2M}|b_n|^2\ll M^{1-2\sigma}\Lc .
\]
Huxley's large values theorem \cite[Lemma 4.3]{Jutila} gives
\begin{equation}\label{eq:huxlines}
  R_1\ll T^{O(\delta)}\bigl(GMV^{-2}+TG^3MV^{-6}\bigr).
\end{equation}
Gallagher's lemma and the mean value theorem of Montgomery and Vaughan, as in the proof of Theorem \ref{thm:second}, give
\begin{equation}\label{eq:mvlines}
  R_1\ll T^{O(\delta)}(T+M)GV^{-2}.
\end{equation}
Here $GM\ll M^{1-2a}\Lc$, $G\ll M^{-2a}\Lc$ and $G^3M\ll M^{1-6a}\Lc^3$, and $M\le Y\Lc^2$.  Combining \eqref{eq:huxlines} and \eqref{eq:mvlines} therefore gives
\[
  R_1\ \ll\ T^{O(\delta)}\Bigl(Y^{1-2a}V^{-2}+\min\bigl(TM^{1-6a}V^{-6},\,(T+M)M^{-2a}V^{-2}\bigr)\Bigr).
\]
The first term is $\ll T^{1+O(\delta)}V^{-m}$ by \eqref{eq:Ycap}. When $M>T$ the minimum is at most twice the first term, so we may assume that $M\le T$.

If $a\le\frac16$, put $\theta=2a/(1-4a)$, which lies in $[0,1]$. The minimum is then at most the weighted geometric mean
\[
  \bigl(TM^{1-6a}V^{-6}\bigr)^{\theta}\bigl(TM^{-2a}V^{-2}\bigr)^{1-\theta}=TV^{-2/(1-4a)},
\]
since the exponent of $M$ is $\theta(1-4a)-2a=0$. For $a\le\frac18$ we have $2/(1-4a)=m$. For $\frac18\le a\le\frac16$ we have $2/(1-4a)\ge3/(1-2a)=m$, because $2(1-2a)-3(1-4a)=8a-1\ge0$.

If $a>\frac16$, then by Cauchy's inequality and the Rankin--Selberg bound the sum in case (i) is at most
\[
  \sum_{M<n\le2M}|\lambda_f(n)|n^{-\sigma}\ll M^{\frac12-a},
\]
so that $M\gg(V\Lc^{-2})^{2/(1-2a)}$. Since $1-6a<0$, this gives
\[
  TM^{1-6a}V^{-6}\ll T^{1+O(\delta)}V^{\frac{2(1-6a)}{1-2a}-6}=T^{1+O(\delta)}V^{-\frac4{1-2a}}\le T^{1+O(\delta)}V^{-m}.
\]
As $V\ge1$, in every case $R_1\ll T^{1+O(\delta)}V^{-m}$.

Consider next case (ii).  The points $t'$ lie in $[T/2,4T]$, and passing to a $1$-spaced subset costs a factor $\Lc^3$.  They satisfy $|L(\half+it',f)|\ge W$, where
\[
  W=cVY^{a}\Lc^{-2}.
\]
By \eqref{eq:R1}, and by \eqref{eq:target} applied on $[T/2,T]$, $[T,2T]$ and $[2T,4T]$ when $W\ge T^{1/4+\delta}$,
\begin{equation}\label{eq:R2lines}
  R_2\ll T^{1+O(\delta)}W^{-2},\qquad R_2\ll T^{2+O(\delta)}W^{-6}\quad\text{if }W\ge T^{1/4+\delta}.
\end{equation}

If $Y=T^2$, which can happen only when $\sigma>\frac34$, then $V^{m-2}\le T^{4a-1}$, and the first bound in \eqref{eq:R2lines} gives
\[
  R_2\ll T^{1+O(\delta)}V^{-2}T^{-4a}\le T^{1+O(\delta)}V^{-m}.
\]

Otherwise write $V=T^v$, so that \eqref{eq:Ycap} gives $Y^a=T^{a(1+(2-m)v)/(1-2a)}$. Substituting this, the first bound in \eqref{eq:R2lines} is $\ll T^{1+O(\delta)}V^{-m}$ if
\begin{equation}\label{eq:v1}
  v\le v_1:=\frac{2a}{m-2},
\end{equation}
and the second is if
\begin{equation}\label{eq:sixthcond}
  8a-1\ge v\bigl(m(1+4a)-6\bigr).
\end{equation}
Up to $O(\delta)$ the exponent of $W$ is
\[
  w(v)=v+\frac{a\bigl(1+(2-m)v\bigr)}{1-2a}.
\]

Suppose first that $\half<\sigma\le\frac58$, so that $0<a\le\frac18$ and $m=2/(1-4a)$. Then $v_1=(1-4a)/4$ and
\[
  m(1+4a)-6=-\frac{4(1-8a)}{1-4a},
\]
so \eqref{eq:sixthcond} holds when $v\ge(1-4a)/4$. The function $w(v)$ is increasing and equals $\frac14$ at $v=(1-4a)/4$. Hence for $v\ge(1-4a)/4+C'\delta$, with $C'$ large, we have $W\ge T^{1/4+\delta}$ and the second bound applies. For smaller $v$ the first bound applies, with a further loss of $T^{O(\delta)}$.

Suppose next that $\frac58\le\sigma<1$, so that $\frac18\le a<\half$ and $m=3/(1-2a)$. Then
\[
  v_1=\frac{2a(1-2a)}{1+4a},\qquad m(1+4a)-6=\frac{3(8a-1)}{1-2a},
\]
and \eqref{eq:sixthcond} reads $v\le(1-2a)/3$, which holds for $V\le V_{\max}$ up to a factor $T^{O(\delta)}$. The function $w(v)$ is linear, with
\[
  w(v_1)=\frac{3a}{1+4a}\ge\frac14,\qquad w\Bigl(\frac{1-2a}3\Bigr)=\frac13,
\]
and the first inequality is strict unless $a=\frac18$. For $v\le v_1$ the first bound applies. For $v\ge v_1$ and $a>\frac18$ we have $W\ge T^{1/4+\delta}$ once $\delta$ is small in terms of $\sigma$, and the second bound applies. The case $a=\frac18$ is covered by the previous paragraph.

In all cases $R_2\ll T^{1+O(\delta)}V^{-m}$. Together with the bound for $R_1$, this proves \eqref{eq:curvecount}.

Hence the number of $1$-spaced points in $[T,2T]$ satisfying \eqref{eq:largesigma} is $\ll T^{1+O(\delta)}V^{-m}$, for every $V\ge T^\delta$.  In each interval $[T+l,T+l+1]$ choose a point at which $|L(\sigma+it,f)|$ is maximal.  The points with $l$ even form a $1$-spaced set, and so do those with $l$ odd.  Splitting them according to the dyadic range of $|L(\sigma+it,f)|$ gives
\[
  \int_T^{2T}\bigl|L(\sigma+it,f)\bigr|^{m}\dd t\ll T^{1+m\delta}+T^{1+O(\delta)}\Lc ,
\]
and taking $\delta=\epsilon/C_0$, with $C_0$ large in terms of $\sigma$, gives the theorem.
\end{proof}

The same comparison, with $Y=T^{2/3}$, gives the sixth moment on every line to the right of the critical line.  This is used in Section~\ref{sec:cubic}.

\begin{lemma}\label{lem:sixthlines}
Let $\half<\sigma\le1$ be fixed.  Then
\[
  \int_T^{2T}\bigl|L(\sigma+it,f)\bigr|^6\dd t\ll_{f,\sigma,\epsilon}T^{2+\epsilon}.
\]
\end{lemma}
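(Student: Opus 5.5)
We follow the proof of Theorem \ref{thm:lines}, with the exponent $6$ in place of $m(\sigma)$ and $Y=T^{2/3}$. Put $a=\sigma-\half$. By the Weyl-type bound and Phragm\'en--Lindel\"of, $|L(\sigma+it,f)|\ll T^{(1-2a)/3}\Lc$, so only $T^\delta\le V\le V_{\max}$ needs treatment; smaller values contribute $T^{1+6\delta}$. It suffices to show that the number $R$ of $1$-spaced points in $[T,2T]$ satisfying \eqref{eq:largesigma} obeys
\[
  R\ll T^{2+O(\delta)}V^{-6}.
\]
The integral then follows by the same unit-interval maximum and dyadic splitting argument that ends the proof of Theorem \ref{thm:lines}.

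We apply Lemma \ref{lem:dichotomy} with $Y=T^{2/3}$ and write $R=R_1+R_2$.

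In case (i) we fix a block $M\le T^{2/3}\Lc^2$ and take $b_n=\lambda_f(n)e^{-n/Y}n^{-\sigma}$, so that $G\ll M^{-2a}\Lc\le\Lc$. Huxley's large values theorem \cite[Lemma 4.3]{Jutila}, in the form \eqref{eq:huxlines}, gives
\[
  R_1\ll T^{O(\delta)}\bigl(MV^{-2}+TMV^{-6}\bigr).
\]
The second term is at most $T^{5/3+O(\delta)}V^{-6}$. The first term is at most $T^{2/3}V^{-2}$, and since $V\le V_{\max}\le T^{1/3}\Lc$ we have $V^4\ll T^{4/3+O(\delta)}$, hence $T^{2/3}V^{-2}\ll T^{2+O(\delta)}V^{-6}$. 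Thus $R_1\ll T^{2+O(\delta)}V^{-6}$.

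In case (ii) we pass to a $1$-spaced subset of the points $t'$, which lie in $[T/2,4T]$, at a cost of $\Lc^3$. These points satisfy $|L(\half+it',f)|\ge W=cVY^a\Lc^{-2}$, and $W\ge V$ because $a>0$ and $Y\ge1$.

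1. If $W\ge T^{1/4+\delta}$, we apply \eqref{eq:target} on the three dyadic ranges $[T/2,T]$, $[T,2T]$ and $[2T,4T]$. This gives $R_2\ll T^{2+O(\delta)}W^{-6}\le T^{2+O(\delta)}V^{-6}$.

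2. Otherwise \eqref{eq:R1} gives $R_2\ll T^{1+O(\delta)}W^{-2}\le T^{1+O(\delta)}V^{-2}$. In this case $V\le W<T^{1/4+\delta}$, so $V^4\le T^{1+4\delta}$, and therefore $T^{1}V^{-2}\le T^{2+O(\delta)}V^{-6}$.

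Hence $R_2\ll T^{2+O(\delta)}V^{-6}$ as well.

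The only step needing care is the size of the first Huxley term relative to $T^2V^{-6}$. The choice $Y=T^{2/3}$ is exactly what makes it work: the condition is $V^4\le T^{4/3}$, which holds precisely because of the Weyl bound on the line $\sigma$. The factor $Y^a$ in case (ii) is not needed, since the bounds hold uniformly for $\sigma$ in compact subsets of $(\half,1]$.
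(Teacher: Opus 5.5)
Your proof is correct and is essentially the paper's own argument: Lemma~\ref{lem:dichotomy} with $Y=T^{2/3}$, then Huxley's estimate \eqref{eq:huxlines} in case (i), closed off by $V\le T^{1/3}\Lc$, and in case (ii) a split at $W=T^{1/4+\delta}$ between \eqref{eq:target} and \eqref{eq:R1}. One small slip: $W=cVY^a\Lc^{-2}\ge V$ holds only for $T$ large in terms of $f$ and $\sigma$. The paper simply uses $W\ge cV\Lc^{-2}$, which costs only logarithms. Also, your closing remark overstates the role of $Y=T^{2/3}$, since any $2\le Y\le T^{2/3}$ would do.
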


\begin{proof}
We may assume that $T^\delta\le V\le T^{1/3}\Lc$.  Apply Lemma \ref{lem:dichotomy} with $Y=T^{2/3}$.

In case (i), Huxley's large values theorem gives \eqref{eq:huxlines} with $G\ll M^{1-2\sigma}\Lc$ and $M\le Y\Lc^2$.  Since $\sigma>\half$, this is
\[
  R_1\ll T^{O(\delta)}\bigl(M^{2-2\sigma}V^{-2}+TM^{4-6\sigma}V^{-6}\bigr)\ll T^{O(\delta)}\bigl(T^{2/3}V^{-2}+T^{5/3}V^{-6}\bigr),
\]
which is $\ll T^{2+O(\delta)}V^{-6}$ because $V\le T^{1/3}\Lc$.

In case (ii) we have $W=cVY^{\sigma-\frac12}\Lc^{-2}\ge cV\Lc^{-2}$.  If $W\ge T^{1/4+\delta}$ then \eqref{eq:target} gives $R_2\ll T^{2+O(\delta)}V^{-6}$.  Otherwise $V\le T^{1/4+2\delta}$, and \eqref{eq:R1} gives $R_2\ll T^{1+O(\delta)}V^{-2}\ll T^{2+O(\delta)}V^{-6}$.

The passage to the integral is as in the proof of Theorem \ref{thm:lines}.
\end{proof}

\section{Simple zeros}
\label{sec:zeros}

\subsection{The input from de Faveri}

We use the following consequence of \cite[Propositions 2.2 and 3.3,
and Lemma 4.1]{deFaveri}. For every $\epsilon>0$, after replacing $f$
by $\bar f$ if necessary,
\begin{equation}\label{eq:53}
 \sum_{\rho=\beta+i\gamma\ \mathrm{simple}}
 |\Lambda'(\rho,f)|e^{\pi|\gamma|/2}
 (1+|\gamma|)^{-k/2+\epsilon}=\infty.
\end{equation}
The sum is over simple zeros in $0<\beta<1$. This is the only
pole-detection input needed for the branch $\theta_f\le7/9$.

At a zero, differentiating the functional equation gives
\[
 \Lambda'(\rho,f)=-\epsilon_fN^{1/2-\rho}
                         \Lambda'(1-\rho,\bar f).
\]
Thus terms with $\beta<1/2$ can be reflected to $\beta>1/2$ for the
dual form, at a bounded cost. At least one of the resulting sums for
$f$ and $\bar f$ still diverges. Since $N_f^s=N_{\bar f}^s$ and
$\theta_f=\theta_{\bar f}$, call that form $f$.

Stirling's formula gives, for $|\gamma|\ge1$,
\[
 |\Lambda'(\rho,f)|e^{\pi|\gamma|/2}
 \asymp_f |L'(\rho,f)|\,|\gamma|^{\beta+k/2-1}.
\]
Consequently, on putting
\begin{equation}\label{eq:SigmaT}
 \Sigma(T)=\sum_{\substack{\rho=\beta+i\gamma\ \mathrm{simple}\\
                  \beta\ge1/2,\ T/2<|\gamma|\le T}}
             |L'(\rho,f)|\,|\gamma|^{\beta-1/2},
\end{equation}
equation \eqref{eq:53} implies
$\sum_{T=2^j}T^{-1/2+\epsilon}\Sigma(T)=\infty$. In particular,
\begin{equation}\label{eq:Sigmalarge}
 \Sigma(T)\ge T^{1/2-2\epsilon}
 \qquad\text{for infinitely many dyadic }T.
\end{equation}
Otherwise that series would be bounded by a convergent geometric series.
We shall show that so large a value of $\Sigma(T)$ requires many zeros.

\subsection{From zeros to large values on a line}

\begin{lemma}\label{lem:zerostopoints}
Let $\half\le\beta\le1$, $c=1/\log T$, $V\ge\Lc$, and let $\mathcal Z$ be the set of simple zeros $\rho_n$ of $\Lambda(s,f)$ with $\beta\le\beta_n\le\beta+c$, $T/2<\gamma_n\le T$ and $|L'(\rho_n,f)|\ge V$.  Then there is a set of $\gg\#\mathcal Z\,\Lc^{-3}$ points $t$, $1$-spaced in $[T/3,2T]$, with $|L(\beta-c+it,f)|\ge V\Lc^{-3}$.
\end{lemma}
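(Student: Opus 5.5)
The plan is to convert each large derivative value into a large value of $L$ itself on the line $\Real s=\beta-c$, using Cauchy's integral formula, and then to thin out the resulting heights so that they become $1$-spaced.

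Fix $\rho_n=\beta_n+i\gamma_n\in\mathcal Z$ and let $r=c$. The circle $|s-\rho_n|=r$ lies in the strip $\beta-c\le\Real s\le\beta+2c$, and Cauchy's formula gives
\[
 |L'(\rho_n,f)|\le r^{-1}\max_{|s-\rho_n|=r}|L(s,f)|.
\]
So some $s_n$ on that circle has $|L(s_n,f)|\ge cV=V\Lc^{-1}$. This point is not yet on the line $\Real s=\beta-c$. To move it there, I would use the Phragm\'en--Lindel\"of (three-lines) principle, or equivalently a Poisson/Cauchy integral on a rectangle.

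Concretely, take the rectangle with left edge on $\Real s=\beta-c$, right edge on $\Real s=\beta-c+A$ for a large fixed $A$, and height $\Lc^2$ about $\gamma_n$. On the right edge $|L(s,f)|\ll1$, since the Dirichlet series converges absolutely once $A\ge2$. On the horizontal edges, multiply by a Gaussian factor $e^{(s-s_n)^2}$; it is $\le e^{-\Lc^4+O(1)}$ there, which beats the convexity growth. The maximum principle applied to $L(s,f)e^{(s-s_n)^2}$ then gives
\[
 V\Lc^{-1}\le|L(s_n,f)|\le\max\Bigl(O(1),\;e^{O(c^2)}\max_{|t-\gamma_n|\le\Lc^2}|L(\beta-c+it,f)|\Bigr).
\]
Here $e^{(s-s_n)^2}$ has modulus $\le e^{(\Real(s-s_n))^2}$, which is $e^{O(c^2)}$ on the left edge and bounded on the right. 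A better version uses a three-lines interpolation in which the weight on the left line is close to $1$, because $s_n$ is within $3c$ of it. Either form gives, since $V\ge\Lc$ and $T$ is large, a point $t_n$ with $|t_n-\gamma_n|\le\Lc^2$ and $|L(\beta-c+it_n,f)|\ge V\Lc^{-2}$. This is stronger than the required $V\Lc^{-3}$, leaving room for constants.

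It remains to make the $t_n$ $1$-spaced. Each $t_n$ lies in $[T/2-\Lc^2,T+\Lc^2]\subset[T/3,2T]$. By the Riemann--von Mangoldt formula for $L(s,f)$, an interval of length $\Lc^2+2$ contains $O(\Lc^3)$ zeros. So each unit interval receives $t_n$ from only $O(\Lc^3)$ zeros. Keep one $t_n$ per unit interval, then take alternate unit intervals. This leaves $\gg\#\mathcal Z\,\Lc^{-3}$ points that are $1$-spaced, as claimed.

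The main obstacle is the second step, moving from the small circle to the exact line $\Real s=\beta-c$ while losing only powers of $\Lc$. The interpolation must not waste the factor $|t|^{O(c)}$; this is harmless because $T^{c}=e$. The horizontal edges must also be controlled with only convexity bounds available. If the maximum principle argument gets awkward, I would instead write $L(s_n,f)$ as a Mellin integral $\frac1{2\pi i}\int L(s_n+w,f)\Gamma(w)\,dw$ shifted to $\Real w=\beta-c-\Real s_n\in[-3c,0]$. This is the same device as in Lemma \ref{lem:dichotomy}, and the residue at $w=0$ combined with the $\Gamma$ decay would isolate a large value on the left line.
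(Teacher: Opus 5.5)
Your argument works in its three-lines form, but it takes a different route from the paper's. The paper never passes through an intermediate point $s_n$. It starts from $\frac1{2\pi i}\int_{(1)}L(\rho+w,f)\Gamma(w)^2\dd w\ll1$ and moves the contour to $\Real w=\beta-c-\beta_n\in[-2c,-c]$. Since $L(\rho,f)=0$, the residue at the double pole $w=0$ is exactly $L'(\rho,f)$, so the derivative is expressed directly through values on $\Real s=\beta-c$. The factor $\Lc$ is lost in $\int_{|y|\le1}(c+|y|)^{-2}\dd y\le2c^{-1}$, which is the analogue of your radius-$c$ Cauchy loss. The bounded term coming from $\Real w=1$ is absorbed because $V\ge\Lc$.

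Your route splits this into Cauchy's estimate, which does not use $L(\rho,f)=0$, followed by a separate interpolation. That is valid, but after the first step you only know $|L(s_n,f)|\ge cV$, which is merely $\ge1$ when $V$ is near $\Lc$. For this reason your plain rectangle version does not work as written. The right edge contributes roughly $e^{A^2}\max|L|$, a constant larger than $1$, so the inequality $V\Lc^{-1}\le\max(O(1),\dots)$ yields nothing unless $V\ge C_0\Lc$ for a suitable constant $C_0$. Your claim that "either form gives" the conclusion is therefore not right for the first form.

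The three-lines version repairs this. Apply it to $L(s,f)e^{(s-s_n)^2}$ on the strip $\beta-c\le\Real s\le\beta-c+A$. The function is bounded there, and the Gaussian localises the left-line supremum to $|t-\gamma_n|\le\Lc^2$. The right line then enters only to the power $\theta\le3c/A$, that is, as a factor $1+O(c)$. You obtain $\gg V\Lc^{-1}$ on the line, which is stronger than the required $V\Lc^{-3}$.

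Your Mellin fallback, with a single $\Gamma(w)$ centred at $s_n$, is weaker than either argument. The point $s_n$ may lie arbitrarily close to $\Real s=\beta-c$, and then the loss $\int_{|y|\le1}(u+|y|)^{-1}\dd y\asymp\log(1/u)$ is unbounded. Centring at $\rho$ itself and using the double pole, as the paper does, keeps the shift at least $c$.

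Your final spacing step is the same as the paper's.
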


\begin{proof}
This is the argument of \cite[Lemma 5.1]{deFaveri} with the contour moved to $\Real s=\beta-c$ rather than to the critical line.  For $\rho=\rho_n\in\mathcal Z$,
\[
  \frac1{2\pi i}\int_{(1)}L(\rho+w,f)\Gamma(w)^2\dd w\ll1,
\]
and moving the contour to $\Real w=\beta-c-\beta_n\in[-2c,-c]$ passes the double pole of $\Gamma(w)^2$ at $w=0$, whose residue is $L'(\rho,f)$ because $L(\rho,f)=0$.

Since
\[
  |\Gamma(-u+iy)|^2\ll e^{-\pi|y|}(1+|y|)(u+|y|)^{-2}
\]
for $c\le u\le2c$, and $L(\beta-c+i(\gamma+y),f)\ll(T+|y|)^{1/2+\epsilon}$ by convexity, we obtain
\[
  |L'(\rho,f)|\le A+A\int_{|y|\le\Lc^2}|L(\beta-c+i(\gamma+y),f)|(1+|y|)(c+|y|)^{-2}\dd y+O(T^{-1})
\]
with $A$ depending only on $f$, and since $\int_{|y|\le1}(c+|y|)^{-2}\dd y\le2c^{-1}$ and $\int_{1\le|y|\le\Lc^2}|y|^{-1}\dd y\ll\log\Lc$, the integral is $O(\Lc\max_{|y|\le\Lc^2}|L(\beta-c+i(\gamma+y),f)|)$.

As $V\ge\Lc$, each $\rho_n\in\mathcal Z$ therefore produces a point $t_n$ with $|t_n-\gamma_n|\le\Lc^2$ and $|L(\beta-c+it_n,f)|\ge V\Lc^{-3}$ for $T$ large.  A unit interval contains $O(\Lc)$ zeros, so any $t$ is within $\Lc^2+1$ of $O(\Lc^3)$ of the $\gamma_n$, and a maximal $1$-spaced subset of $\{t_n\}$ has $\gg\#\mathcal Z\Lc^{-3}$ elements.
\end{proof}

\subsection{Large values on a line}

\begin{proposition}\label{prop:line}
Let $\half-\frac1{\log T}\le\beta\le1$, put $a=\beta-\half$, and let $\{t_\nu\}$ be a set of $R\le T^{1/5}$ points, $1$-spaced in $[T/3,2T]$, with $|L(\beta+it_\nu,f)|\ge V\ge T^{\delta}$.  Then there are $M$ and $W$ with
\begin{equation}\label{eq:MW}
  1\le M\le T^{1+\delta},\qquad VM^{a}T^{-\delta}\le W\le M^{1/2}T^{\delta},
\end{equation}
such that one of the following holds.
\begin{enumerate}
\item[(a)] $M\le T^{2/3+\delta}R^{2/3}$ and $R\ll T^{\delta}\bigl(MW^{-2}+TMW^{-6}\bigr)$;
\item[(b)] $M\ge\frac12T^{2/3+\delta}R^{2/3}$, and if moreover $W\ge T^{1/4+\delta}$ and $R\le T^{1/2-2\delta}$ then $R\ll T^{2+O(\delta)}W^{-6}$.
\end{enumerate}
\end{proposition}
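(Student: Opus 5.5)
We reduce each large value of $L(\beta+it_\nu,f)$ to a large value of a dyadic piece of a Dirichlet polynomial. The cases then follow from the same dichotomy as in Section~\ref{sec:setup}: Lemma~\ref{lem:maximal} gives (a), and Theorem~\ref{thm:block} gives (b).

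First apply Lemma~\ref{lem:afe} at $\beta+it_\nu$ with $X=\sqrt C$. This is the version valid for $\half-2/\log|t|\le\beta\le1$. The first sum has terms $\lambda_f(n)n^{-\beta-it}V(n/X)$. The second sum has terms $\lambda_{\bar f}(n)n^{-(1-\beta)+it}W(nX/C)$, multiplied by $\gamma_f$ with $|\gamma_f|\asymp T^{1-2\beta}$. Since $|L|\ge V\ge T^\delta$, one of the two sums exceeds $V/3$ at $\gg R$ of the points; fix that sum, with $f$ replaced by $\bar f$ and $t$ by $-t$ if necessary. Write $n^{-\beta}=n^{-1/2}n^{-a}$ and split dyadically with a smooth partition. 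The tail $n\ge T^{1+\delta}$ is negligible by the decay of $V$. So for some $M\le T^{1+\delta}$ and $\gg R\Lc^{-1}$ points,
\[
  \Bigl|\sum_{n\asymp M}\lambda_f(n)n^{-1/2-it_\nu}\phi_{t_\nu}(n)\Bigr|\ge W,
\]
with $\phi_t(n)=(n/M)^{-a}V_t(n/\sqrt C)\times(\text{dyadic bump})$ and $W\asymp VM^{a}\Lc^{-1}$. This gives the lower bound $VM^aT^{-\delta}\le W$ in \eqref{eq:MW}.

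For the dual sum, the factor $|\gamma_f|n^{-(1-\beta)}=T^{1-2\beta}n^{-1/2}n^{a}$ with $n\asymp M$, $M\ll T^{1+\delta}/X$, gives the same normalisation up to $T^{O(\delta)}$. This uses $(T/M)^{2a}\cdot$ the dual length, which is the reason for $T^{-\delta}$ in \eqref{eq:MW}. The upper bound $W\le M^{1/2}T^\delta$ is trivial from Deligne's bound. Because $\phi_t$ is only a Mellin-truncated gamma ratio times a power, it is holomorphic and bounded in the region required by Theorem~\ref{thm:block}.

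If $M\le T^{2/3+\delta}R^{2/3}$, remove the smooth weight by partial summation as in the treatment of \eqref{eq:alt1}. Then apply Lemma~\ref{lem:maximal} with $a_n=\lambda_f(n)n^{-1/2}$ supported on $[M,2M]$, so $G\ll\Lc$. This gives (a). If $M\ge\frac12T^{2/3+\delta}R^{2/3}$, build the Farey system and the partition $\{\omega_j^{(q)}\}$ of Section~\ref{sec:setup} with $Q=(TR^*)^{2/3}$, where $R^*=R$. The block $n\asymp M$ then lies above the initial cutoff $T^\delta Q$. Decompose the block by the partition: the pieces $\omega_j^{(q)}$ with $j\in\Bc(M')$ for the $O(1)$ values $M'\asymp M$ cover it. Pigeonholing gives one $M'$ with the sum of size $\gg W$ at $\gg R\Lc^{-1}$ points. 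When $W\ge T^{1/4+\delta}$ and $R\le T^{1/2-2\delta}$, the hypotheses of Theorem~\ref{thm:block} hold (with $R^*$ within $\Lc^2$ of the surviving count), and it gives $R\ll T^{2+O(\delta)}W^{-6}$.

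The main obstacle is matching the dyadic block to the partition, which is defined on the fixed system $\Fc$ rather than on dyadic ranges. I would handle it by multiplying the block weight into $\phi_t$: the bump is entire in $z$ if chosen as a Gaussian-type Mellin integral, so holomorphy survives. Alternatively, replace dyadic splitting by the telescoped partition directly, exactly as in the dichotomy of Section~\ref{sec:setup}. The second option avoids the issue, and I would take it. A secondary check is the weight-one case near $\beta=1$, where Lemma~\ref{lem:afe} already cancels the dual pole.
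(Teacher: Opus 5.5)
Your committed route is exactly the paper's proof. You decompose by the telescoped partition of Section~\ref{sec:setup} into the initial segment and the blocks $\Bc(M)$. Partial summation and Lemma~\ref{lem:maximal} on dyadic pieces of the initial segment give (a), and Theorem~\ref{thm:block} with amplitude $\phi_t(z)=c_f(z/M)^{-a}V_t(z/\sqrt{C(t)})$ on a block gives (b). This includes the observation that the dual factor $T^{1-2\beta}n^{a}$ is at most $T^{2a\delta}M^{-a}$ when $M\le T^{1+\delta}$; your ``$M\ll T^{1+\delta}/X$'' should read $M\ll CT^{\delta}/X\asymp T^{1+\delta}$.

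Abandoning the dyadic-first version in your main text is necessary, not merely convenient. A compactly supported dyadic bump cannot be holomorphic on the region Theorem~\ref{thm:block} requires. Moreover, dyadic ranges with $M\ge\frac12T^{\delta}Q$ can reach below $N_0^{(q)}$, where $\sum_j\omega^{(q)}_j\ne1$, so the block pieces would not cover them.
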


\begin{proof}
By splitting $[T/3,2T]$ into three intervals and replacing $T$ by $T/3$ or $T/2$ we may take the points in $[T,2T]$.  Apply Lemma \ref{lem:afe} at $\beta+it$ with $X=\sqrt C$.  The second sum is
\[
  \gamma_f(\beta+it)\sum_n\lambda_{\bar f}(n)n^{-(1-\beta)+it}W_t(n/\sqrt C)
\]
with $|\gamma_f|\asymp T^{1-2\beta}$, and $n^{-(1-\beta)}=n^{-\frac12}n^{a}$.  The first is
\[
  \sum_n\lambda_f(n)n^{-\beta-it}V_t(n/\sqrt C)
\]
with $n^{-\beta}=n^{-\frac12}n^{-a}$.

For at least half the points one of the two sums is $\ge V/3$.  We treat the first, the second being the same with $\bar f$ and $-t$, or equivalently with $f$, $t$ and the conjugate amplitude $\psi^*(z)=\overline{\psi(\bar z)}$, and with the factor $T^{1-2\beta}n^{a}$ in place of $n^{-a}$.  On a block $n\asymp M\le T^{1+\delta}$ this factor is at most $T^{2a\delta}M^{-a}$, so the second sum leads to a larger threshold $W$ and the same conclusion.

Build the Farey system, the intervals $I_q$ and the partition $\{\omega^{(q)}_j\}$ of Section~\ref{sec:setup} using the present count $R$, so $Q=(TR)^{2/3}$, and decompose as in Section~\ref{sec:setup} into the initial segment, the blocks $\Bc(M)$ with $T^\delta Q/2\le M\le\sqrt C\,T^{\delta/20}$, and negligible tails.  Either the initial segment or some block is $\ge V\Lc^{-1}$ for at least $R\Lc^{-1}$ of the points.

If it is the initial segment, split it into dyadic pieces $M<n\le2M$ with $M\le T^\delta Q$.  Some piece is $\ge V\Lc^{-2}$ for at least $R\Lc^{-2}$ points.  On such a piece $n^{-\beta}=M^{-a}(n/M)^{-a}n^{-\frac12}$, and $(n/M)^{-a}$ times the weight of the initial segment has total variation $O(1)$, so by partial summation there is $N_1=N_1(t_\nu)\in(M,2M]$ with
\[
  |\sum_{M<n\le N_1}\lambda_f(n)n^{-\frac12-it_\nu}|\gg VM^{a}\Lc^{-2}=:W.
\]

Lemma \ref{lem:maximal}, applied with $N=2M$ and coefficients
$a_n=\lambda_f(n)n^{-1/2}$ supported on $(M,2M]$, removes the
point-dependent endpoint. Since $G\ll_f1$, it gives
\[
  R\ll(MW^{-2}+TMW^{-6})T^{\delta}.
\]
This is (a), and $W\ll M^{1/2}\Lc$ because the sum is trivially so bounded.

If it is a block $\Bc(M)$, then since $n^{-\beta}=M^{-a}\cdot n^{-\frac12}(n/M)^{-a}$ the block equals $c_f^{-1}M^{-a}$ times the block sum with amplitude
\[
  \phi_t(z)=c_f(z/M)^{-a}V_t(z/\sqrt{C(t)}),
\]
where $c_f>0$ is a constant depending on $f$ only, the constants in Lemma \ref{lem:afe} being uniform in $\beta$.

This $\phi_t(z)$ is holomorphic in $t$ and $z$ on the region required by Theorem \ref{thm:block}, since $(z/M)^{-a}$ is holomorphic off the negative axis and $V_t(y)$ and $\sqrt{C(t)}$ are as in Section~\ref{sec:transformed}, and it is bounded by $1$ there if $c_f$ is small, because $|(z/M)^{-a}|\le8^{a}e^{a\pi/4}$ on the region and $V_t\ll_f1$ in the sector.

So the block sum with amplitude $\phi_t(z)$ is $\ge W:=c_fVM^{a}\Lc^{-1}$ for at least $R\Lc^{-1}$ points.  If $W\ge T^{1/4+\delta}$ and $R\le T^{1/2-2\delta}$, Theorem \ref{thm:block} applies, with $R^*=R$ and these $R\Lc^{-1}$ points, and gives $R\ll T^{2+O(\delta)}W^{-6}$.  This is (b).
\end{proof}

\subsection{Counting the zeros}

The next lemma bounds the number of large values directly. In its
application $M=T^m$, $W=T^w$ and $R=T^r$. A class of zeros contributing
$T^{1/2-o(1)}$ to \eqref{eq:SigmaT} forces
$w\ge\half-a-r+am-o(1)$, where $a=\beta-\half$.

\begin{lemma}\label{lem:opt}
Let $0\le a\le1/3$, $0\le m\le1$ and $0\le r\le1/5$, and suppose
\begin{equation}\label{eq:constraints}
 \half-a-r+am\le w\le m/2.
\end{equation}
Then $r\ge(1-2a)/(5-4a)$ whenever one of the following alternatives holds:
\begin{enumerate}
\item[(a)] $r\ge3m/2-1$ and $r\le\max(m-2w,1+m-6w)$;
\item[(b)] $r\le3m/2-1$, and $r\le2-6w$ if $w\ge1/4$.
\end{enumerate}
If the inequalities in a branch are relaxed by $O(\eta)$, the conclusion
is $r\ge(1-2a)/(5-4a)-O(\eta)$, uniformly for $0\le a\le1/3$.
\end{lemma}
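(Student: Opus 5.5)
The statement is a finite linear-programming claim in $(m,w,r)$ for fixed $a$, so I would prove it by direct elimination. Write $r_0=(1-2a)/(5-4a)$ and $w_0=\half-a-r+am$, so that \eqref{eq:constraints} says $w_0\le w\le m/2$. Two consequences are used repeatedly. From $w_0\le m/2$ one gets $m(1-2a)\ge1-2a-2r$, that is, $m\ge1-2r/(1-2a)$. The inequality $r\gtrless3m/2-1$ bounds $m$ on one side by $\frac23(1+r)$. In each branch I substitute the lower bound $w\ge w_0$ into the upper bound on $r$. This gives a linear inequality in $m$ and $r$. I then eliminate $m$ using whichever of the two bounds on $m$ points the right way, according to the sign of its coefficient.

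\emph{Branch (a), first option $r\le m-2w$.} Substituting $w\ge w_0$ gives $r\ge(1-2a)(1-m)$. Since $r\ge3m/2-1$ gives $m\le\frac23(1+r)$, we get $3r\ge(1-2a)(1-2r)$, which is exactly $r\ge r_0$.

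\emph{Branch (a), second option $r\le1+m-6w$.} Substituting $w\ge w_0$ gives $5r\ge2-6a-m(1-6a)$. If $a\le\frac16$, insert $m\le\frac23(1+r)$. This yields $r\ge(4-6a)/(17-12a)$, which exceeds $r_0$ because cross-multiplication leaves the positive difference $3$. If $\frac16<a\le\frac13$, the coefficient of $m$ changes sign, so I use instead $m\ge1-2r/(1-2a)$. This gives the linear inequality
\[
  r\Bigl(5-\frac{2(6a-1)}{1-2a}\Bigr)\ge1,
\]
that is, $r\ge(1-2a)/(7-22a)$. The denominator is positive for $a\le\frac13$, and this exceeds $r_0$ since $7-22a\le5-4a$. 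The crude bound $m\ge0$ would not suffice for $a$ near $\frac13$, so this is the one step I would check with care.

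\emph{Branch (b), case $w\ge\frac14$.} Here $m\ge\frac23(1+r)$. From $r\le2-6w\le2-6w_0$ we get $5r\ge1-6a+6am\ge1-2a-4ar$, which is again exactly $r\ge r_0$.

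\emph{Branch (b), case $w<\frac14$.} Then $w_0<\frac14$, so $r>\frac14-a+am\ge\frac14-a+\frac23a(1+r)$. This gives $r>(3-4a)/(12-8a)$, and cross-multiplying shows it exceeds $r_0$, the difference being $3$.

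Finally, the relaxation. Every step is a linear manipulation whose coefficients are bounded for $0\le a\le\frac13$. The final divisions are by $5-4a$, $17-12a$, $7-22a$, $12-8a$ and $1-2a$, all of which are bounded below by a positive absolute constant on this range. So relaxing each hypothesis by $O(\eta)$ moves each conclusion by $O(\eta)$, uniformly in $a$. I expect no serious obstacle. The only real care is choosing the correct bound on $m$ in the sign-changing case, and checking that the restrictions $r\le\frac15$ and $m\le1$ are never needed.
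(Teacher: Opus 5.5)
Your route is the paper's: the same case split, the same substitution of $w\ge\frac12-a-r+am$ into the upper bound for $r$, and the same ways of eliminating $m$. You use $m\le\frac23(1+r)$ in branch (a) and $m\ge\frac23(1+r)$ in branch (b). When $a>\frac16$ in the second option of (a), you use the constraint $w_0\le m/2$; the paper phrases this as intersecting the two lower bounds for $r$ at $m=(1+2a)/(3+2a)$, which is the same elimination.

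However, the step you singled out as delicate has a sign error. Substituting $m\ge1-2r/(1-2a)$ into $5r\ge2-6a+(6a-1)m$ gives
\[
  5r\ \ge\ 1-\frac{2(6a-1)}{1-2a}\,r,\qquad\text{that is,}\qquad r\cdot\frac{3+2a}{1-2a}\ \ge\ 1,
\]
so $r\ge(1-2a)/(3+2a)$. Your inequality $r\ge(1-2a)/(7-22a)$ does not follow and is false. The denominator $7-22a$ vanishes at $a=\frac7{22}<\frac13$ and is negative beyond, contrary to your positivity claim. At $a=\frac3{10}$ your bound reads $r\ge1$, yet $m=\frac49$, $w=\frac29$, $r=\frac19$ satisfies every hypothesis of branch (a). With the correct bound, $(1-2a)/(3+2a)\ge(1-2a)/(5-4a)$ is equivalent to $a\le\frac13$. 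This is the one place where that hypothesis is really needed; your comparison $7-22a\le5-4a$ only uses $a\ge\frac19$. In the relaxation argument the divisor is then $3+2a$, not $7-22a$, and it is bounded below on $[0,\frac13]$.

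There is also a sign slip in branch (b) with $w\ge\frac14$. Since $6am\ge4a(1+r)$, one gets $5r\ge1-2a+4ar$, not $1-2a-4ar$, and only the former is $(5-4a)r\ge1-2a$. With these two corrections your argument coincides with the paper's. The remaining differences are immaterial; for instance, you use $m\ge\frac23(1+r)$ where the paper uses $m\ge\frac23$ when $w<\frac14$. For the relaxed statement, the case distinction at $w=\frac14$ must itself be shifted by $O(\eta)$. When $w=\frac14+O(\eta)$ you use the low-threshold inequality, as the paper does.
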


\begin{proof}
In case (b), $m\ge2(1+r)/3\ge2/3$.
If $w<1/4$, then \eqref{eq:constraints} gives
\[
 r>\frac14-a+am\ge\frac14-\frac a3
   >\frac{1-2a}{5-4a},
\]
because the difference in the last comparison is
$(16a^2-8a+3)/(12(5-4a))>0$.
If $w\ge1/4$, use $r\le2-6w$ and \eqref{eq:constraints} to obtain
\[
 5r\ge1-6a+6am\ge1-2a+4ar.
\]
This is the desired inequality.

In case (a), $m\le2(1+r)/3$. If $r\le m-2w$, then
\eqref{eq:constraints} gives
\[
 r\ge(1-2a)(1-m)\ge\frac{1-2a}{3}(1-2r),
\]
which again is $(5-4a)r\ge1-2a$.

It remains to consider $r\le1+m-6w$. In this case
\begin{equation}\label{eq:shortcount}
 5r\ge2-6a-(1-6a)m,\qquad
 r\ge(\half-a)(1-m).
\end{equation}
For $a\le1/6$, substitute $m\le2(1+r)/3$ in the first inequality:
\[
 r\ge\frac{4-6a}{17-12a}
   =\frac{1-2a}{5-4a}
       +\frac{3}{(17-12a)(5-4a)}.
\]
For $a\ge1/6$, the first lower bound in \eqref{eq:shortcount}
increases with $m$ and the second decreases. Their intersection is
$m=(1+2a)/(3+2a)$, so
\[
 r\ge\frac{1-2a}{3+2a}\ge\frac{1-2a}{5-4a},
\]
where the last step uses $a\le1/3$.

All substitutions remain valid with an $O(\eta)$ error. The divisors
$5-4a$, $17-12a$ and $3+2a$ stay bounded away from zero on $[0,1/3]$.
In the threshold case $w=1/4+O(\eta)$, use the low-threshold inequality
with the same error. This proves the uniform assertion.
\end{proof}

\begin{proof}[Proof of Corollary \ref{cor:zeros}]
Let $\theta_f\le\frac79$.  The case $\theta_f>\frac79$ is \cite[Corollary 4.9]{deFaveri}.  Let $C_0$ be a sufficiently large absolute constant and let $0<\delta\le\epsilon$.  Suppose that $N^s_f(T)\le T^{E-C_0\epsilon}$ for all large $T$, where $E=E(\theta_f)$ is the exponent of Corollary~\ref{cor:zeros}, and let $T$ be one of the values in \eqref{eq:Sigmalarge}.

Split the zeros in \eqref{eq:SigmaT} according to the interval $[\beta,\beta+c]$, $c=1/\log T$, containing $\beta_n$, of which there are $O(\Lc)$ with $\half\le\beta\le\theta_f$, according to the sign of $\gamma_n$, and according to the dyadic interval $[V,2V)$ containing $|L'(\rho_n,f)|$, with $V\ge T^{2\delta}$, together with one further class for $|L'(\rho_n,f)|<T^{2\delta}$.  Some class contributes at least $T^{\frac12-3\epsilon}$ to $\Sigma(T)$.  If it is the class with $|L'|<T^{2\delta}$, then
\[
  2T^{2\delta}T^{\theta_f-\frac12}N^s_f(T)\ge T^{\frac12-3\epsilon}
\]
and
\[
  N^s_f(T)\ge T^{1-\theta_f-2\delta-4\epsilon},
\]
which exceeds $T^{E-C_0\epsilon}$ since $E\le1-\theta_f$.

Otherwise, with $V=T^v$, the class has $\gg T^{\frac12-3\epsilon}V^{-1}T^{\frac12-\beta}=T^{1-\beta-v-3\epsilon}$ members, and at most $N^s_f(T)\le T^{1/5}$.  If its zeros have negative ordinates we replace $f$ by $\bar f$, which changes neither $\theta_f$ nor $N^s_f(T)$, since the zeros $\bar\rho_n$ of $\Lambda(s,\bar f)$ have the same real parts, positive ordinates, and $|L'(\bar\rho_n,\bar f)|=|L'(\rho_n,f)|$.

Since $V\ge T^{2\delta}\ge\Lc$, Lemma \ref{lem:zerostopoints} gives a set of $R$ points, $1$-spaced in $[T/3,2T]$, with $|L(\beta-c+it,f)|\ge V\Lc^{-3}$, and, passing to a subset,
\[
  T^{1-\beta-v-3\epsilon}\Lc^{-4}\le R\le T^{1/5}.
\]
Apply Proposition \ref{prop:line} to these points on the line $\Real s=\beta-c$ with $V\Lc^{-3}\ge T^\delta$ in place of $V$, which changes the exponents by $O(\delta)$, since $T^{cm}\le e$.  In case (b) its hypothesis $R\le T^{1/2-2\delta}$ holds.
Write $M=T^m$, $W=T^w$ and $R=T^r$, and put $a=\beta-\half$.
The lower bound for $R$ and Proposition \ref{prop:line} give
\[
 v+r\ge\half-a-O(\epsilon),\qquad
 w\ge v+am-O(\delta),\qquad w\le m/2+O(\delta).
\]
Eliminating $v$ gives \eqref{eq:constraints} with an $O(\epsilon)$ error.
The short- and long-block alternatives give (a) and (b) of
Lemma \ref{lem:opt}, also with that error; $0\le r\le1/5$ by construction.
The lemma therefore yields
\[
 N_f^s(T)\ge R\ge
 T^{\frac{1-2a}{5-4a}-O(\epsilon)}
 =T^{E(\beta)-O(\epsilon)}.
\]
Since $E(\theta)$ decreases on $[\half,\frac79]$ and $\beta\le\theta_f\le\frac79$, this is at least $T^{E-O(\epsilon)}$, which contradicts $N^s_f(T)\le T^{E-C_0\epsilon}$ once $C_0$ is large enough.  The unconditional statement follows because $E(\theta)\ge E(\frac79)=\frac4{35}$ for $\half\le\theta\le1$. If the simple zeros lie on the critical line then $\theta_f=\half$, and $E(\half)=\frac15$.
\end{proof}

\begin{remark}\label{rem:why}
The gain comes from retaining the length of the block that produces a
large value. On $\Real s=\beta=1/2+a$, a block at scale $M$ carries the
factor $M^{-a}$. Proposition \ref{prop:line} uses this factor before
counting the large values; Lemma \ref{lem:opt} then gives the required
number directly. Equality is possible at the transition
$M=T^{2/3}R^{2/3}$, with
\[
 m=\frac{4(1-a)}{5-4a},\qquad r=\frac{1-2a}{5-4a}.
\]

Using the sixth moment only after summing over zeros loses this
information. H\"older's inequality would give
$\sum|L'(\rho,f)|\ll N_f^s(T)^{5/6}T^{1/3+\epsilon}$
\cite[Lemma 5.2]{deFaveri}; together with \eqref{eq:Sigmalarge} and
$\beta\le\theta_f$, this yields only $(4-6\theta_f)/5$.
At level one a stronger pole-location input in \cite{deFaveri} instead
gives $1/5$ for every $\theta_f$. The general-level improvement here
does not change his threshold $7/9$ or his result above it.
\end{remark}

\section{Zeros off the critical line}
\label{sec:density}

Throughout this section $\frac34<\sigma<1$, $T$ is large and $\delta>0$ is small.  The zeros of $L(s,\bar f)$ are the complex conjugates of those of $L(s,f)$.  So it is enough to bound, for $f$ and for $\bar f$, the number $N^+_f(\sigma,T)$ of zeros $\rho=\beta+i\gamma$ of $L(s,f)$ with $\beta\ge\sigma$ and $T\le\gamma\le2T$.

\subsection{Zero detection}

For $\Real s>1$ write
\begin{equation}\label{eq:muf}
  \frac1{L(s,f)}=\sum_{n\ge1}\frac{\mu_f(n)}{n^s}.
\end{equation}
The Euler product of $L(s,f)$ shows that $\mu_f$ is multiplicative, with $\mu_f(p)=-\lambda_f(p)$, $\mu_f(p^2)=\xi(p)$ and $\mu_f(p^k)=0$ for $k\ge3$.  Here $\xi(p)=0$ for $p\mid N$.  Hence $|\mu_f(n)|\le d(n)$.

Let $X=T^\delta$ and $M_X(s)=\sum_{n\le X}\mu_f(n)n^{-s}$.  Then
\begin{equation}\label{eq:cn}
  L(s,f)M_X(s)=\sum_{n\ge1}\frac{c_n}{n^s},\qquad c_1=1,\qquad c_n=0\ \ (1<n\le X),\qquad|c_n|\le d(n)^2 .
\end{equation}

\begin{lemma}\label{lem:detect}
Let $T^\delta\le Y\le T^2$, and let $\rho=\beta+i\gamma$ be a zero of $L(s,f)$ with $\beta\ge\sigma$ and $T\le\gamma\le2T$.  Then at least one of the following holds.
\begin{enumerate}
\item[(I)] We have
\[
  \Bigl|\sum_{X<n\le Y\Lc^2}c_nn^{-\rho}e^{-n/Y}\Bigr|\ge\frac13 .
\]
\item[(II)] There is $t'$ with $|t'-\gamma|\le\Lc^2$ and
\[
  |L(\half+it',f)|\ge Y^{\sigma-\frac12}T^{-\delta}.
\]
\end{enumerate}
\end{lemma}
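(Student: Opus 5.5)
The plan is to follow the proof of Lemma \ref{lem:dichotomy}, with the mollified series $L(s,f)M_X(s)$ in place of $L(s,f)$. Put $s=\rho$. For $\Real w$ large, Mellin inversion gives
\[
  \sum_{n\ge1}c_nn^{-\rho}e^{-n/Y}=\frac1{2\pi i}\int_{(2)}L(\rho+w,f)M_X(\rho+w)\Gamma(w)Y^w\dd w .
\]
Next I move the contour to $\Real w=\half-\beta\le\half-\sigma<0$. The pole at $w=0$ has residue $L(\rho,f)M_X(\rho)=0$, because $\rho$ is a zero, and no other pole is crossed. This vanishing residue is the whole point of the zero-detection set-up.

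On the left side, the term $n=1$ contributes $e^{-1/Y}=1+O(T^{-\delta})$. By \eqref{eq:cn} the terms with $1<n\le X$ vanish. The tail $n>Y\Lc^2$ is $O(T^{-10})$, since $e^{-n/Y}\le e^{-\Lc^2}$ and $|c_n|\le d(n)^2$. Hence
\[
  1+O(T^{-\delta})+\sum_{X<n\le Y\Lc^2}c_nn^{-\rho}e^{-n/Y}=\frac1{2\pi i}\int_{(\frac12-\beta)}L(\rho+w,f)M_X(\rho+w)\Gamma(w)Y^w\dd w .
\]
If (I) fails, the left side has modulus at least $\frac12$ for large $T$, and the right side must then have modulus at least $\frac12$.

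On the new line $\rho+w=\half+i(\gamma+v)$. There $|Y^w|=Y^{\frac12-\beta}\le Y^{\frac12-\sigma}$ and $|\Gamma(\half-\beta+iv)|\ll e^{-|v|}$, the implied constant being uniform for $\beta\in[\sigma,1]$ since $\half-\beta$ stays away from $0$ and from $-1$. Trivially $|M_X(\half+i\tau)|\le\sum_{n\le X}d(n)n^{-1/2}\ll X^{1/2}\Lc=T^{\delta/2}\Lc$. The range $|v|>\Lc^2$ contributes $O(T^{-1})$ by convexity together with the factor $e^{-|v|}$. The range $|v|\le\Lc^2$ contributes at most
\[
  A\,T^{\delta/2}\Lc^3\,Y^{\frac12-\sigma}\max_{|v|\le\Lc^2}\bigl|L(\half+i(\gamma+v),f)\bigr| .
\]
Setting this at least $\frac13$ gives a point $t'$ with $|t'-\gamma|\le\Lc^2$ and $|L(\half+it',f)|\gg Y^{\sigma-\frac12}T^{-\delta/2}\Lc^{-3}\ge Y^{\sigma-\frac12}T^{-\delta}$ for large $T$, which is (II).

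The only delicate points are bookkeeping. The bound for $M_X$ on the critical line must cost only $T^{\delta/2}$, which is why $X=T^\delta$. The $\Gamma$-factor must be bounded uniformly in $\beta$, which holds because $\beta\ge\sigma>\frac34$ keeps $\half-\beta$ in $[-\half,\half-\sigma]$, away from the poles of $\Gamma$. With these in place the argument goes through with no real obstacle.
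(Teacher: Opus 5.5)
Your proposal is correct and follows the paper's proof essentially line by line. You apply Mellin inversion to the mollified series, shift to $\Real w=\half-\beta\in[-\half,-\frac14]$ where the residue $L(\rho,f)M_X(\rho)$ vanishes, bound $M_X$ trivially by $T^{\delta/2}\Lc$, use $Y^{\frac12-\beta}\le Y^{\frac12-\sigma}$, and truncate both the $v$-integral and the $n$-sum at $\Lc^2$, with only cosmetic differences in the thresholds $\frac12,\frac13$ versus the paper's $\frac14$.
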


\begin{proof}
By Mellin inversion and \eqref{eq:cn},
\[
  e^{-1/Y}+\sum_{n>X}c_nn^{-\rho}e^{-n/Y}=\frac1{2\pi i}\int_{(2)}L(\rho+w,f)M_X(\rho+w)\Gamma(w)Y^w\dd w .
\]
Move the contour to $\Real w=\half-\beta$, which lies in $[-\frac12,-\frac14]$.  The residue at $w=0$ is $L(\rho,f)M_X(\rho)=0$.  On the new line $|\Gamma(\half-\beta+iv)|\ll e^{-|v|}$, and
\[
  |M_X(\half+it)|\le\sum_{n\le X}d(n)n^{-1/2}\ll T^{\delta/2}\Lc .
\]
The part of the integral with $|v|>\Lc^2$ is $o(1)$, and so is the part of the sum with $n>Y\Lc^2$.

Suppose that (I) fails.  Then the rest of the integral is at least $\frac14$ in absolute value, and so
\[
  1\ll T^{\delta/2}\Lc^3Y^{\frac12-\beta}\max_{|v|\le\Lc^2}\bigl|L(\half+i(\gamma+v),f)\bigr| .
\]
Since $Y^{\frac12-\beta}\le Y^{\frac12-\sigma}$, this gives (II) for $T$ large.
\end{proof}

Choose a maximal set of zeros counted by $N^+_f(\sigma,T)$ whose ordinates are $3\Lc^4$-spaced.  Since $N_f(\sigma,t+1)-N_f(\sigma,t)\ll\Lc$ at every level, this set has $\gg N^+_f(\sigma,T)\Lc^{-5}$ elements.  Let $R_{\mathrm I}$ and $R_{\mathrm{II}}$ be the numbers of its zeros for which (I) and (II) of Lemma \ref{lem:detect} hold.  Then
\begin{equation}\label{eq:NR}
  N^+_f(\sigma,T)\ll\Lc^5\bigl(R_{\mathrm I}+R_{\mathrm{II}}\bigr).
\end{equation}

\subsection{Zeros of the second kind}

The points $t'$ given by (II) are $1$-spaced and lie in $[T/2,3T]$.  So if $Y^{\sigma-\frac12}T^{-\delta}\ge T^{\frac14+\delta}$, then \eqref{eq:target}, applied on $[T/2,T]$, $[T,2T]$ and $[2T,4T]$, gives
\begin{equation}\label{eq:RII}
  R_{\mathrm{II}}\ll T^{2+O(\delta)}Y^{3-6\sigma}.
\end{equation}

\subsection{Zeros of the first kind}

For these zeros we use an exponent pair $(\kappa,\lambda)$ \cite{GrahamKolesnik}.  For the sums that occur here this means that
\begin{equation}\label{eq:pair}
  \sum_{M<n\le u}n^{-it}\ll\Bigl(\frac{|t|}M\Bigr)^{\kappa}M^{\lambda}\qquad(M<u\le2M,\ \ |t|\ge M).
\end{equation}
For $1\le|t|\le M$ the Kusmin--Landau inequality \cite{GrahamKolesnik} gives the bound $M/|t|$ instead.

\begin{lemma}\label{lem:classI}
Let $(\kappa,\lambda)$ be an exponent pair with $\kappa>0$, and suppose that
\begin{equation}\label{eq:pairrange}
  \sigma\ge\frac{1+\kappa+\lambda}{2+2\kappa}.
\end{equation}
Then
\[
  R_{\mathrm I}\ll T^{O(\delta)}\Bigl(Y^{2-2\sigma}+TY^{-\frac{(2+2\kappa)\sigma-1-\kappa-\lambda}{2\kappa}}\Bigr).
\]
\end{lemma}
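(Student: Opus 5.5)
We plan to bound $R_{\mathrm I}$ by combining the mean value theorem with the exponent-pair bound. This is the Huxley--Hal\'asz large-value method in the form used by Ivi\'c and by Zhang, Zhai and Zhang, with the Dirichlet polynomial of (I) in place of theirs.

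First we localise. By dyadic splitting of $X<n\le Y\Lc^2$, each zero of the first kind has a block $N<n\le2N$, with $X\le N\le Y\Lc^2$, on which
\[
  \Bigl|\sum_{N<n\le2N}c_nn^{-\rho}e^{-n/Y}\Bigr|\gg\Lc^{-1}.
\]
Fixing $N$ costs a factor $\Lc$. Writing $n^{-\beta}=N^{-\beta}(n/N)^{-\beta}$ and using partial summation in $\beta\ge\sigma$, we obtain
\[
  \Bigl|\sum_{N<n\le2N}a_nn^{-i\gamma}\Bigr|\gg N^{\sigma}T^{-\delta},\qquad a_n=c_ne^{-n/Y}(n/N)^{-\beta_0},
\]
for a fixed $\beta_0$, after absorbing the variation into an integral over a small set of exponents. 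The coefficients satisfy $|a_n|\le d(n)^2$, so $G=\sum|a_n|^2\ll NT^{\delta}$. The ordinates are $3\Lc^4$-spaced, hence $1$-spaced.

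Second, we apply Hal\'asz's inequality in Bombieri's form \cite[Lemma 4.5]{Jutila} to the vectors $(n^{-i\gamma_r})$. With $U=N^\sigma T^{-\delta}$ this gives
\[
  R\,U^2\ll G\Bigl(N+\sum_{r\ne r'}\Bigl|\sum_{N<n\le2N}n^{-i(\gamma_r-\gamma_{r'})}\Bigr|\Bigr),
\]
after a smooth weight is inserted on $(N,2N]$ so that the partial sums are controlled. We bound the off-diagonal sums using \eqref{eq:pair} when $|\gamma_r-\gamma_{r'}|\ge N$, which gives $\le(T/N)^{\kappa}N^{\lambda}$, and by Kusmin--Landau, $N/|\gamma_r-\gamma_{r'}|$, when $|\gamma_r-\gamma_{r'}|<N$. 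The Kusmin--Landau terms sum to $O(N\Lc)$ by spacing. The result is
\[
  R\,N^{2\sigma}\ll T^{O(\delta)}N\bigl(N+R^2T^{\kappa}N^{\lambda-\kappa}\bigr).
\]
Hence either $R\ll T^{O(\delta)}N^{2-2\sigma}\le T^{O(\delta)}Y^{2-2\sigma}$, or the second term dominates, which loses.

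Third, we remove the $R^2$ by the standard splitting device. The heights are divided into intervals of length $T_0$ and the inequality is applied in each interval, where the off-diagonal factor becomes $R_0^2T_0^{\kappa}N^{\lambda-\kappa}$. We choose $T_0$ so that this is $\asymp N$, that is $R_0T_0^{\kappa/2}\asymp N^{(1+\kappa-\lambda)/2}$ in the relevant regime. Summing over the $T/T_0$ intervals then gives
\[
  R\ll T^{O(\delta)}\bigl(N^{2-2\sigma}+T\,N^{-((2+2\kappa)\sigma-1-\kappa-\lambda)/(2\kappa)}\bigr),
\]
with $T_0=N^{((2+2\kappa)\sigma-1-\kappa-\lambda)/(2\kappa)}$ up to $T^{O(\delta)}$.

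Condition \eqref{eq:pairrange} makes this exponent of $N$ nonnegative. The second term therefore decreases in $N$, while the first increases. This is the point where we must be careful, because the decreasing term is largest at the smallest $N$, not at $N\asymp Y$. To handle it we will instead detect with a polynomial of length between $Y$ and $Y\Lc^2$. The standard remedy is to take $X$ replaced by $Y^{1-\delta}$, or to use Lemma \ref{lem:detect} with the mollifier so that short blocks are raised to length $\ge Y$ by taking powers $k$ with $N^k\in[Y,Y^2]$. Taking powers preserves the shape of the bound, with $G$ growing only by $T^{O(\delta)}$.

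We expect the main obstacle to be this last step, which is making $N\ge Y^{1-O(\delta)}$ while keeping $N\le Y\Lc^2$ in the first term. We would try the powering argument first. We also need to check that $T_0\le T$ in the relevant range, which again follows from \eqref{eq:pairrange} and $N\le T^2$.
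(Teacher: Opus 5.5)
There is a genuine gap. It lies in the step you flagged, but an earlier error in the Hal\'asz step creates it.

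In Bombieri's form the off-diagonal term carries $R_0$, not $R_0^2$. You need $\max_{r'}\sum_r|(\phi_r,\phi_{r'})|$. For ordinates in a window of length $T_0$ this is $\ll N+R_0T_0^{\kappa}N^{\lambda-\kappa}$, the Kusmin--Landau part contributing $\ll N\Lc^{-3}$ by the $3\Lc^4$-spacing. So the window is chosen independently of $R_0$, as $T_0=N^{(2\sigma-1-\lambda+\kappa)/\kappa}T^{-O(\delta)}$. Then the off-diagonal term is absorbed, and each window contains $\ll T^{O(\delta)}N^{2-2\sigma}$ zeros. Summing over the $1+T/T_0$ windows gives
\[
  R\ll T^{O(\delta)}\Bigl(N^{2-2\sigma}+TN^{-\frac{(2+2\kappa)\sigma-1-\kappa-\lambda}{\kappa}}\Bigr),
\]
with $\kappa$, not $2\kappa$, in the denominator. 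Your displayed $T_0$ and your per-block bound are consistent neither with each other nor with the inequality before them.

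Because your per-block bound is weaker, you are led to demand a block with $Y^{1-O(\delta)}\le N\le Y\Lc^2$, and that cannot be arranged. Powers of a block jump by the factor $N_0$: for $N_0=Y^{3/5}$ neither $N_0$ nor $N_0^2=Y^{6/5}$ is admissible. Enlarging $X$ to $Y^{1-\delta}$ does not help either, because it ruins alternative (II) of Lemma \ref{lem:detect}, whose threshold uses $|M_X(\frac12+it)|\ll X^{1/2+o(1)}$ with $X=T^\delta$.

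The paper powers only past $Y^{1/2}$. If $N_0\le Y^{1/2}$, take the least $k$ with $N_0^k>Y^{1/2}$. Then $Y^{1/2}<N_0^k\le Y^{1/2}N_0\le Y$, and $k\le 2/\delta$ since $N_0\ge X=T^\delta$. With the correct per-block bound, the first term is at most $(YT^\delta)^{2-2\sigma}$. The second term decreases in $N$, so it is at most its value at $N=Y^{1/2}$, which is exactly $TY^{-((2+2\kappa)\sigma-1-\kappa-\lambda)/(2\kappa)}$. The $2\kappa$ in the lemma records the length $Y^{1/2}$, not a different per-block estimate.

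Two smaller points need fixing as well.
\begin{enumerate}
\item Apply the powering to the original block of $c_nn^{-\rho}e^{-n/Y}$. Its lower bound $(6\Lc)^{-1}$ survives up to $2/\delta$ powers as $T^{-o(1)}$, whereas the $T^{-\delta}$ you lose in your first step would become $T^{-2}$.
\item Do not freeze $\beta$ at some $\beta_0$. Put $n^{\sigma-\beta_r}$ into the Hal\'asz--Montgomery vectors. The inner products then carry the decreasing weight $n^{2\sigma-\beta_r-\beta_s}\le1$, which partial summation removes. What remains are the partial sums $\sum_{M<n\le u}n^{i(\gamma_r-\gamma_s)}$, to which \eqref{eq:pair} and Kusmin--Landau apply directly, so no smooth weight is needed.
\end{enumerate}
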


\begin{proof}
For each zero counted by $R_{\mathrm I}$, one of $O(\Lc)$ dyadic blocks of the sum in (I) has absolute value at least $(6\Lc)^{-1}$.  Fixing the block $N_0<n\le2N_0$ loses a factor $\Lc$.

If $N_0>Y^{1/2}$ put $k=1$.  Otherwise let $k\ge2$ be the least integer with $N_0^k>Y^{1/2}$, so that $N_0^k\le Y^{1/2}N_0\le Y$.  Since $N_0\ge X=T^\delta$ and $Y\le T^2$, we have $k\le2/\delta$.  Raising the block to the $k$-th power gives
\[
  \Bigl|\sum_{N_0^k<n\le(2N_0)^k}a_nn^{-\rho}\Bigr|\ge(6\Lc)^{-k},\qquad a_n=\sum_{\substack{n_1\cdots n_k=n\\ N_0<n_i\le2N_0}}\ \prod_{i=1}^kc_{n_i}e^{-n_i/Y},
\]
with $|a_n|\ll n^{\delta}$.  Splitting into $O(k)$ dyadic blocks and fixing one of them loses a further factor $O(k)$.

We obtain $M$ with
\begin{equation}\label{eq:Mrange}
  Y^{1/2}\le M\le YT^{\delta},
\end{equation}
coefficients $a_n\ll n^\delta$ which do not depend on the zero, and a set of $R\gg R_{\mathrm I}T^{-O(\delta)}$ zeros $\rho_r=\beta_r+i\gamma_r$, $3\Lc^4$-spaced, with
\begin{equation}\label{eq:Pr}
  \Bigl|\sum_{M<n\le2M}a_nn^{-\rho_r}\Bigr|\ge T^{-\delta}.
\end{equation}

Now apply the Hal\'asz--Montgomery inequality \cite[Lemma 1.7]{Montgomery},
\[
  \sum_r|(\xi,\phi_r)|\le\|\xi\|\Bigl(\sum_{r,s}|(\phi_r,\phi_s)|\Bigr)^{1/2},
\]
with the vectors
\[
  \xi_n=a_nn^{-\sigma},\qquad\phi_{r,n}=n^{\sigma-\beta_r+i\gamma_r}\qquad(M<n\le2M).
\]
Then $(\xi,\phi_r)$ is the sum in \eqref{eq:Pr}, and $\|\xi\|^2\ll M^{1-2\sigma+2\delta}$.  Hence
\begin{equation}\label{eq:HM}
  R^2T^{-2\delta}\ll M^{1-2\sigma+2\delta}\sum_{r,s}\Bigl|\sum_{M<n\le2M}n^{2\sigma-\beta_r-\beta_s}n^{i(\gamma_r-\gamma_s)}\Bigr| .
\end{equation}
The factor $n^{2\sigma-\beta_r-\beta_s}$ is decreasing in $n$ and at most $1$.  So by partial summation the inner sum is at most $\max_{M<u\le2M}|\sum_{M<n\le u}n^{i(\gamma_r-\gamma_s)}|$.  This is at most $M$ when $r=s$, and by \eqref{eq:pair} and the Kusmin--Landau inequality it is
\[
  \ll\frac M{|\gamma_r-\gamma_s|}+|\gamma_r-\gamma_s|^\kappa M^{\lambda-\kappa}\qquad(r\ne s).
\]

Suppose first that all the $\gamma_r$ lie in an interval of length $T_0$.  Since the $\gamma_r$ are $3\Lc^4$-spaced, $\sum_{s\ne r}|\gamma_r-\gamma_s|^{-1}\ll\Lc^{-3}$, and \eqref{eq:HM} gives
\[
  R^2T^{-2\delta}\ll M^{1-2\sigma+2\delta}\bigl(RM+R^2T_0^\kappa M^{\lambda-\kappa}\bigr).
\]
Take
\[
  T_0=M^{(2\sigma-1-\lambda+\kappa)/\kappa}T^{-7\delta/\kappa}.
\]
Then the second term on the right is at most a small multiple of the left-hand side, because $M^{2\delta}\le T^{5\delta}$, and so $R\ll T^{O(\delta)}M^{2-2\sigma}$.

In general we divide $[T,2T]$ into $\ll1+T/T_0$ intervals of length at most $T_0$.  This gives
\[
  R\ll T^{O(\delta)}M^{2-2\sigma}\Bigl(1+\frac T{T_0}\Bigr)\ll T^{O(\delta)}\Bigl(M^{2-2\sigma}+TM^{-\frac{(2+2\kappa)\sigma-1-\kappa-\lambda}{\kappa}}\Bigr),
\]
since
\[
  2-2\sigma-\frac{2\sigma-1-\lambda+\kappa}{\kappa}=-\frac{(2+2\kappa)\sigma-1-\kappa-\lambda}{\kappa}.
\]
By \eqref{eq:pairrange} the exponent of $M$ in the second term is not positive, and \eqref{eq:Mrange} gives the lemma.
\end{proof}

\subsection{The exponent}

\begin{proposition}\label{prop:density}
Let $(\kappa,\lambda)$ be an exponent pair with $\kappa>0$, let $\frac34<\sigma<1$ satisfy \eqref{eq:pairrange}, and suppose that
\[
  D_{\kappa,\lambda}(\sigma):=(2-2\kappa)\sigma+3\kappa-\lambda-1>0 .
\]
Put
\begin{equation}\label{eq:Apair}
  A=\max\Bigl(\frac{4\kappa}{D_{\kappa,\lambda}(\sigma)},\ \frac4{4\sigma-1}\Bigr),
\end{equation}
and suppose that $A\le4$.  Then $N_f(\sigma,T)\ll_{f,\epsilon}T^{A(1-\sigma)+\epsilon}$ for every $\epsilon>0$.
\end{proposition}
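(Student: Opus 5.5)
We take $Y=T^{A/2+\eta}$ in Lemma \ref{lem:detect}, with $\eta>0$ small in terms of $\epsilon$ and $\sigma$. We then bound the two kinds of zeros by \eqref{eq:RII} and Lemma \ref{lem:classI}, and check that each resulting term is at most $T^{A(1-\sigma)+O(\eta)+O(\delta)}$. The exponent $A$ in \eqref{eq:Apair} should turn out to be exactly the least value for which all three terms fit. Since the zeros of $L(s,\bar f)$ are the conjugates of those of $L(s,f)$, and we sum over dyadic $T$, it suffices to bound $N^+_f(\sigma,T)$ for $f$ and for $\bar f$. By \eqref{eq:NR}, this reduces to bounding $R_{\mathrm I}+R_{\mathrm{II}}$.

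First we check admissibility. Lemma \ref{lem:detect} needs $T^\delta\le Y\le T^2$, and the hypothesis $A\le4$ gives $Y\le T^{2}$ once $\eta$ is taken as a small negative perturbation when $A=4$ (or one simply caps $Y$ at $T^2$). For \eqref{eq:RII} we need
\[
  Y^{\sigma-\frac12}T^{-\delta}\ge T^{1/4+\delta}.
\]
Since $A\ge4/(4\sigma-1)$, we have $Y^{\sigma-\frac12}\ge T^{(2\sigma-1)\cdot2/(4\sigma-1)}$. The exponent $2(2\sigma-1)/(4\sigma-1)$ exceeds $\frac14$ by a fixed positive amount exactly because $\sigma>\frac34$: the comparison is $8\sigma-4>4\sigma-1$. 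So the condition holds once $\delta$ is small in terms of $\sigma$. Then
\[
  R_{\mathrm{II}}\ll T^{2+O(\delta)}Y^{3-6\sigma}=T^{2+\frac A2(3-6\sigma)+O(\eta+\delta)},
\]
and $2+\frac A2(3-6\sigma)\le A(1-\sigma)$ is equivalent to $A(4\sigma-1)\ge4$.

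For zeros of the first kind, Lemma \ref{lem:classI} applies because $\sigma$ satisfies \eqref{eq:pairrange}. Its first term is
\[
  Y^{2-2\sigma}=T^{A(1-\sigma)+O(\eta)}.
\]
Write $B=(2+2\kappa)\sigma-1-\kappa-\lambda$. The second term is $T^{1-AB/(4\kappa)+O(\eta)}$, and we need
\[
  1\le A\bigl(B+4\kappa(1-\sigma)\bigr)/4\kappa.
\]
A direct expansion gives
\[
  B+4\kappa-4\kappa\sigma=(2-2\kappa)\sigma+3\kappa-\lambda-1=D_{\kappa,\lambda}(\sigma),
\]
so the condition is $A\ge4\kappa/D_{\kappa,\lambda}(\sigma)$, which holds by \eqref{eq:Apair}; here $D>0$ is used. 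Collecting the three bounds and inserting them into \eqref{eq:NR} gives $N^+_f(\sigma,T)\ll T^{A(1-\sigma)+O(\eta+\delta)}\Lc^5$. We then choose $\eta,\delta$ small in terms of $\epsilon$ and sum dyadically.

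The main point needing care is the threshold $W\ge T^{1/4+\delta}$ required to apply \eqref{eq:target} in \eqref{eq:RII}. This is where $\sigma>\frac34$ and the second entry $4/(4\sigma-1)$ of \eqref{eq:Apair} enter, and the $O(\delta)$ losses must be kept below the fixed margin. The other potential issue is the endpoint $A=4$, where $Y$ is pinned at $T^2$. There we take $Y=T^2$ exactly and absorb the resulting $T^{O(\eta)}$ discrepancy into the $\epsilon$.
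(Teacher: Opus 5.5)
Your proposal is correct and follows the paper's proof essentially verbatim. The paper takes $Y=T^{A/2}$ exactly, so your $\eta$ perturbation and the special handling of $A=4$ are unnecessary, since $A\le4$ already gives $Y\le T^2$. It then checks the same three comparisons with $Y^{2-2\sigma}=T^{A(1-\sigma)}$: the identity $B+4\kappa(1-\sigma)=D_{\kappa,\lambda}(\sigma)$, the condition $A(4\sigma-1)\ge4$, and the threshold for \eqref{eq:RII} via $\sigma>\frac34$.

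One small slip: the displayed lower bound should read $Y^{\sigma-\frac12}\ge T^{(2\sigma-1)/(4\sigma-1)}$, without the extra factor $2$. That corrected exponent is the one your comparison $8\sigma-4>4\sigma-1$ actually uses.
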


\begin{proof}
Put $Y=T^{A/2}$, so that $T^\delta\le Y\le T^2$.  We compare the three terms given by Lemma \ref{lem:classI} and \eqref{eq:RII} with $Y^{2-2\sigma}=T^{A(1-\sigma)}$.

For the second term of Lemma \ref{lem:classI},
\[
  TY^{-\frac{(2+2\kappa)\sigma-1-\kappa-\lambda}{2\kappa}}\le Y^{2-2\sigma}
  \quad\text{if and only if}\quad
  \frac A2\cdot\frac{D_{\kappa,\lambda}(\sigma)}{2\kappa}\ge1,
\]
and this holds by \eqref{eq:Apair}.  For \eqref{eq:RII},
\[
  T^2Y^{3-6\sigma}\le Y^{2-2\sigma}\quad\text{if and only if}\quad Y^{4\sigma-1}\ge T^2,
\]
that is $A\ge4/(4\sigma-1)$, which again holds by \eqref{eq:Apair}.

The hypothesis of \eqref{eq:RII} is $\frac A2(\sigma-\half)\ge\frac14+2\delta$.  Since $\frac A2\ge\frac2{4\sigma-1}$, it holds as soon as $\frac{2\sigma-1}{4\sigma-1}>\frac14$, that is $\sigma>\frac34$, provided $\delta$ is small in terms of $\sigma$.

Combining these bounds with \eqref{eq:NR} gives $N^+_f(\sigma,T)\ll T^{A(1-\sigma)+O(\delta)}$.  The same holds for $\bar f$, and summing over dyadic ranges of $T$, with $\delta$ small in terms of $\epsilon$, gives the proposition.
\end{proof}

\begin{proof}[Proof of Theorem \ref{thm:density}]
Bourgain \cite{Bourgain} showed that $(\frac{13}{84}+\eta,\frac{55}{84}+\eta)$ is an exponent pair for every $\eta>0$.  Van der Corput's $A$-process \cite{GrahamKolesnik}, which sends $(\kappa,\lambda)$ to $(\frac\kappa{2\kappa+2},\frac{\kappa+\lambda+1}{2\kappa+2})$, turns it into the pair $(\frac{13}{194}+\eta',\frac{152}{194}+\eta')$ with $\eta'\to0$ as $\eta\to0$.  The exponent in Proposition \ref{prop:density} is continuous in $(\kappa,\lambda)$, and on the ranges below the inequalities \eqref{eq:pairrange} and $D_{\kappa,\lambda}(\sigma)>0$ hold with room to spare.  So we may use these pairs with $\eta=\eta'=0$, at the cost of $\epsilon$.

For $(\kappa,\lambda)=(\frac{13}{84},\frac{55}{84})$ we have
\[
  D_{\kappa,\lambda}(\sigma)=\frac{142\sigma-100}{84},\qquad\frac{4\kappa}{D_{\kappa,\lambda}(\sigma)}=\frac{26}{71\sigma-50},
\]
and \eqref{eq:pairrange} is $\sigma\ge\frac{76}{97}$.  For $(\kappa,\lambda)=(\frac{13}{194},\frac{152}{194})$ we have
\[
  D_{\kappa,\lambda}(\sigma)=\frac{362\sigma-307}{194},\qquad\frac{4\kappa}{D_{\kappa,\lambda}(\sigma)}=\frac{52}{362\sigma-307},
\]
and \eqref{eq:pairrange} is $\sigma\ge\frac{359}{414}$.

For $\sigma\ge\frac{63}{71}$ the three functions in \eqref{eq:Asigma} satisfy
\[
  \frac{26}{71\sigma-50}\le\frac{52}{362\sigma-307}\iff\sigma\le\frac{207}{220},\qquad
  \frac{52}{362\sigma-307}\ge\frac4{4\sigma-1}\iff\sigma\le\frac{147}{155},
\]
and $\frac{26}{71\sigma-50}\ge\frac4{4\sigma-1}$ for $\sigma\le\frac{29}{30}$.  So on each of the three ranges in \eqref{eq:Asigma}, the better of the two pairs in \eqref{eq:Apair} gives exactly the value of $A(\sigma)$ stated there, and $A(\sigma)\le2$.  Finally $A(\frac{63}{71})=2$ and $A(\sigma)$ is decreasing, so $A(\sigma)<2$ for $\sigma>\frac{63}{71}$.
\end{proof}

\begin{remark}\label{rem:density}
The sixth moment enters only through \eqref{eq:RII}.  Suppose that \eqref{eq:R1} is used there instead.  Then the count of zeros of the second kind is $T^{1+O(\delta)}Y^{1-2\sigma}$, and with $Y=T^{A/2}$ the inequality $TY^{1-2\sigma}\le Y^{2-2\sigma}$ holds only if $A\ge2$.  So the mean square cannot give an exponent below $2$ by this method.

Above that threshold the mean square is enough.  The argument of Chen, Debruyne and Vindas \cite{CDV}, which proves the density hypothesis for $\sigma\ge\frac{1407}{1601}$ at level one, uses $f$ only through its mean square on the critical line and Deligne's bound.  With Theorem \ref{thm:second} in place of the mean square used there, it gives the same range at every level.
\end{remark}

\section{Non-normal cubic fields}
\label{sec:cubic}

Let $K$ be as in Theorem \ref{thm:cubic}, and write $f=f_K$ and $\chi=\chi_K$.  Let $\widetilde K$ be the normal closure of $K$, and let $\rho$ be the two-dimensional irreducible representation of $\mathrm{Gal}(\widetilde K/\Q)\cong S_3$.  For a prime $p\nmid d_K$ let $\mathrm{Fr}_p$ be its Frobenius class.

Then $a_K(p)$ is the number of fixed points of $\mathrm{Fr}_p$ acting on the three embeddings of $K$, and
\[
  \lambda_f(p)=\operatorname{tr}\rho(\mathrm{Fr}_p),\qquad\chi(p)=\operatorname{sgn}(\mathrm{Fr}_p).
\]
The permutation representation of $S_3$ on three letters is the sum of the trivial representation and $\rho$, so
\begin{equation}\label{eq:aKp}
  a_K(p)=1+\lambda_f(p)\qquad(p\nmid d_K).
\end{equation}

\begin{lemma}\label{lem:cubicfactor}
For $\ell\ge1$ put
\[
  A_\ell=\tfrac12(3^{\ell-1}+1),\qquad B_\ell=\tfrac12(3^{\ell-1}-1),\qquad C_\ell=3^{\ell-1}.
\]
Then
\begin{equation}\label{eq:Dell}
  D_\ell(s):=\sum_{n\ge1}\frac{a_K(n)^\ell}{n^s}=\zeta(s)^{A_\ell}L(s,\chi)^{B_\ell}L(s,f)^{C_\ell}U_\ell(s),
\end{equation}
where $U_\ell(s)$ is an Euler product which converges absolutely for $\Real s>\half$, and $U_\ell(1)\ne0$.
\end{lemma}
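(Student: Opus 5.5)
The plan is to compare Euler factors prime by prime. Both sides of \eqref{eq:Dell} are Euler products for $\Real s>1$: $a_K(n)$ is multiplicative, and so $a_K(n)^\ell$ is multiplicative too. We then define $U_\ell(s)$ as the quotient $D_\ell(s)\zeta(s)^{-A_\ell}L(s,\chi)^{-B_\ell}L(s,f)^{-C_\ell}$. It remains to show that for $p\nmid d_K$ the local factor of $U_\ell$ is $1+O(p^{-2\sigma})$, uniformly in $p$ (the bound is polynomial in $\ell$ but $\ell$ is fixed). The finitely many ramified primes contribute factors that are holomorphic and nonvanishing for $\Real s>0$.

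The key step is the coefficient of $p^{-s}$. By \eqref{eq:aKp}, $a_K(p)^\ell=\operatorname{tr}\pi(\mathrm{Fr}_p)^\ell$, where $\pi=1\oplus\rho$ is the permutation representation. We decompose $\pi^{\otimes\ell}$ into irreducibles of $S_3$, namely the trivial character, the sign $\chi$, and $\rho$. The multiplicities follow from the character values. The permutation character is $3$ on the identity, $1$ on transpositions and $0$ on $3$-cycles, so $\pi^{\otimes\ell}$ has character $3^\ell,1,0$. Taking inner products with $(1,1,1)$, $(1,-1,1)$ and $(2,0,-1)$, using class sizes $1,3,2$, gives:
\begin{itemize}
\item trivial multiplicity $(3^\ell+3)/6=A_\ell$;
\item sign multiplicity $(3^\ell-3)/6=B_\ell$;
\item $\rho$ multiplicity $2\cdot3^\ell/6=C_\ell$.
\end{itemize}
Hence $a_K(p)^\ell=A_\ell+B_\ell\chi(p)+C_\ell\lambda_f(p)$. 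This is exactly the coefficient of $p^{-s}$ in $\zeta^{A_\ell}L(\cdot,\chi)^{B_\ell}L(\cdot,f)^{C_\ell}$, since $\lambda_f(p)=\operatorname{tr}\rho(\mathrm{Fr}_p)$ and $\chi(p)=\operatorname{sgn}(\mathrm{Fr}_p)$. So the first-order terms cancel in the local factor of $U_\ell$.

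For the higher terms, every coefficient involved is bounded by a constant $c_\ell^{k}$ at $p^{-ks}$, because $a_K(p^k)\le d_3(p^k)\ll k^2$ and the local factors of the $L$-functions have bounded Satake parameters. Therefore the local factor of $U_\ell$ is $1+\sum_{k\ge2}b_\ell(p^k)p^{-ks}$ with $|b_\ell(p^k)|\le C^k$. This product converges absolutely for $\Real s>\half$.

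For $U_\ell(1)\ne0$, each local factor is a finite product of factors $(1-\alpha p^{-s})^{\pm1}$ with $|\alpha|\le1$, times the Euler factor of $D_\ell$. The latter has positive coefficients starting at $1$, so it is positive and nonzero at $s=1$. The other factors are nonzero at $s=1$ when $|\alpha|<p$. An absolutely convergent product of nonzero factors is nonzero.

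The main obstacle is the ramified primes $p\mid d_K$. There $a_K(p^k)$ depends on the splitting type and some $\lambda_f(p)$ or $\chi(p)$ vanish, so we only need that each such finite local factor is nonzero at $s=1$. We check this directly: for example $\sum_k a_K(p^k)^\ell p^{-k}\ge1$, and the $L$-factors are of the form $(1-\alpha p^{-1})^{-1}$ with $|\alpha|\le1$. The exponents in the display then hold globally.
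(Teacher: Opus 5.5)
Your proposal is correct and follows the paper's proof. You match the $p^{-s}$ coefficients through $a_K(p)^\ell=A_\ell+B_\ell\chi(p)+C_\ell\lambda_f(p)$, deriving the multiplicities from the character of $\pi^{\otimes\ell}$ where the paper checks the identity on the three conjugacy classes, then bound the rest of each unramified factor by $O(p^{-2\sigma})$ and get $U_\ell(1)\ne0$ from positivity of the local factors of $D_\ell$ at $s=1$.

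Two side claims are wrong or incomplete, though the lemma does not need them. First, a ramified local factor need not be nonvanishing on $\Real s>0$. At a partially ramified $p$ the local factor of $D_3$ is $(1+4p^{-s}+p^{-2s})(1-p^{-s})^{-4}$, which vanishes where $p^{-s}=\sqrt3-2$, and so does that of $U_3$. Second, a bound $|b_\ell(p^k)|\le C^k$ alone does not handle the finitely many small primes. The bound polynomial in $k$ that follows from your own estimate $a_K(p^k)\ll k^2$ does.
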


\begin{proof}
For $p\nmid d_K$ the values of the triple $(a_K(p),\chi(p),\lambda_f(p))$ on the identity, the transpositions and the three-cycles are
\[
  (3,1,2),\qquad(1,-1,0),\qquad(0,1,-1).
\]
In each case $a_K(p)^\ell=A_\ell+B_\ell\chi(p)+C_\ell\lambda_f(p)$, because
\[
  A_\ell+B_\ell+2C_\ell=3^\ell,\qquad A_\ell-B_\ell=1,\qquad A_\ell+B_\ell-C_\ell=0 .
\]
So for $p\nmid d_K$ the coefficients of $p^{-s}$ in the Euler factors of the two sides of \eqref{eq:Dell} agree.

Since $a_K(n)\le d(n)^2$ and $|\lambda_f(n)|\le d(n)$, the Euler factor of $U_\ell(s)$ at such a prime is $1+O_\ell(p^{-2\sigma})$ for $\sigma=\Real s>\half$.  The finitely many factors at primes $p\mid d_K$ are holomorphic in $\Real s>0$.  Hence $U_\ell(s)$ converges absolutely for $\Real s>\half$.

At $s=1$, each Euler factor of $U_\ell(s)$ is a series in $p^{-1}$ with non-negative coefficients and constant term $1$, multiplied by the inverses of the local factors of $\zeta(s)$, $L(s,\chi)$ and $L(s,f)$.  None of these vanishes, so $U_\ell(1)\ne0$.
\end{proof}

\begin{proof}[Proof of Theorem \ref{thm:cubic}]
By \eqref{eq:zetaK} and H\"older's inequality,
\[
  \int_T^{2T}\bigl|\zeta_K(\half+it)\bigr|^4\dd t\le\Bigl(\int_T^{2T}\bigl|\zeta(\half+it)\bigr|^{12}\dd t\Bigr)^{1/3}\Bigl(\int_T^{2T}\bigl|L(\half+it,f)\bigr|^6\dd t\Bigr)^{2/3}.
\]
Heath-Brown's bound $\int_T^{2T}|\zeta(\half+it)|^{12}\dd t\ll T^2(\log T)^{17}$ \cite{HeathBrown12} and Theorem \ref{thm:main} show that this is $\ll T^{2+\epsilon}$.  Summing over dyadic ranges gives \eqref{eq:zetaK4}.

We turn to the power sums.  Since $\zeta_K(s)=\zeta(s)L(s,f)$ has a simple pole at $s=1$, $L(1,f)\ne0$.  Also $L(1,\chi)\ne0$.  So by Lemma \ref{lem:cubicfactor} the series $D_\ell(s)$ is holomorphic in $\Real s>\half$, apart from a pole of order $A_\ell$ at $s=1$.

For $\half\le\sigma\le1$ and $|t|\ge1$ we use the pointwise bounds
\begin{equation}\label{eq:pointwise}
  \zeta(\sigma+it)\ll|t|^{\frac{13}{42}(1-\sigma)+\epsilon},\qquad
  L(\sigma+it,\chi)\ll|t|^{\frac13(1-\sigma)+\epsilon},\qquad
  L(\sigma+it,f)\ll|t|^{\frac23(1-\sigma)+\epsilon}.
\end{equation}
These follow by the Phragm\'en--Lindel\"of principle from Bourgain's bound $\zeta(\half+it)\ll|t|^{13/84+\epsilon}$ \cite{Bourgain}, from the Weyl bound $L(\half+it,\chi)\ll(q(1+|t|))^{1/6+\epsilon}$ of Petrow and Young \cite[Theorem 1.1]{PetrowYoung}, and from \cite[Theorem 1.1]{BMN}.

Let $2\le T\le x$ and $c=1+1/\log x$.  Since $a_K(n)\le d(n)^2$, Perron's formula gives
\[
  \sum_{n\le x}a_K(n)^\ell=\frac1{2\pi i}\int_{c-iT}^{c+iT}D_\ell(s)\frac{x^s}s\dd s+O\Bigl(\frac{x^{1+\epsilon}}T\Bigr).
\]
Let $\half<\sigma_0<1$, and move the contour to $\Real s=\sigma_0$.  The residue at $s=1$ is $xP(\log x)$, with $P$ a polynomial of degree $A_\ell-1$.  This degree is $1$ for $\ell=2$ and $4$ for $\ell=3$.

On $\Real s\ge\sigma_0$ we have $U_\ell(s)\ll_{\sigma_0}1$.  By \eqref{eq:pointwise}, $D_\ell(\sigma+iT)\ll T^{g_\ell(\sigma)+\epsilon}$, where
\[
  g_\ell(\sigma)=\Bigl(\frac{13}{42}A_\ell+\frac13B_\ell+\frac23C_\ell\Bigr)(1-\sigma)
\]
is linear in $\sigma$.  So the horizontal segments contribute $\ll x^\epsilon(x^{\sigma_0}T^{g_\ell(\sigma_0)-1}+xT^{-1})$.  The coefficients of $D_\ell(s)$ are real, so $|D_\ell(\sigma_0-it)|=|D_\ell(\sigma_0+it)|$.  Splitting the vertical segment into dyadic ranges gives
\begin{equation}\label{eq:perroncubic}
  \sum_{n\le x}a_K(n)^\ell-xP(\log x)\ll x^{\epsilon}\Bigl(\frac xT+x^{\sigma_0}T^{g_\ell(\sigma_0)-1}+x^{\sigma_0}\max_{1\le U\le T}\frac1U\int_U^{2U}\bigl|D_\ell(\sigma_0+it)\bigr|\dd t\Bigr).
\end{equation}

For $\ell=3$ we have $(A_3,B_3,C_3)=(5,4,9)$.  Take $\sigma_0=\half+\epsilon$, so that $g_3(\sigma_0)\le\frac{373}{84}$.  We bound $|\zeta(s)|^5|L(s,\chi)|^4|L(s,f)|^3$ pointwise by \eqref{eq:pointwise}, and the remaining $|L(s,f)|^6$ by Lemma \ref{lem:sixthlines}.  This gives
\[
  \int_U^{2U}\bigl|D_3(\sigma_0+it)\bigr|\dd t\ll U^{\frac{65}{84}+\frac23+1+2+O(\epsilon)}=U^{\frac{373}{84}+O(\epsilon)}.
\]
The right-hand side of \eqref{eq:perroncubic} is therefore $\ll x^{O(\epsilon)}(x/T+x^{1/2}T^{289/84})$, and the choice $T=x^{42/373}$ gives \eqref{eq:S3}.

For $\ell=2$ we have $(A_2,B_2,C_2)=(2,1,3)$.  Take $\sigma_0=\frac58+\epsilon$, so that $g_2(\sigma_0)<\frac{62}{21}\cdot\frac38<\frac98$.  By \cite[Chapter 8]{Ivic} the eighth moment of $\zeta(s)$ satisfies
\begin{equation}\label{eq:zeta8}
  \int_U^{2U}\bigl|\zeta(\sigma+it)\bigr|^8\dd t\ll_{\sigma,\epsilon}U^{1+\epsilon}\qquad(\sigma>\tfrac58).
\end{equation}
We bound $|L(s,\chi)|$ pointwise, and use H\"older's inequality with \eqref{eq:zeta8} and Theorem \ref{thm:lines}.  This gives
\begin{align*}
  \int_U^{2U}\bigl|D_2(\sigma_0+it)\bigr|\dd t&\ll U^{\frac18+\epsilon}\Bigl(\int_U^{2U}\bigl|\zeta(\sigma_0+it)\bigr|^8\dd t\Bigr)^{\frac14}\Bigl(\int_U^{2U}\bigl|L(\sigma_0+it,f)\bigr|^4\dd t\Bigr)^{\frac34}\\
  &\ll U^{\frac98+2\epsilon}.
\end{align*}
The right-hand side of \eqref{eq:perroncubic} is then $\ll x^{O(\epsilon)}(x/T+x^{5/8}T^{1/8})$, and the choice $T=x^{1/3}$ gives \eqref{eq:S2}.
\end{proof}

\section{Shifted convolution sums}
\label{sec:shifted}

Let $j\ge2$.  Since $\lambda_{\bar f}(n)=\overline{\lambda_f(n)}$, the Dirichlet series with coefficients $\overline{\lambda_{j,f}(n)}$ is $L(s,\bar f)^j$.  Moreover $|L(\sigma+it,\bar f)|=|L(\sigma-it,f)|$, and $|\lambda_{j,f}(n)|\le d_{2j}(n)\ll n^\epsilon$.  Put
\begin{equation}\label{eq:By}
  B(y)=\sum_{m\le y}\overline{\lambda_{j,f}(m)} .
\end{equation}

\begin{lemma}\label{lem:plancherel}
Let $\half\le\sigma<1$ and $0\le b\le2$, and put $e=\frac{2j}3(1-\sigma)$.  Suppose that
\begin{equation}\label{eq:bhyp}
  \int_{-U}^U\bigl|L(\sigma+it,f)\bigr|^{2j}\dd t\ll U^{b+\epsilon}\qquad(U\ge1).
\end{equation}
Then for $x\ge2$ and every integer $1\le H\le x$
\[
  \int_x^{2x}\bigl|B(y+H)-B(y)\bigr|^2\dd y\ \ll\ x^{\epsilon}\bigl(x^{2\sigma-1+b}H^{2-b}+x^{2\sigma-3+4e}+x\bigr).
\]
\end{lemma}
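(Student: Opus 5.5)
We would represent $B(y+H)-B(y)$ by a truncated Perron formula on the line $\Real s=\sigma$ and then apply Plancherel's theorem in the variable $\log y$. Write $\Fc(s)=L(s,\bar f)^j$. Use Perron's formula with abscissa $c=1+1/\log x$ and truncation height $U_0$, a large power of $x$. This gives
\[
  B(y+H)-B(y)=\frac1{2\pi i}\int_{c-iU_0}^{c+iU_0}\Fc(s)\,\frac{(y+H)^s-y^s}{s}\dd s+E(y).
\]
Because the coefficients are $\ll n^\epsilon$, the error $E(y)$ is $O(x^{1+\epsilon}/U_0)$ plus the contribution of the few $n$ within $x/U_0$ of $y$ or $y+H$. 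Its mean square over $[x,2x]$ is then $\ll x^{1+\epsilon}$ once $U_0$ is large. We need the freedom to take $U_0$ large, since the next step will not lose in the height.

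Next, shift the contour to $\Real s=\sigma$. The integrand is holomorphic there because $L(s,f)$ is entire. The horizontal segments at height $\pm U_0$ are bounded pointwise using $|L(\sigma'+iU_0,f)|\ll U_0^{\frac23(1-\sigma')+\epsilon}$, which comes from \cite[Theorem 1.1]{BMN} and Phragm\'en--Lindel\"of. For $\sigma\le\sigma'\le c$ their contribution is $\ll x^{\epsilon}(x/U_0+x^{\sigma}U_0^{e-1})$. Choose $U_0=x^{2}$, or any choice balancing these pieces. Squaring and integrating over $y$ then produces a term of the shape $x^{2\sigma+1}U_0^{2e-2}$. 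With the appropriate truncation this is where the term $x^{2\sigma-3+4e}$ comes from, and the exact power should be fixed by that optimisation.

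For the vertical integral, factor $(y+H)^s-y^s=y^s\bigl((1+H/y)^s-1\bigr)$ and set $y=e^u$. The weight $k(s,y)=((1+H/y)^s-1)/s$ depends weakly on $y$: it is $\ll\min(H/x,1/|t|)$, and its $y$-derivative is similarly controlled. Freeze $y$ by splitting $[x,2x]$ into $O(1)$ ranges, or absorb the dependence by a Taylor or Mellin expansion in $H/y$. One then gets
\[
  \int_x^{2x}\Bigl|\int\Fc(\sigma+it)\,y^{\sigma+it}k\dd t\Bigr|^2\dd y\ll x^{2\sigma+1}\int_{-U_0}^{U_0}\bigl|\Fc(\sigma+it)\bigr|^2\min\Bigl(\frac Hx,\frac1{|t|}\Bigr)^2\dd t ,
\]
using Plancherel in $u=\log y$ with a smooth majorant of $[\log x,\log 2x]$. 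Since $|\Fc(\sigma+it)|=|L(\sigma-it,f)|^j$, hypothesis \eqref{eq:bhyp} together with a dyadic decomposition in $|t|$ bounds the right-hand side. The range $|t|\le x/H$ gives $(H/x)^2(x/H)^{b+\epsilon}$. The dyadic ranges $U>x/H$ give $\sum U^{b-2+\epsilon}\ll(x/H)^{b-2+\epsilon}$, using $b\le2$; at $b=2$ this sum is logarithmic. In total this yields $x^{2\sigma-1+b}H^{2-b}$, up to a factor $x^{\epsilon}$.

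We expect the main obstacle to be justifying Plancherel when the kernel $k(s,y)$ genuinely depends on $y$ through $H/y$. Our first attempt would be to write $(1+H/y)^s-1=s\int_0^{H/y}(1+v)^{s-1}\dd v$. This turns the difference into an average over $v$ of integrals with $y$-free kernels $(1+v)^{s-1}$. We would then apply Cauchy--Schwarz in $v$ and Plancherel for each fixed $v$, which gives exactly the weight $\min(H/x,1/|t|)^2$ after also using the trivial bound $|k|\ll 1/|t|$.
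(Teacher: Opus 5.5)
Your outline matches the paper's proof: Perron's formula truncated at height $x^2$, a shift to $\Real s=\sigma$ whose horizontal segments give $x^{2\sigma-3+4e}$, and Plancherel in $\log y$. For the $y$-dependent kernel you use the identity $((y+H)^s-y^s)/s=y^s\int_0^{H/y}(1+v)^{s-1}\dd v$ followed by Cauchy--Schwarz in $v$; the paper uses the unscaled form $\int_0^H(y+w)^{s-1}\dd w$.

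The gap is in how you obtain the factor $1/|t|^2$ for $|t|>x/H$. That factor is what stops the range $x/H<|t|\le U_0$ from dominating. Cauchy--Schwarz in $v$ discards the oscillation of $(1+v)^{it}$. Applied to the whole range $|t|\le U_0$, it gives the weight $(H/x)^2$ for every $t$, hence $x^{2\sigma-1}H^2U_0^{b+\epsilon}$, which is useless. The pointwise bound $|k(s,y)|\ll1/|t|$ also cannot be fed into Plancherel afterwards, because $k$ depends on $y$. For $|t|>x/H$ the factor $(1+H/y)^{it}$ oscillates in $y$ on the scale $x^2/(|t|H)<x$. So freezing $y$ on $O(1)$ ranges fails exactly there. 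Expanding in powers of $H/y$ fails too, since its terms have size about $(|t|H/x)^m/m!$.

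The remedy is the paper's. Split at $|t|=x/H$ before using Cauchy--Schwarz, and use your $v$-representation only for $|t|\le x/H$. For $|t|>x/H$, do not factor out $y^s$. Instead write that part of the vertical integral as
\[
  (y+H)^\sigma\Psi(y+H)-y^\sigma\Psi(y),\qquad\Psi(z)=\frac1{2\pi}\int_{x/H<|t|\le U_0}\frac{\Fc(\sigma+it)}{\sigma+it}\,z^{it}\dd t,
\]
and apply Plancherel to each of the two terms separately, using $y,y+H\in[x,3x]$. The kernel $1/(\sigma+it)$ no longer depends on $y$, and the cancellation between the two terms is not needed in this range. This gives $x^{2\sigma+1}\int_{|t|>x/H}|\Fc(\sigma+it)|^2|t|^{-2}\dd t$. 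Your displayed inequality with weight $\min(H/x,1/|t|)^2$ is then true, and the rest of your argument goes through as in the paper.
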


\begin{proof}
Let $T=x^2$ and $c=1+1/\log x$.  For $y\in[x,3x]$ not an integer, Perron's formula gives
\[
  B(y)=\frac1{2\pi i}\int_{c-iT}^{c+iT}L(s,\bar f)^jy^s\frac{\dd s}s+O\bigl(x^{1+\epsilon}T^{-1}+x^\epsilon\bigr).
\]
Move the contour to $\Real s=\sigma$.  The integrand is holomorphic in between, and by \cite[Theorem 1.1]{BMN} and the Phragm\'en--Lindel\"of principle the horizontal segments contribute $\ll x^{\sigma+\epsilon}T^{e-1}+x^{1+\epsilon}T^{-1}$.  Hence $B(y+H)-B(y)=G(y)+E(y)$ for $x\le y\le2x$, where
\begin{equation}\label{eq:Gy}
  G(y)=\frac1{2\pi}\int_{-T}^{T}L(\sigma+it,\bar f)^j\,w_y(t)\dd t,\qquad w_y(t)=\frac{(y+H)^{\sigma+it}-y^{\sigma+it}}{\sigma+it},
\end{equation}
and
\[
  \int_x^{2x}|E(y)|^2\dd y\ll x^\epsilon\bigl(x^{2\sigma+1}T^{2e-2}+x^3T^{-2}+x\bigr)\ll x^{\epsilon}\bigl(x^{2\sigma-3+4e}+x\bigr).
\]

We use Plancherel's theorem in the form
\begin{equation}\label{eq:plancherel}
  \int_0^\infty\Bigl|\frac1{2\pi}\int_{\R}a(t)z^{it}\dd t\Bigr|^2\frac{\dd z}z=\frac1{2\pi}\int_{\R}|a(t)|^2\dd t\qquad(a\in L^2(\R)),
\end{equation}
which follows from the substitution $z=e^u$.  Write $G=G_0+\sum_UG_U$, where $G_0$ is the part of \eqref{eq:Gy} with $|t|\le x/H$, and $G_U$ is the part with $U<|t|\le2U$, for $O(\Lc)$ dyadic values $x/H\le U\le T$.  Then $|G|^2\ll\Lc(|G_0|^2+\sum_U|G_U|^2)$.

For $|t|\le x/H$ we write $w_y(t)=\int_0^H(y+v)^{\sigma-1+it}\dd v$, so that
\[
  G_0(y)=\int_0^H(y+v)^{\sigma-1}\Phi(y+v)\dd v,\qquad\Phi(z)=\frac1{2\pi}\int_{|t|\le x/H}L(\sigma+it,\bar f)^jz^{it}\dd t .
\]
By Cauchy's inequality $|G_0(y)|^2\le H\int_0^H(y+v)^{2\sigma-2}|\Phi(y+v)|^2\dd v$.  We integrate over $y\in[x,2x]$, put $z=y+v$, and use \eqref{eq:plancherel} and \eqref{eq:bhyp}.  This gives
\begin{align*}
  \int_x^{2x}|G_0(y)|^2\dd y&\le H^2(3x)^{2\sigma-1}\int_0^\infty|\Phi(z)|^2\frac{\dd z}z\\
  &\ll x^{2\sigma-1}H^2\int_{|t|\le x/H}\bigl|L(\sigma+it,f)\bigr|^{2j}\dd t\ \ll\ x^{2\sigma-1+\epsilon}H^2\Bigl(\frac xH\Bigr)^b .
\end{align*}

For $U<|t|\le2U$ we write $G_U(y)=(y+H)^\sigma\Psi_U(y+H)-y^\sigma\Psi_U(y)$, where
\[
  \Psi_U(z)=\frac1{2\pi}\int_{U<|t|\le2U}\frac{L(\sigma+it,\bar f)^j}{\sigma+it}z^{it}\dd t .
\]
Since $y$ and $y+H$ lie in $[x,3x]$, \eqref{eq:plancherel} and \eqref{eq:bhyp} give
\begin{align*}
  \int_x^{2x}|G_U(y)|^2\dd y&\ll x^{2\sigma+1}\int_0^\infty|\Psi_U(z)|^2\frac{\dd z}z\\
  &\ll x^{2\sigma+1}U^{-2}\int_{-2U}^{2U}\bigl|L(\sigma+it,f)\bigr|^{2j}\dd t\ \ll\ x^{2\sigma+1}U^{b-2+\epsilon}.
\end{align*}
The sum of these over $U\ge x/H$ is $\ll x^{2\sigma+1+\epsilon}(x/H)^{b-2}\Lc=x^{2\sigma-1+b+\epsilon}H^{2-b}\Lc$.  Collecting the terms gives the lemma.
\end{proof}

\begin{proof}[Proof of Theorem \ref{thm:shifted}]
Replacing $H$ by its integer part, we may assume that $H$ is an integer.  Then
\[
  S_{j,f}(x,H)=\sum_{x<n\le2x}\lambda_{j,f}(n)\bigl(B(n+H)-B(n)\bigr),
\]
and $B(y+H)-B(y)=B(n+H)-B(n)$ for $n\le y<n+1$.  So by Cauchy's inequality
\begin{align}
  |S_{j,f}(x,H)|^2&\le\sum_{x<n\le2x}|\lambda_{j,f}(n)|^2\sum_{x<n\le2x}\bigl|B(n+H)-B(n)\bigr|^2\notag\\
  &\ll x^{1+\epsilon}\int_x^{4x}\bigl|B(y+H)-B(y)\bigr|^2\dd y,\label{eq:SCauchy}
\end{align}
and Lemma \ref{lem:plancherel} applies on $[x,2x]$ and on $[2x,4x]$.  It remains to choose $\sigma$ and $b$.

For $j=2$ and $\sigma=\half$, Corollary \ref{cor:fourth} gives \eqref{eq:bhyp} with $b=\frac32$, and $e=\frac23$.  Lemma \ref{lem:plancherel} and \eqref{eq:SCauchy} give
\[
  S_{2,f}(x,H)\ll x^{\frac12+\epsilon}\bigl(x^{3/2}H^{1/2}\bigr)^{1/2}=x^{\frac54+\epsilon}H^{\frac14}.
\]
For $j=2$ and $\sigma=\frac58$, Theorem \ref{thm:lines} gives $b=1$, and $e=\frac12$, so that
\[
  S_{2,f}(x,H)\ll x^{\frac12+\epsilon}\bigl(x^{5/4}H\bigr)^{1/2}=x^{\frac98+\epsilon}H^{\frac12}.
\]
Together these give \eqref{eq:S2f}.  For $j=3$ and $\sigma=\half$, Theorem \ref{thm:main} gives $b=2$, and $e=1$, so that $S_{3,f}(x,H)\ll x^{\frac32+\epsilon}$, which is \eqref{eq:S3f}.

For $j\ge4$ put $\sigma=1-\frac3{2j}$, which lies in $[\frac58,1)$, so that $e=1$ and $m(\sigma)=j$.  We bound $j$ of the factors of $|L(\sigma+it,f)|^{2j}$ pointwise by \cite[Theorem 1.1]{BMN}, and the remaining $|L(\sigma+it,f)|^j$ by \eqref{eq:curve}.  This gives \eqref{eq:bhyp} with
\[
  b=1+j\cdot\frac23\cdot \frac3{2j}=2,
\]
and so $S_{j,f}(x,H)\ll x^{\frac12+\epsilon}(x^{2\sigma+1})^{1/2}=x^{2-\frac3{2j}+\epsilon}$, which is \eqref{eq:Sjf4}.
\end{proof}

\section{The general divisor problem}
\label{sec:divisor}

Let $j\ge2$ and put $\sigma_0=\theta_j$, with $\theta_j$ as in Theorem \ref{thm:divisor}. Then $m(\sigma_0)=j$, with $m(\sigma)$ as in \eqref{eq:msigma}. For $2\le j\le4$ this holds because $\sigma_0\le\frac58$ and
\[
  m(\sigma_0)=\frac2{3-4\sigma_0}=\frac2{3-3+2/j}=j,
\]
and for $j\ge4$ it holds because $\sigma_0\ge\frac58$ and $m(\sigma_0)=3/(2-2\sigma_0)=j$. We also have
\begin{equation}\label{eq:horizontal}
  \frac{2j}3(1-\sigma_0)\le1,
\end{equation}
since the left side is $(j+2)/6$ when $2\le j\le4$, and equals $1$ when $j\ge4$.

\begin{proof}[Proof of Theorem \ref{thm:divisor}]
Let $T=x$ and $c=1+1/\log x$. Since $|\lambda_{j,f}(n)|\le d_{2j}(n)$, Perron's formula gives
\[
  \sum_{n\le x}\lambda_{j,f}(n)=\frac1{2\pi i}\int_{c-iT}^{c+iT}L(s,f)^j\frac{x^s}s\dd s+O(x^\epsilon).
\]
Move the contour to $\Real s=\sigma_0$. The integrand is holomorphic in between, since $L(s,f)$ is entire.

By \cite[Theorem 1.1]{BMN} and the Phragm\'en--Lindel\"of principle, $|L(\sigma+it,f)|\ll|t|^{\frac23(1-\sigma)}\log|t|$ for $\half\le\sigma\le1$ and $|t|\ge2$. This bound for the integrand is log-linear in $\sigma$, so it is enough to check the two ends of the horizontal segments. They contribute
\[
  \ll x^{\epsilon}\Bigl(x^{\sigma_0}T^{\frac{2j}3(1-\sigma_0)-1}+xT^{-1}\Bigr)\ll x^{\sigma_0+\epsilon},
\]
by \eqref{eq:horizontal} and $T=x$.

On the vertical segment we use, for $U\ge1$,
\[
  J(U)=\int_U^{2U}\Bigl(\bigl|L(\sigma_0+it,f)\bigr|^j+\bigl|L(\sigma_0+it,\bar f)\bigr|^j\Bigr)\dd t\ll U^{1+\epsilon},
\]
which is \eqref{eq:curve} for $f$ and for $\bar f$, since $j=m(\sigma_0)$. As $L(\sigma_0+it,\bar f)$ is the complex conjugate of $L(\sigma_0-it,f)$, this gives
\[
  x^{\sigma_0}\int_{-T}^T\bigl|L(\sigma_0+it,f)\bigr|^j\frac{\dd t}{|\sigma_0+it|}\ll x^{\sigma_0}\Bigl(1+\sum_U\frac{J(U)}U\Bigr)\ll x^{\sigma_0+\epsilon},
\]
where $U$ runs over powers of two up to $T$. This proves Theorem \ref{thm:divisor}.
\end{proof}

\appendix

\section{The transformation in the required parameter range}
\label{sec:evidence}

We use our notation throughout: $Q$ is the transformation scale, called $M_0$ in \cite{BMN}, and $K$ is their Farey parameter $R$. Thus
\begin{equation}\label{eq:RH}
  K=\sqrt{M/Q},\qquad H=\frac{M^2}{K^2t}=\frac{MQ}{t} .
\end{equation}
Booker, Milinovich and Ng work under the standing assumptions $K\ge1$, $M\ll\sqrt C$, $(t/\log t)^{2/3}\ll Q\ll t^{2/3}$ and $t\ge\max(k^{3/2}\log k,N^{3/2},t_0)$.

\begin{proposition}\label{prop:ext}
Propositions 3.1 and 3.2 of \cite{BMN} hold, with implied constants depending only on $k$, $N$, $s$ and $\epsilon$, throughout the range
\begin{equation}\label{eq:range}
  \sqrt t\ \ll\ Q\ \le\ t^{1-\epsilon},\qquad Q\le M\le t^{1+\epsilon/2},\qquad s>4/\epsilon,
\end{equation}
in place of their standing $Q\ll t^{2/3}$ and $M\ll\sqrt C$; and Proposition 3.1 holds there with the summand $n^{-it}$ replaced by $n^{-\frac12-it}V_t(n/X)$, the amplitude $h^\pm_j(\ell/r)$ in its conclusion being multiplied by $x^\pm_j(\ell/r)^{-1/2}V_t(x^\pm_j(\ell/r)/X)$ and its error terms by $M^{-1/2}$.
\end{proposition}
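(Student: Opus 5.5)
The plan is to go through the proof of \cite[Propositions 3.1 and 3.2]{BMN} step by step, with the constraint $Q\ll t^{2/3}$ removed. At each use of the standing assumptions, we record the inequality actually needed and check it under \eqref{eq:range}. The structure of their argument does not depend on $Q$ beyond a few size comparisons. For a single piece, the steps are: Poisson/Voronoi summation of $\lambda_f(n)n^{-it}\e(u_jn/v_j)\omega_j(n)$ at the rational point, the split $\alpha_j=-a_j/q_j+\beta_j$, the twisted Voronoi formula with the character sum over $\chi$ modulo $N$, then stationary phase for the resulting integral transform. The arithmetic part (Voronoi at every cusp, the constants $c(f,r,\chi;j)$) does not see $Q$ or $M$ at all. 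So only the analytic steps, \cite[Lemma 4.3]{BMN} and the stationary-phase and tail estimates in their Section~4, need rechecking.

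First, the amplitude. We replace $n^{-it}$ by $n^{-1/2-it}V_t(n/X)$ and carry the factor $z^{-1/2}V_t(z/X)$ along as part of the smooth weight. Lemma \ref{lem:afe} makes it holomorphic and $O(M^{-1/2})$ on the discs of radius $\asymp x_0$ around points $x_0\asymp M$ used in \cite[Lemma 4.3]{BMN}. Those discs lie in the sector $|\arg y|\le\pi/3$ after scaling by $X$. Cauchy's estimates therefore give derivative bounds of the same shape, multiplied by $M^{-1/2}$. In the stationary-phase formula the amplitude is evaluated at the stationary point $x^\pm_j(\ell/r)$; this yields the stated factor $x^\pm_j(\ell/r)^{-1/2}V_t(x^\pm_j/X)$. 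The error terms inherit the factor $M^{-1/2}$.

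Second, the ranges of $M$ and $Q$. In \cite[Lemma 4.3]{BMN}, on the support of $\omega_j$ the quantity $ty/x+2\pi\alpha_jy$ is bounded by $|y|\,HK/(v_jx)\cdot t/x$ via \eqref{eq:37}. Its boundedness on discs of radius $\asymp HK/v_j$ only needs $HK/v_j\ll M$. From \eqref{eq:RH}, $HK=M^{3/2}Q^{1/2}/t$, and $HK\ll M$ means $MQ\ll t^2$. This holds since $M\le t^{1+\epsilon/2}$ and $Q\le t^{1-\epsilon}$, which is where the two exponents pair up. The stationary point exists and is nondegenerate when $\ell y_j/t$ is small. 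By \eqref{eq:theta} this is again $MQ/(K^2t^2)\cdot$const, which is small. The dual length is $K_1\ll Q$ as before.

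Third, the smoothing order. The tail beyond the dual length comes from repeated integration by parts against $\omega^{(s)}\ll H^{-s}$. This gives a saving $(x_0/(HK^{-1}\cdots))^{-s}$, which is the source of the term $\sqrt M(M/K^2)^{1/(2(s-1))}$. With $Q$ up to $t^{1-\epsilon}$ the saving per step is only a power $t^{\epsilon/2}$ or so, rather than $t^{1/3}$. So $s>4/\epsilon$ is needed to make the remaining tails $O(t^{-2})$.

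The lower bound $\sqrt t\ll Q$ keeps $H=MQ/t\ge Q^2/t\gg1$. This is needed so that the partition has width at least one and $K^2/H\le1$.

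I expect the main obstacle to be this third step: tracking every place in \cite[Section~4]{BMN} where $Q\ll t^{2/3}$ was used implicitly to absorb a second-order Taylor term. In particular, the cubic term in the stationary-phase expansion gives the error $M^{5/2}K^2/H^3$. Here I would redo the expansion to a fixed higher order depending on $\epsilon$, rather than stopping at the second-order term.
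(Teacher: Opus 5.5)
Your overall plan is the paper's: audit the proof of \cite[Propositions 3.1 and 3.2]{BMN}, carry $z^{-1/2}V_t(z/X)$ along in the weight, and check each size condition in the enlarged range. But the audit you sketch puts the conditions in the wrong places, and as written it does not verify the statement. In \cite[Lemma 4.3]{BMN} the Cauchy circle about $x_0$ has radius $cv_jK$, not $\asymp x_0$ or $\asymp HK/v_j$. Your amplitude claim survives, since $z^{-1/2}V_t(z/X)$ is also holomorphic and $O(M^{-1/2})$ on the true discs. On $|y|\le cv_jK$ the quantity $|y|\,(t/x_0)(HK/v_j)/x_0$ is $O(1)$ automatically, because $v_jK\cdot tHK/(v_jM^2)=K^2Ht/M^2=1$. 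The remaining Taylor terms need only the lower bounds $Q\gg t^{1/3}$ and $Q\gg\sqrt t$. On a disc of radius $HK/v_j$ the same quantity is $\asymp H/v_j^2$, which is unbounded, so the argument in your second step fails. The condition $MQ\ll t^2$ is indeed the crux, but it is used elsewhere: in Step~4 of \cite[Section~4.3]{BMN} and at \cite[(4.26)]{BMN}, in the form $M\ll tK$, and in \cite[Section~5]{BMN}, to get $t/\pi\ell y_j\gg1$.

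Your account of why $s>4/\epsilon$ is needed is wrong, and the condition it actually enforces is missing. The smoothing order is not there to make tails $O(t^{-2})$. The tail is the error term $\sqrt M(M/K^2)^{1/(2(s-1))}$, which stays in the conclusion. What needs $s$ large is the inequality at \cite[(4.12)]{BMN}, which must hold for every $\ell$ up to the dual truncation $K_{\mathrm{BMN}}=(M/(v_jK))^{2/(s-1)}Q$. That inequality reduces to $K_{\mathrm{BMN}}M\ll t^2$, hence to $(MQ)^{s/(s-1)}\ll t^2$, and with $MQ\le t^{2-\epsilon/2}$ this is exactly $s\ge4/\epsilon$. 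You must also check:
\begin{itemize}
\item the interface condition $rK_1d^{-2}\le K_{\mathrm{BMN}}$;
\item the condition $H\le M$;
\item the uniformity of the constants claimed in the statement, for example the factors $e^{O(t/Q^3)}$ and $e^{O((k-1)/Q)}$ in \cite[Lemma 4.3]{BMN}.
\end{itemize}
Conversely, your ``main obstacle'' is not one. Enlarging $Q$ only helps the Taylor-type conditions, all of which are lower bounds on $Q$. The error term $M^{5/2}K^2/H^3$ equals $t^3/Q^4$ after division by $\sqrt M$, so it decreases as $Q$ grows, and it is simply retained in the conclusion. Booker, Milinovich and Ng use $Q\ll t^{2/3}$ only after their (3.16), to compare error terms, which is outside Proposition~3.1. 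A higher-order stationary-phase expansion is therefore unnecessary. What has to be checked are the upper-bound conditions on $MQ$ listed above.
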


The proof consists in listing every condition that the two proofs impose on $M$, $Q$, $K$, $H$ or $s$, showing that each holds in \eqref{eq:range}, and showing that no implied constant depends on these parameters.  Proposition 3.2 of \cite{BMN} is not used below, since Section~\ref{sec:endgame} proves the two-height form that is needed; it is included because its proof imposes only (C8).

We begin with three remarks.  First, the parameter is already free, since \cite[Proposition 3.1]{BMN} is stated with $K=\sqrt{M/Q}$ and dual length $K_1\asymp M/K^2=Q$, and their reduction permits any integer $Q\in[2,\sqrt C]$.

Secondly, the restriction $Q\ll t^{2/3}$ enters only in the simplification of the error term after their (3.16), where it shows that the first and third terms are dominated by the second; Proposition \ref{prop:book} shows that in the enlarged range the two terms that arise are individually small enough.

Thirdly, the partition of unity enters their Section~4 only through their (3.6) and (3.7), that is through \eqref{eq:37}, through $v_j\le K$, through $|\operatorname{supp}\omega_j|\ll HK/v_j$ and $\omega_j^{(i)}\ll H^{-i}$, through the identity $H\asymp M^2/(K^2t)$, which the choice of $H'_j$ in Section~\ref{sec:setup} respects, and through the fact that consecutive pieces belong to consecutive fractions, which the Farey count \cite[(4.22)]{BMN} uses.  The frozen partition $\{\omega^{(q)}_j\}$ of Section~\ref{sec:setup} satisfies these at every $t\in I_q$, with implied constants worsened by an absolute factor.

We now list every condition on the parameters that the proofs of Propositions 3.1 and 3.2 invoke, with the place it is used and the reason it holds in \eqref{eq:range}.

\begin{enumerate}
\item[(C1)] $v_jK\le K^2\ll M$, in \cite[Lemma 4.3]{BMN}.  This is $Q\gg1$.
\item[(C2)] $t(v_jK)^3/M^3\ll t/Q^3\ll1$, in the proof of \cite[Lemma 4.3]{BMN}.  This is $Q\gg t^{1/3}$, implied by \eqref{eq:range}.
\item[(C3)] $Q^2\gg t$, stated in the proof of \cite[Lemma 4.3]{BMN} and used to give $v_jK\ll HK/v_j$, and again through $\pi y/2H\ll v_jK/H\ll1$.  This is $Q\gg\sqrt t$, which is the first inequality of \eqref{eq:range}.
\item[(C4)] $H\le M$, used at \cite[(4.13)]{BMN} and again after it.  Since $H=MQ/t$ this is $Q\le t$.
\item[(C5)] $H\ge M/\sqrt t$, a hypothesis of \cite[Lemma 4.2]{BMN}.  This is again $Q\ge\sqrt t$.
\item[(C6)] $M^2/(tK)\ll M$, in Step 4 of \cite[Section~4.3]{BMN}, where it is needed for $x_j^+(\ell/r)-N_{j-1}+H\le M/4$; and $H\ll M/K$, used at \cite[(4.26)]{BMN} through $K^{1/2}H/M\ll K^{-1/2}$.  Both are $M\ll tK$, that is $MQ\ll t^2$, which holds in \eqref{eq:range} because $MQ\le t^{1+\epsilon/2}t^{1-\epsilon}=t^{2-\epsilon/2}$.
\item[(C7)] $q_j^{-1}\sqrt{\ell/r}\,M^{1/2-r_1}\ll t/M^{r_1}$, at \cite[(4.12)]{BMN}, where it is said to follow ``using the facts that $\ell\le K$ and $s\ge6$''.  Here $K_{\mathrm{BMN}}=(M/(v_jK))^{2/(s-1)}Q$ is their dual truncation, not the Farey order of Section~\ref{sec:setup}.  With $q_j,r\ge1$ the condition is $K_{\mathrm{BMN}}M\ll t^2$.  Since $v_jK\ge K=\sqrt{M/Q}$ one has $(M/(v_jK))^{2/(s-1)}\le(MQ)^{1/(s-1)}$, so it suffices that $(MQ)^{s/(s-1)}\ll t^2$, and since $MQ\le t^{2-\epsilon/2}$ this holds when $s>4/\epsilon$, the third clause of \eqref{eq:range}.  This is the one condition that fails at $s=6$ in the enlarged range and requires $s$ to be taken large in terms of $\epsilon$; it costs nothing, since the only appearance of $s$ in the conclusion is through the term $Q^{1/(2(s-1))}$, which decreases with $s$.
\item[(C8)] $M\ll tK$, in \cite[Section~5]{BMN}, in the deduction from $K=\sqrt{M/Q}$ and $M\ll\sqrt C$, where it gives $d^2L\ll rtUV$ and hence $t/\pi\ell y_j\gg1$, which is used twice more in that section.  As in (C6) it follows from $MQ\le t^{2-\epsilon/2}$.
\item[(C9)] $rK_1d^{-2}\le K_{\mathrm{BMN}}$, the interface between Steps 2 and 3 of \cite[Section~4.3]{BMN}, where $K_1\asymp M/K^2=Q$ and $r\le NN^\flat\le N^2$.  Since $M/(v_jK)\ge Q$ it follows from $Q^{2/(s-1)}\gg N^2$, which holds for $t$ large in terms of $N$ and $s$.
\end{enumerate}

  In \cite[Section~4]{BMN} the remaining steps impose nothing further. The Farey count \cite[(4.22)]{BMN} rests on $|I|\asymp t/M\asymp M/(HK^2)$, which is an identity.  The dyadic summations \cite[(4.24)--(4.26)]{BMN} use \cite[(4.23)]{BMN} with exponents that are of the right sign for every $s\ge2$, and the two dominations invoked there, ``since $s\ge6$'' after (4.24) and ``since $H\ll M/K\ll M$'' at (4.25), reduce respectively to $s\ge2$ and to (C6).  The domination of the third term of (4.27) by the first two needs no condition.  The error at the end of Step 4 needs only $H\le M$, which is (C4); and the passage from $E_3(j)$ to $\widetilde E_3(j)$ is the substitution $t=M^2/(K^2H)$ together with $v_j=dq_j$.  In \cite[Section~5]{BMN} nothing is imposed beyond (C8), which is used three times, once to obtain $t/\pi\ell y_j\gg1$ and twice by appeal to that conclusion.

The implied constants in \cite[Lemma 4.3]{BMN} depend on $s$ and, through $e^{O((k-1)/Q)}$ and $e^{O(t/Q^3)}$, on $k$ and on (C2) and the first clause of \eqref{eq:range}.  The constants $C_{r_1},C_{r_2},\widetilde C$ required by \cite[Lemmas 4.1 and 4.2]{BMN} are furnished by \cite[(4.9), (4.12), (4.13)]{BMN}, the first being an identity and the other two having constants absolute once (C7) and (C4) hold.  The bounds for $\sum_{\ell\le x}|\lambda_{\bar f^{\chi}}(\ell)|$ quoted from \cite[Lemma 2.2]{BMN} are uniform in the parameters; and in \cite[Section~5]{BMN} the constants are those of van der Corput's lemmas together with $d\mid N$.  None depends on $M$, $Q$, $K$ or $H$ except through the conditions listed.

Finally the amplitude.  The summand enters \cite[Section~4]{BMN} only through their Lemma 4.3, which bounds the derivatives of $F_j(x)x^{-(k-1)/2}$ by Cauchy's formula on a circle of radius $Y=cv_jK\le M_1/2$ about $x_0$, through the derivative bounds \cite[(4.13)]{BMN} for the amplitude $G(x)$, and through the values of the amplitude at the stationary point and the endpoints in their Lemmas 4.1 and 4.2.

With
\[
  F_j(n)=n^{-\frac12-it}V_t(n/X)e(-\alpha_jn)\omega_j(n)
\]
the function $g(z)=F_j(z)z^{-(k-1)/2}$ acquires the factor $z^{-1/2}V_t(z/X)$, which is holomorphic on the disc $|z-x_0|\le Y$, since $Y\ll K^2\ll M$ puts $z$ in the sector $|\arg z|\ll K^2/M$, and is there $O(M^{-1/2})$ by Lemma \ref{lem:afe}.  Their proof therefore gives
\[
  |\frac{d^s}{dx^s}(F_jx^{-(k-1)/2})|\ll_s(v_jK)^{-s}M^{-1/2}x^{-(k-1)/2},
\]
every amplitude bound in their Section~4 is multiplied by $M^{-1/2}$, and the main term $G(\gamma)$ of their Lemma 4.2 carries the factor $x^\pm_j(\ell/r)^{-1/2}V_t(x^\pm_j(\ell/r)/X)$.

The bounds \cite[(4.13)]{BMN} survive the extra factor, because $\frac{d^i}{dx^i}V_t(x/X)\ll_iM^{-i}\le H^{-i}$ for $x\asymp M$ by Cauchy's estimate on a disc of radius $\asymp M$ in the sector.  This proves Proposition \ref{prop:ext}.

In the range $M\ll\sqrt N\,t$, $Q\ll t^{2/3}$ of \cite{BMN}, who justify (C7) only as following ``after some calculation'' from $\ell\le K$ and $s\ge6$, the condition $(MQ)^{s/(s-1)}\ll t^2$ reads $(\sqrt N\,t^{5/3})^{s/(s-1)}\ll t^2$, which holds for $s\ge6$, with a constant polynomial in $N$, and fails for $s<6$.  Their hypothesis $s\ge6$ is therefore exactly what the calculation requires, and the constraint on $s$ becomes stronger only when $M$ and $Q$ are both allowed to approach $t$.

Conditions (C1) to (C7) and (C9) are all of \cite[Section~4]{BMN}.  (C8) is all of \cite[Section~5]{BMN}.  The assumption $M\ll\sqrt C$ appears in those two sections only in (C6) and (C8), and only as a route to $M\ll tK$.  The assumption $Q\ll t^{2/3}$ appears in them only through its consequence $M\ll tK$, and the lower bound $Q\gg(t/\log t)^{2/3}$ only in \cite[Lemma 4.3]{BMN}, to make $e^{O((k-1)/Q)}$ bounded, for which $Q\gg\sqrt t$ also suffices.

The lower bounds on $Q$ in (C1)--(C3) and (C5) become easier as $Q$ increases. The upper bounds on $MQ$ in (C6)--(C8), and the interface condition (C9), still have to be checked; the range \eqref{eq:range} and the choice of $s$ above provide exactly these checks.

\section{Dependence on weight and level}\label{sec:uniform}\label{rem:uniform}
The main theorem fixes $f$. This appendix records the available bounds when its weight and level vary.

The following bound holds with absolute implied constants and no restriction on $N$.  Let $k\le T$.  For $T\le t\le2T$ one has $C(f,t)\asymp NT^2$, and Lemma \ref{lem:afe} with $X=\sqrt C$ together with Deligne's bound gives
\[
  |L(\half+it,f)|\ll C^{1/4}\log C.
\]
The proof of Theorem \ref{thm:second}, with $\sum_{x<n\le2x}d(n)^2\ll x(\log x)^3$ in place of the Rankin--Selberg bound and with the sum over $\nu$ replaced by an integral, gives
\[
  \int_T^{2T}|L(\half+it,f)|^2\dd t\ll(T+\sqrt C)(\log NT)^{B}
\]
with $B$ absolute.  Hence
\[
  \int_T^{2T}\bigl|L(\half+it,f)\bigr|^6\dd t\ \ll\ N^{3/2}T^3(\log NT)^{B+4}.
\]

The proof of Theorem \ref{thm:main} uses the size of $k$ and $N$ only in the following places.  The first is the truncation of the blocks at $\sqrt C\,T^{\delta/20}\le T^{1+\delta/16}$ in \eqref{eq:alt2}.  The second is the conditions (C6) and (C8), that is $MQ\ll t^2$ for every block, where for large $N$ the bounds \eqref{eq:R1} and \eqref{eq:R2} carry an extra factor $\sqrt N$ and so enlarge $Q$.  The third is (C9), which with $s\asymp1/\delta$ needs $N^2\ll T^{c\delta}$.  The fourth is the standing assumption $t\ge k^{3/2}\log k$ of \cite{BMN} and the asymptotic expansion of $J_{k-1}$ in their Step 2.  The last are the $O(N^{2+\epsilon})$ choices of $(\beta,r,\chi)$ in Section~\ref{sec:endgame} and the constants $d,r\le N^2$ in Lemmas \ref{lem:deriv} and \ref{lem:sep} and Proposition \ref{prop:sieve}, which are absorbed into powers $T^{c\delta}$.  Each of these is harmless when $kN\le T^{c\epsilon}$ for a sufficiently small absolute $c$.

Booker, Milinovich and Ng state that the implied constants in their argument are polynomial in $k$ and $N$ when no logarithmic saving is taken, and the remaining constants above are visibly polynomial.  Granting this, the proof gives
\[
  \int_T^{2T}|L(\half+it,f)|^6\dd t\ll_\epsilon(kN)^{A_0}T^{2+\epsilon}
\]
for $kN\le T^{c\epsilon}$, with $A_0$ absolute.  We have not written out these constants and do not state this as a theorem.

When $kN>T^{c\epsilon}$ one has $T<(kN)^{1/c\epsilon}$, and the bound displayed above is $\ll_\epsilon(kN)^{3/2+1/c\epsilon+\epsilon}T^{2+\epsilon}$.  For $T<k$ convexity alone gives
\[
  \int_0^T|L(\half+it,f)|^6\dd t\ll N^{3/2}k^4(\log Nk)^6.
\]
Subject to the proviso of the previous paragraph, therefore,
\[
  \int_0^T|L(\half+it,f)|^6\dd t\ll_\epsilon(kN)^{A(\epsilon)}T^{2+\epsilon}
\]
for all $k$ and $N$, with $A(\epsilon)=\max(A_0,\tfrac32+\tfrac1{c\epsilon})+1$.  This is polynomial in the level for each fixed $\epsilon$, but the exponent grows like $1/\epsilon$, and it is not a hybrid bound.

The obstruction to a hybrid bound lies in the structure of the argument.  The first sum of the approximate functional equation has length $\sqrt C\asymp\sqrt N\,T$, and moving $X$ only exchanges it with the second.  The truncation $Q=T^{2/3}R^{2/3}$ is fixed by the balance in the proof of Proposition \ref{prop:final}, which is Jutila's, and may approach $T$.

For a block at scale $M$ the piece of the partition belonging to a fraction of denominator $v$ has length $\asymp M^2/tvR$ with $R=\sqrt{M/Q}$, and once $M\gg tvR$ it is longer than the block.  For $v=1$ this is the failure of $MQ\ll t^2$, and it occurs for some blocks as soon as $N\gg T^{c\epsilon}$.  A bound
\[
  \int_T^{2T}|L(\half+it,f)|^6\dd t\ll N^{A}T^{2+\epsilon}
\]
with $A$ independent of $\epsilon$ would need a different treatment of the blocks with $T^{1+\epsilon}\le M\le\sqrt N\,T$.

\section*{Acknowledgements}

The author gratefully acknowledges support from the Heilbronn Institute for Mathematical Research.

\end{document}